\documentclass{amsart}

\usepackage{amsfonts, amsmath,amscd, amssymb, latexsym, mathrsfs, stmaryrd, verbatim, wasysym }

\usepackage{amssymb}

\usepackage{hyperref}
\usepackage[dvips]{graphicx}
\usepackage{epsfig}
\usepackage{psfrag}

\usepackage[colorinlistoftodos]{todonotes}
\usepackage[english]{babel}
\usepackage[utf8x]{inputenc}
\usepackage[all]{xy}
\usepackage{amscd}

\usepackage{appendix}
\usepackage{amsthm}

\usepackage{appendix}
\setlength{\marginparwidth}{1.12in}
\usepackage{mathtools}

\usepackage[dvips]{graphicx}
\usepackage{epsfig}
\usepackage{psfrag}
\usepackage{chngcntr}
\usepackage{subfigure}
\usepackage{captcont}
\usepackage{dsfont}

\usepackage{color}
\definecolor{darkgreen}{cmyk}{1,0,1,.2}
\definecolor{m}{rgb}{1,0.1,1}
\definecolor{green}{cmyk}{1,0,1,0}
\definecolor{darkred}{rgb}{0.55, 0.0, 0.0}
\definecolor{test}{rgb}{1,0,0}
\definecolor{cmyk}{cmyk}{0,1,1,0}



%
%

\newcounter{diagram}
\numberwithin{diagram}{section}
\numberwithin{equation}{section}

\newtheorem{Equation}{}[section]

\newtheorem{theorem}[Equation]{Theorem}
\newtheorem{proposition}[Equation]{Proposition}
\newtheorem{lemma}[Equation]{Lemma}
\newtheorem{corollary}[Equation]{Corollary}

\newtheorem{definition}[Equation]{Definition}

\newtheorem{remark}[Equation]{Remark}

\def\Coker{\operatorname{Coker}}

\def\Dom{\operatorname{Dom}}

\def\ind{\operatorname{ind}}
\def\End{\operatorname{End}}

\def\red{\operatorname{red}}
\def\reg{\operatorname{reg}}
\def\av{\operatorname{av}}

\def\Im{\operatorname{Im}}
\def\Ind{\operatorname{Ind}}

\def\Ker{\operatorname{Ker}}

\def\Proj{\operatorname{Proj}}
\def\Sgn{\operatorname{Sgn}}
\def\Sign{\operatorname{Sign}}

\def\Tr{\operatorname{Tr}}
\def\tr{\operatorname{tr}}

\def\Range{\operatorname{Range}}

\def\Ind{\operatorname{Ind}}
\def\ind{\operatorname{ind}}
\def\End{\operatorname{End}}
\def\maA{\mathcal{A}}
\def\maB{\mathcal{B}}
\def\maC{\mathcal{C}}
\def\maD{\mathcal{D}}
\def\maE{\mathcal{E}}
\def\maH{\mathcal{H}}
\def\maK{\mathcal{K}}

\def\maJ{\mathcal{J}}
\def\maL{\mathcal{L}}
\def\maM{\mathcal{M}}
\def\maN{\mathcal{N}}

\def\maP{\mathcal{P}}

\def\maF{\mathcal{F}}
\def\del{\partial}
\def\bJ{\mathrm{J}}
\def\bB{\mathrm{B}}

\def\C{\mathbb C}

\def\R{\mathbb R}
\def\S{\mathbb S}
\def\Z{\mathbb Z}
\def\N{\mathbb N}

\def\maA{{\mathcal A}}
\def\maB{{\mathcal B}}
\def\maC{{\mathcal C}}
\def\maE{{\mathcal E}}
\def\maF{{\mathcal F}}
\def\maM{{\mathcal M}}
\def\maQ{{\mathcal Q}}

\def\maH{{\mathcal H}}

\def\maJ{{\mathcal J}}
\def\maN{{\mathcal N}}
\def\maU{{\mathcal U}}

\def\maS{{\mathcal S}}

\def\tb{{\widetilde b}}

\def\tv{{\widetilde v}}
\def\tV{{\widetilde V}}
\def\tN{{\widetilde N}}
\def\tM{{\widetilde M}}
\def\tG{{\widetilde G}}

\def\tD{\widetilde{D}}

\def\tM{\widetilde{M}}

\def\tX{{\widetilde X}}
\def\tY{{\widetilde Y}}
\def\tS{{\widetilde S}}
\def\tU{\widetilde{U}}
\def\tQ{\tilde{Q}}
\def\tB{\tilde{B}}

\def\tP{\tilde{P}}
\def\tp{\tilde{p}}
\def\tJ{\tilde{J}}
\def\tK{\tilde{K}}
\def\tF{\tilde{F}}
\def\tW{\tilde{W}}
\def\tC{\tilde{C}}
\def\tE{\tilde{E}}

\def\hf{\hat{f}}
\def\hS{\hat{S}}
\def\hD{\hat{D}}
\def\hM{\hat{M}}

\def\hV{\hat{V}}
\def\hQ{\hat{Q}}
\def\hF{\hat{F}}
\def\hP{\hat{P}}

\def\tu{\widetilde{ u}}

\def\thS{\widetilde{\hat S}}
\def\thD{\widetilde{\hat D}}
\def\thM{\widetilde{\hat M}}
\def\thV{\widetilde{\hat V}}
\def\thQ{\widetilde{\hat Q}}
\def\thF{\widetilde{\hat F}}
\def\thP{\widetilde{\hat P}}

\def\pa{\partial}

\def\ep{\epsilon}


\marginparwidth 0pt
\oddsidemargin  0pt
\evensidemargin  0pt
\marginparsep 0pt
\topmargin   0pt
\textwidth 6.5 in \textheight 8.5 in
 \parindent 0pt

\begin{document}

\title{The Higson-Roe exact sequence and $\ell^2$  eta invariants}

\author[M-T. Benameur]{Moulay-Tahar Benameur}
\address{I3M, UMR 5149 du CNRS, Montpellier, France}
\email{moulay.benameur@univ-montp2.fr}

\author[I. Roy]{Indrava Roy}
\address{Sapienza Universita di Roma, Italy}
\email{indrava@gmail.com}

\begin{abstract}
The goal of this paper is to solve the problem of existence of an $\ell^2$ relative eta morphism on the Higson-Roe structure group. Using the Cheeger-Gromov $\ell^2$ eta invariant, we construct a group morphism from the Higson-Roe maximal structure group constructed in \cite{HigsonRoe2010} to the reals. When we apply this morphism to the structure class associated with the spin Dirac operator for a metric of positive scalar curvature, we get the spin $\ell^2$ rho invariant. When we apply this  morphism to the  structure class associated with an oriented homotopy equivalence, we get the difference of the $\ell^2$ rho invariants of the corresponding signature operators. We thus get new proofs for the classical $\ell^2$ rigidity theorems of Keswani obtained in \cite{KeswaniVN}.
\end{abstract}

\date{\today}

\maketitle
\tableofcontents

\section{Introduction}

The eta invariant of elliptic operators first appeared in \cite{APS1} as a boundary correction term appearing in the calculation of the index of a Fredholm operator associated with a global boundary value problem on even dimensional manifolds with boundary. The eta invariant $\eta (D)$ is a measure of asymmetry of the spectrum of the operator $D$ and turns out to be well-defined for any elliptic self-adjoint differential operators $D$ on a closed odd dimensional manifold $M$.  This is a  sensitive invariant, but there is a relative version which is more stable and often has interesting topological properties.  More precisely, given two group morphisms  $\sigma_1, \sigma_2: \pi_1(M)\to U(N)$ and the associated flat bundles $E_{\sigma_i}$, we may form the twisted elliptic differential operators $D\otimes E_{\sigma_i}$ and the relative eta invariant  is by definition \cite{APS2, APS3}
$$
\rho_{\sigma_1, \sigma_2} (D) := \eta (D\otimes E_{\sigma_1}) - \eta (D\otimes E_{\sigma_2}).
$$
If $D$ is for instance the signature operator on $M$ then it was proved by Atiyah, Patodi and Singer that $\rho_{\sigma_1, \sigma_2} (D)$ is a differential invariant of $M$. This property had important consequences, as when $\pi_1(M)$  has torsion  this invariant is not a homotopy invariant, see for instance \cite{Weinberger, ChangWeinberger1, ChangWeinberger2, Mathai, PiazzaSchick}. Notice that the relative index is zero thanks to the Atiyah-Singer index formula and the relative eta invariant can  thus be seen as a refined {\em secondary} invariant, in fact some transgression of the index \cite{CheegerSimons, Lott}. In general, when reduced modulo $\Z$,  this invariant becomes more computable and inherits topological properties,  there is indeed a topological index formula  in $\R/\Z$ which expresses it in terms of characteristic classes \cite{APS3}.

In \cite{CheegerGromov}, Cheeger and Gromov extended the APS eta invariant and introduced an $\ell^2$ version of the eta invariant exactly as Atiyah introduced an $\ell^2$ version of the index. More precisely, given a Galois $\Gamma$-covering $\tM\to M$ and a $\Gamma$-invariant generalized Dirac operator  $\tD$ over $\tM$, the Cheeger-Gromov eta invariant is defined by the absolutely convergent integral \cite{CheegerGromov}:
$$
\eta_{(2)} (\tD) := \int_0^\infty \tau
 (\tD e^{-t{\tD}^2})\,dt/{\sqrt {\pi t}}.
$$
The operator $\tD$ induces a generalized Dirac  operator $D$ over $M$ and the Cheeger-Gromov rho invariant (also called $\ell^2$ relative eta invariant) is given by:
$$
\rho_{(2)} (\tD) := \eta_{(2)} (\tD) - \eta (D).
$$
Again, while the $\ell^2$ index coincides with the usual index by Atiyah's theorem \cite{AtiyahCovering}, the Cheeger-Gromov $\ell^2$ relative eta invariant is in general non trivial, and provides interesting geometric invariants. For the signature operator, Cheeger and Gromov proved that it is a differential invariant and this was again used to distinguish homotopy invariant non diffeomorphic manifolds \cite{ChangWeinberger1, ChangWeinberger2}. There is another important scope of applications of these invariants to the study of the moduli spaces of metrics of positive scalar curvatures on spin manifolds, we refer for instance to \cite{PiazzaSchickI,PiazzaSchick} for more details and explanations.\\

The goal of this paper is to explore the relation between the rigidity theorems of the $\ell^2$ relative eta invariant of Cheeger-Gromov and the recently obtained Higson-Roe exact sequence. This will hopefully  improve the understanding of the $\ell^2$ eta invariants and its close relation with the different assembly maps in $K$-theory as well as in $L$-theory \cite{KeswaniAPS, Weinberger}.

Given a discrete countable group $\Gamma$, the Kasparov assembly map for the group $\Gamma$ was constructed by G. Kasparov, in his $K$-theory approach to the Novikov conjecture,  using the higher index of elliptic operators    \cite{KasparovRussian}. This is a group morphism $\mu_{\red, \Gamma}$ from the analytic $K$-homology of the classifying space $B\Gamma$ of $\Gamma$, to the $K$-theory of the reduced $C^*$-algebra $C_{\red}^*\Gamma$ of $\Gamma$, which factors through a similar assembly map $\mu_\Gamma$ with values in the $K$-theory of the full $C^*$-algebra $C^*\Gamma$:
$$
\mu_{\red, \Gamma}\; : \;K_* (B\Gamma) \stackrel{\mu_\Gamma}{\longrightarrow} K_* (C^*\Gamma) \longrightarrow K_*(C^*_{\red}\Gamma).
$$
From the early works, it was expected that for a large class of torsion free groups $\Gamma$, the map $\mu_{\red, \Gamma}$ should be an isomorphism. Notice that for $K$-amenable groups (which includes amenable and even a-T-menable groups \cite{HigsonKasparov}) the maps $\mu_{\red, \Gamma}$ and $\mu_{ \Gamma}$ coincide, but in general they are different as can be easily seen for property (T) groups \cite{Julg, Valette}.  On the other hand, since the APS relative eta invariant \cite{APS1, APS3} is built out of the representation theory of $\Gamma$, it can  naturally  be related with  maximal assembly maps.

When the group $\Gamma$ has torsion, the assembly maps $\mu_{\red,\Gamma}$ and $\mu_{\Gamma}$ are  not isomorphisms in general. Roughly speaking, one needs to add the higher indices for proper (non-free) actions. There is indeed a more elaborate assembly map, which was constructed by P. Baum and A. Connes \cite{BaumConnes} (see also the important subsequent paper \cite{BaumConnesHigson}) and which  replaces the analytic $K$-homology of the classifying space (the LHS) by a more refined $K$-homology group associated with a classifying space for all proper actions of $\Gamma$.  This Baum-Connes assembly map is conjectured to always be an isomorphism, see for instance the monographs \cite{Julg, Valette} or the more recent overview paper \cite{Schick}.

In his PhD thesis, see \cite{KeswaniAPS, KeswaniControlledPaths}, N. Keswani first used the  isomorphism assumption of the maximal Baum-Connes assembly map to obtain some rigidity properties of the relative APS eta invariant. Because the group was assumed to be torsion free, Keswani actually used the Kasparov maximal assembly map and this was conceptualized  by Higson and Roe in \cite{HigsonRoe2010}, who obtained a clearer relation between the relative APS eta invariant and  the Kasparov maximal assembly map, precisely through their maximal structure exact sequence. Recently similar results were obtained by Deeley-Goffeng \cite{DeeleyGoffeng1, DeeleyGoffeng2} using geometric models of the analytic structure groups of Higson and Roe. While the $C^*$-algebra $K$-theory approach of Keswani allowed him to also deduce the similar rigidity results for the $\ell^2$ relative eta invariant of Cheeger-Gromov \cite{CheegerGromov}, only the APS relative eta invariant was treated in \cite{HigsonRoe2010}. It is one of the goals of the present paper to show that the Higson-Roe exact sequence \cite{HigsonRoe} can be used some steps further  to encompass the Cheeger-Gromov invariant. In the process, we obtained  independent semi-finite results which will be used in a forthcoming paper to deduce similar properties for foliated rho invariants \cite{BenameurPiazza}.  \\

We proceed now to explain more precisely the results of the present paper.
Recall  that the analytic $K$-homology group of $B\Gamma$ is  isomorphic through the so-called Paschke-duality \cite{Paschke, Higson} to an inductive limit of $K$-theory groups of  finite propagation  Calkin algebras on some geometric Hilbert modules, see section \ref{HRanalytic}. It is also well known that  the $C^*$-algebra $C^*\Gamma$ is Morita equivalent to the $C^*$-algebra of compact operators on the same Hilbert modules.  Using these Hilbert modules and the above mentioned identifications, the Higson-Roe exact sequence was obtained in \cite{HigsonRoe} and can be stated as an exact triangle in which the Kasparov assembly map appears  as a boundary map, see the first of the following diagrams:

\begin{displaymath}\label{Figure1}
\xymatrixcolsep{1pc}\xymatrix{
K_{[*]}(B\Gamma) \ar[rr]^{\mu^{\red}_\Gamma}  &  & K_{[*]}(C_{\red}^*\Gamma)  \ar[dl]^{}\\ & \maS_{\red, [*+1]} (\Gamma) \ar[ul]_{}  }
\quad
\xymatrixcolsep{1pc}\xymatrix{
K_{[*]}(B\Gamma) \ar[rr]^{\mu_\Gamma}  &  & K_{[*]}(C^*\Gamma)  \ar[dl]^{}\\ & \maS_{[*+1]} (\Gamma) \ar[ul]_{}
}\\
\end{displaymath}
\begin{center}
Higson-Roe exact sequences
\end{center}

This gives a hint of the obstruction structure groups $\maS_{\red, *}(\Gamma)$ for the assembly  maps to be  isomorphisms, and they are thus expected, according to the Baum-Connes conjecture, to be trivial  when $\Gamma$ is torsion free.
 In \cite{HigsonRoe}, Higson and Roe actually constructed a commutative diagram from the classical surgery exact sequence  \cite{Wall} to the above analytic sequence, viewed as a long exact sequence, encompassing homotopy invariance properties of  signature operators.
There is a similar exact sequence associated with the maximal $C^*$-algebras which  was obtained in \cite{HigsonRoe2010} and which can be stated as the periodic $6$-term  exact triangle which is the second triangle above.

As explained above, this second diagram is  adapted  to the representation theory of the group $\Gamma$. The structure group $\maS_1(\Gamma)$ was indeed  intensively used in \cite{HigsonRoe2010} as a receptacle for  higher structure invariants, and they  deduced the rigidity  theorems of Keswani  about the APS relative eta invariant \cite{KeswaniAPS}. In \cite{BenameurMathaiPSC, BenameurMathaiSignature, BenameurMathaiSS}, the authors used again this exact sequence and extended the above results so as to obtain explicit connections with the APS spectral flow. \\

We show here that the Higson-Roe exact sequence, can  be used to also deduce the deep rigidity results about semi-finite spectral invariants. Depending on the geometric situation, one needs to introduce the appropriate exact sequence modifying, for foliations for instance \cite{BenameurPiazza},  the Higson-Roe sequence. We postpone this discussion and we only concentrate here on the Cheeger-Gromov relative eta invariant for Galois coverings \cite{CheegerGromov}. We  show more precisely  that  there is an $\ell^2$  structure group $\maS_*^{(2)}(\Gamma)$ which is a natural receptacle for a higher Cheeger-Gromov relative $\ell^2$ eta invariant. The group $\maS_1^{(2)}(\Gamma)$ is introduced as an inductive limit of $K$-theory groups of appropriate $C^*$-algebras but now the $C^*$-algebras  are associated with  semi-finite von Neumann algebras of $\Gamma$-invariant  operators on Galois $\Gamma$-coverings \cite{AtiyahCovering}.  These semi-finite structure groups fit into the following commutative diagram which is explained later:

\vspace{0,2cm}
\begin{equation*}\label{beta}
\hspace{-0,47cm}
\begin{CD}
K_0(B\Gamma) @>{\mu_{\Gamma}}>>  K_0(C^*\Gamma) @>>>  \maS_{1}(\Gamma) @>>> K_1(B\Gamma)  @>{\mu_{\Gamma}}>> K_1(C^*\Gamma) \\
@V{=}VV  @V{(\tau_*,\Tr_*)\circ\alpha_*}VV                          @V{\alpha_*}VV       @V{=}VV                           \\
 K_0(B\Gamma) @>{\partial}>>  \R\oplus \Z @>>>  \maS_{1}^{(2)}(\Gamma) @>>> K_1(B\Gamma)  @>>> 0  \\
\end{CD}
\end{equation*}
\vspace{0,2cm}

The group morphism $
\alpha_*: \maS_*(\Gamma) \longrightarrow \maS_*^{(2)}(\Gamma)$ is defined by combining
 the regular and trivial representations of  $\Gamma$ (see Section 3.3 for the precise definition of the morphism $\alpha_*$).
Even if the definition of the structure groups $\maS_*^{(2)} (\Gamma)$ is valid for $*=0,1$, only the group $\maS_1^{(2)}(\Gamma)$ will be used here. We show that it enters in a short exact sequence
$$
0\to \R \longrightarrow \maS_1^{(2)}(\Gamma)\longrightarrow K_1(B\Gamma) \to 0,
$$
and that it is a home, under appropriate assumptions, for the Cheeger-Gromov $\ell^2$ invariants. It is worth pointing out that the kernel of the morphism  $\ind: K_0(B\Gamma)\to \Z$ induced by the index map can be shown a fortiori, using Atiyah's $\ell^2$ index theorem \cite{AtiyahCovering}, to coincide with the group $\maS_0^{(2)}(\Gamma)$.

The second part of our study is devoted to the geometric picture of our structure group. More precisely, we introduce a geometric version of the $\ell^2$ structure group that we call $\maS_{1}^{(2), geo}  (\Gamma)$ using cycles \`a la Baum-Douglas together with some choices and moding out by moves similar to the Higson-Roe ones except that we need to apply the $\ell^2$ index theorem for coverings with boundary, as proved by Ramachandran in \cite{Ramachandran}. We show  that there is   a well defined group morphism
$$
\xi: \maS_{1}^{(2), geo}  (\Gamma)\longrightarrow \R
$$
which allows to recover the Cheeger-Gromov relative $\ell^2$ invariant in the interesting geometric situations. Moreover, we prove that the geometric and analytic $\ell^2$ structure groups are in fact isomorphic. Associated with any $\ell^2$ geometric cycle, there is an obvious analytic class in $\maS_1^{(2)}(\Gamma)$. The following is one of the main results in this paper:\\

{\bf{Theorem }}\ref{analyticGeometric}
{\em {The analytic class of  a geometric cycle only depends on its class in the $\ell^2$ geometric group $\maS_1^{(2), geo}(\Gamma)$ and hence induces a well defined group morphism
$$
\maS_1^{(2), geo}(\Gamma) \longrightarrow \maS_1^{(2)}(\Gamma).
$$
}}
The proof  occupies an important part of the paper, due to the continuity of the involved spectra of operators and to the use of some deep  results on Boundary Value Problems \cite{BW} that we had to extend to Galois coverings. As a corollary, we thus eventually succeeded to construct the allowed $\ell^2$ group morphism
$$
\xi_{(2)} : \maS_1 (\Gamma) \longrightarrow \R.
$$
In order to explain the new issues created by the semi-finite situation, we point out that if $\Lambda$ is the subgroup of $\R$ which is the image of the  $K$-theory group of $C^*_{\red}\Gamma$ under the additive map induced by evaluation at the unit, then there is, a priori and in general, no well defined morphism from $K_1(B\Gamma)$ to $\R/\Lambda$ which would be compatible with our morphism $\xi_{(2)}$. Notice that $\Z\subset \Lambda$ and we have equality when the  Baum-Connes map is surjective.

The usual corollaries regarding obstructions to the existence of metrics with positive scalar curvature or regarding the homotopy invariance of the Cheeger-Gromov $\ell^2$ relative eta invariant are deduced using a construction similar to the one described in \cite{HigsonRoe2010}. We get more precisely a new proof of the following two theorems of Keswani:\\

{\bf{Theorem }}\ref{PSC} \cite{KeswaniVN}
{\em {Assume that $M$ is  a closed odd dimensional spin manifold which has a metric of positive scalar curvature and let $f:M\to B\Gamma$ be a classifying map for the $\Gamma$-cover $\tM\to M$. Assume that the assembly map $\mu_{\Gamma}$  is an isomorphism, then the Cheeger-Gromov rho invariant of the spin Dirac operator vanishes.}}\\

{\bf{Theorem }}\ref{Homotopy} \cite{KeswaniVN}
{\em{Let $M,M'$ be closed odd-dimensional manifolds equipped with maps $f:M\rightarrow B\Gamma$, $f':M'\rightarrow B\Gamma$, and let $h:M\rightarrow M'$ be an oriented  homotopy equivalence which is compatible with the maps $f$ and $f'$. Assume also that the assembly map $\mu_\Gamma$  is an isomorphism. Then  the Cheeger-Gromov rho-invariants  $\rho_{(2)}(f,D)$ and $\rho_{(2)}(f',D')$ associated with the odd signature operators $D$ and $D'$ on $M$ and $M'$ respectively do coincide. }}\\

Once we have constructed the morphism $\xi$, the proof of Theorem \ref{PSC} is a rephrasing of the Higson-Roe proof which immediately extends to our semi-finite situation, and uses  the previous theorem about the equivalence of analytic and geometric semi-finite structure groups.
On the other hand, we point out that in the proof of the second theorem  \ref{Homotopy} we use the APS projection $\chi_{\geq} (D)$. This is dictated by the BVP results used from \cite{BW} and by the  APS formulae from \cite{APS1}. The resulting minor difference from the conventions of \cite{HigsonRoe2010} is then easily adjusted by  modifying accordingly the class  associated with an oriented homotopy equivalence in our $\ell^2$ structure group (see Section \ref{SectionHomotopy})  and by introducing a sign change in the definition of the opposite of a cycle.\\


Let us describe more precisely the contents of each section. We have devoted Section \ref{Background} to a brief review of some results on Hilbert modules associated with Galois coverings that will be used later on. In Section \ref{analytic}, we first review the maximal Higson-Roe exact sequence, then we introduce our $\ell^2$ structure group $\maS_*^{(2)} (\Gamma)$.  In the end of Section \ref{analytic}, we show our short exact sequences and their compatibility with the Higson-Roe exact sequence, by using the first appendix. Section \ref{geometric} is the heart of the paper and is divided into  subsections. In Subsection \ref{Structure}, we define  our geometric $\ell^2$ structure group $\maS_1^{geo, (2)} (\Gamma)$. In Subsection \ref{geometricanalytic}, the main theorem \ref{Isomorphism} is proved. We use here many results on BVP for Galois coverings which are stated in the second and third appendices. In Section \ref{RhoMorphism}, we show that the Cheeger-Gromov $\ell^2$ relative eta invariant allows to define a group morphism from $\maS_1^{geo, (2)} (\Gamma)$ to the reals. The last Section \ref{Applications} is devoted to the rigidity corollaries of Keswani that we deduce from our results. We end the paper with three appendices which have independent interest. Appendix \ref{Compact} reviews a folklore result on the $K$-theory of the $\tau
$ compact operators on Galois coverings. Appendix \ref{Resolvent} proves a semi-finite version of a classical result on compactness of resolvents. Finally the last Appendix \ref{BVP} explains how to extend some classical BVP to Galois coverings, in particular we deduce the $L^2$-invertibility of the double Dirac operator and review the properties of the Calderon projectors in this semi-finite setting. Here the recent results of \cite{XieYu} were  useful.\\

{\em{Acknowledgements.}} The authors benefited from discussions with many colleagues and would like to express their gratitude to all of them. In particular, they are grateful to P. Albin, P. Antonini, S. Azzali, T. Fack, N. Higson, V. Mathai,  P. Piazza,  G. Skandalis and G. Yu. The authors are also indebted to the referee for many helpful suggestions. Part of this work was done while the first author was visiting the mathematics department at La Sapienza-Roma and he would like to thank the members for the warm hospitality and the research network GDRE France-Italy of Noncommutative Geometry (GDRE GREFI-GENCO) for the financial support.  The second author is supported by the INdAM Cofund Marie Curie fellowship ''PCOFUND-GA-2009-245492'' and hosted at the Department of Mathematics at La Sapienza University, Rome, and he would also like to thank the I3M institute at Montpellier for the hospitality.

\section{Background on Hilbert modules for coverings}\label{Background}
We review in this section some classical constructions of Hilbert modules over Galois coverings which will be used in the sequel. Since all the construction are classical, we shall be brief and only give the main ideas. For the basic  theory of Hilbert modules, see for instance \cite{Lance}.

We assume in the whole paper that the group $\Gamma$ is a  {\underline{countable infinite}} discrete group. The full group $C^*$-algebra of  $\Gamma$, that is the maximal completion $C^*$-algebra, is denoted as usual  $C^*\Gamma$. There are two representations of $C^*\Gamma$ that will be mainly used in the present paper, the regular representation $\pi_{\reg}$ in the $\ell^2$ Hilbert space of $\Gamma$, and the trivial representation $\pi_{\av}$ in the complex numbers $\C$.

Let $X$ be a compact space (a finite CW complex) and  let $\tilde{X}\xrightarrow{\Gamma} X$ be a Galois covering over $X$. All the results of this section apply to locally compact $X$ replacing in all constructions the continuous sections by the compactly supported sections before passing to completions, but this will not be needed in the present paper. It will also be important in the next sections to sometimes assume that our spaces are manifolds, this will be emphasized explicitly.  We fix a Borel measure on $X$ and its lift to $\Gamma$-invariant Borel measure on $\tX$.   We denote by $\Xi_\Gamma$ the Mishchenko flat bundle of line $C^*\Gamma$-modules  whose total space is $\Xi_\Gamma= \tX\times_\Gamma C^*\Gamma$, the quotient of the Cartesian product $\tX\times C^*\Gamma$ under the right action of $\Gamma$
$$
(\tilde{x}, T) \gamma := (\tilde{x} \gamma, \delta_{\gamma^{-1}} T).
$$
For $\alpha\in \Gamma$, $\delta_\alpha$ denotes the characteristic function of $\{\alpha\}$ viewed as an element of $C^*\Gamma$.
The space of continuous sections of the bundle $\Xi_\Gamma$ over $X$ can be identified with the space $C(\tilde{X},C^*\Gamma)^\Gamma$ of $\Gamma$-invariant elements of the space $C(\tX, C^*\Gamma)$ of continuous functions from $\tX$ to $C^*\Gamma$. The left action of $\Gamma$ on the space $C(\tilde{X},C^*\Gamma)$ is  given, for $\xi=\sum_g \xi_g\otimes \delta_g\in C(\tilde{X}, \C\Gamma)$ with compactly supported $\xi_g=g\xi_e$ ($e$ being the neutral element of $\Gamma$),  by:
$$
\alpha\cdot \xi = \sum_g \alpha \xi_g\otimes \delta_{\alpha g}, \quad\text{ where }(\alpha \xi_g)(\tilde{x})=\xi_g(\tilde{x}\alpha).
$$
It is then known that $C(\tilde{X},C^*\Gamma)^\Gamma$ yields a right Hilbert module (actually a finitely generated projective module)  $\maE_{X, \Gamma}$ over  the $C^*$-algebra $C(X)\otimes C^*\Gamma$. More precisely,  for $\xi,\eta$ in the dense subspace $ C(\tilde{X},\C\Gamma)^\Gamma$, the inner product is given by:
\begin{eqnarray*}
\label{innprod1}
<\xi,\eta>(x,g)=\sum_{[\tilde{x}]=x} <\xi(\tilde{x},e),\eta(\tilde{x},g)>
\end{eqnarray*}
where $\xi(\tilde{x},g):= \xi_g(\tilde{x})$ for an expansion $\xi=\sum_{g\in \Gamma} \xi_g \delta_g$, with $\xi_g\in C_c(\tilde{X})$ as above.

The module structure is given for $ f\otimes\delta_\alpha \in C(X)\otimes \C\Gamma$ by:
\begin{eqnarray*}
\label{mod1}
\xi\star(f\otimes \delta_\alpha)(\tilde{x})= f([\tilde{x}])\sum_{g\in\Gamma}\xi_g(\tilde{x})\delta_{g\alpha}.
\end{eqnarray*}
 It is easy to check that these rules define  the Hilbert module structure of $\maE_{X, \Gamma}$.

We denote by $S$ a fixed (topological) hermitian bundle over $X$ and by $\tS$ its  lift to a $\Gamma$-equivariant hermitian vector bundle over $\tX$. The multiplication action of $C(X)$ on the Hilbert space $L^2(X, S)$ extends to the left action of the $C^*$-algebra $C(X)\otimes C^*\Gamma$ on the Hilbert right $C^*\Gamma$-module $L^2(X,S)\otimes C^*\Gamma$.
We now similarly review the structures of the Hilbert module, denoted in the present paper by $\maE_{S, \Gamma}$,  of $L^2$-sections of the bundle $S\otimes \Xi_\Gamma$. For $\xi,\eta \in C(\tilde{X},\tilde{S}\otimes \C\Gamma)^\Gamma$ we define the inner-product by:
\begin{eqnarray*}
\label{innprod2}
<\xi,\eta>(g)=\int_{F} \sum_{\alpha\in \Gamma}<\xi(\tilde{x},\alpha),\eta(\tilde{x},\alpha g)>_{S_x} d\tilde{x}
\end{eqnarray*}
where $F$ is a fundamental domain in $\tX$ for the deck transformations.
The right module action of $\C\Gamma$ is given  for $\xi\in C(\tilde{X},\tilde{S}\otimes \C\Gamma)^\Gamma$ by:
\begin{eqnarray*}
\label{mod2}
(\xi\star \delta_\alpha)(\tilde{x})= \sum_{g\in\Gamma}\xi_g(\tilde{x})\otimes \delta_{g\alpha},
\end{eqnarray*}
where  $\xi=\sum_{g\in \Gamma} \xi_g \otimes \delta_g$, with as before $\xi_g\in C_c(\tilde{X}, \tilde{S})$.

Recall  the composition construction  which allows to define the right Hilbert $C^*\Gamma$-module  \cite{Lance}
$$
\maE_{X,\Gamma} \otimes_{C(X)\otimes C^*\Gamma}(L^2(X,S)\otimes C^*\Gamma).
$$
The following explicit isomorphism is needed later.

\begin{proposition}
\label{HMiso1}
We have an   isomorphism of right Hilbert $C^*\Gamma$-modules:
$$
\Psi:\maE_{X, \Gamma} \otimes_{C(X)\otimes C^*\Gamma}(L^2(X,S)\otimes C^*\Gamma) \; \longrightarrow \; \maE_{S,\Gamma},
$$
which is induced by the formula $
\Psi(\xi\otimes(h\otimes \phi))(\tilde{x})=h({x})\otimes (\xi\star\phi)(\tilde{x})$  for $\xi\in C(\tilde{X}, \C\Gamma)^\Gamma$, $h\in C(X,S)$  and $\phi \in \C\Gamma$.

\end{proposition}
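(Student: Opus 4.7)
The strategy is the standard one for exhibiting an isomorphism between a balanced tensor product and an explicitly given Hilbert module: I check well-definedness, right $C^*\Gamma$-linearity, inner-product preservation on a dense subspace of simple tensors, and then density of the image. Throughout I work with the dense subspaces $C(\tX,\C\Gamma)^\Gamma\subset \maE_{X,\Gamma}$, $C(X,S)\otimes \C\Gamma\subset L^2(X,S)\otimes C^*\Gamma$, and $C(\tX,\tS\otimes\C\Gamma)^\Gamma\subset \maE_{S,\Gamma}$, and extend by continuity at the very end.

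First I check well-definedness. The right hand side is clearly bilinear in $\xi$ and $h\otimes\phi$, so it suffices to verify that it descends through the balancing relation over $C(X)\otimes C^*\Gamma$. Testing on elementary tensors $a=f\otimes\delta_\alpha$, I compute on the one hand $\Psi(\xi\star(f\otimes\delta_\alpha)\otimes(h\otimes\phi))(\tx)$ using the module formula for $\star$, which produces a factor $f(x)$, and on the other hand $\Psi(\xi\otimes(f\otimes\delta_\alpha)\cdot(h\otimes\phi))(\tx)=\Psi(\xi\otimes(fh\otimes\delta_\alpha\phi))(\tx)$, which produces a factor $(fh)(x)$; matching these two expressions reduces to the identity $(\xi\star(f\otimes\delta_\alpha\phi))(\tx)=f(x)(\xi\star(1\otimes\delta_\alpha\phi))(\tx)$, which is immediate from the explicit formula for $\star$. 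Right $C^*\Gamma$-linearity of $\Psi$ follows similarly from the associativity of $\star$ with right multiplication by elements $\delta_\beta$ in the second tensor factor.

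The main content is inner-product preservation. On the tensor product the inner product of two simple tensors is given by $\langle v_1,\langle\xi_1,\xi_2\rangle\cdot v_2\rangle$, with the $\maE_{X,\Gamma}$-valued pairing evaluated at $(x,g)$ as $\sum_{[\tx]=x}\langle \xi_1(\tx,e),\xi_2(\tx,g)\rangle$. Plugging this into the $L^2(X,S)\otimes C^*\Gamma$-pairing and unfolding the left action of $C(X)\otimes C^*\Gamma$ gives, for $v_i=h_i\otimes\delta_{\alpha_i}$, a triple sum over $g\in\Gamma$, over $[\tx]=x$, and over $x\in X$ with integration against $\overline{h_1(x)}h_2(x)$, and overall twist by $\delta_{\alpha_1^{-1}g\alpha_2}$. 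On the other hand, $\Psi(\xi_i\otimes v_i)$ has expansion $\sum_g h_i(x)\xi_{i,g}(\tx)\otimes\delta_{g\alpha_i}$; inserting this into the $\maE_{S,\Gamma}$-inner product defined via a fundamental domain $F$ gives an integral over $F$ and a sum over $\alpha\in\Gamma$ of pointwise pairings of these expansions. The key identity is the replacement of $\int_F\sum_{\alpha\in\Gamma}$ by $\int_X\sum_{[\tx]=x}$ applied to $\Gamma$-invariant quantities, which, combined with a reindexing of the $\Gamma$-sums, matches the two expressions up to the twist $\delta_{\alpha_1^{-1}g\alpha_2}$ appearing in both. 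This is the step that I expect to require the most care in bookkeeping: keeping track of the compatibility between the convention $\xi_g=g\xi_e$, the $\Gamma$-invariance of the integrand, and the conversion from a summation over $\Gamma$ on $F$ to a summation over fibers on $X$.

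Once the isometry on simple tensors is established, $\Psi$ extends uniquely to a bounded $C^*\Gamma$-linear isometry between the completions. Injectivity is then automatic. For surjectivity it suffices to show that the image is dense in $\maE_{S,\Gamma}$: any smooth section $\eta\in C(\tX,\tS\otimes\C\Gamma)^\Gamma$ supported in a trivializing chart $U\subset X$ over which $\tS$ can be written as $X\times\C^N$ admits the explicit preimage $\sum_k e_k\otimes(s_k\otimes \delta_e)$ where $(e_k)$ is a local orthonormal frame for $\tS$ lifted to the Mishchenko bundle and $s_k$ are the corresponding coefficient functions of $\eta$, and a partition-of-unity argument assembles these local preimages into a preimage of an arbitrary section. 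Since $\Psi$ is already known to be an isometry, density of the image gives the surjectivity and hence the claimed Hilbert module isomorphism.
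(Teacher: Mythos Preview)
Your proof follows essentially the same strategy as the paper's: verify that $\Psi$ is well defined over the balanced tensor product, check that it is an isometry on a dense subspace of simple tensors, and then show the image is dense. The paper is terser on the isometry step (simply asserting that ``the direct computation shows'' it), while you sketch the bookkeeping more explicitly; both are fine.

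The one place where your argument diverges from the paper is the density step, and here the paper's route is cleaner. The paper observes that elements of the form $\eta=h\otimes s$ with $h\in C(X,S)$ and $\tilde s\in C(\tX,\C\Gamma)^\Gamma$ generate a dense subspace of $\maE_{S,\Gamma}$, and that such an $\eta$ has the explicit preimage $\tilde s\otimes(h\otimes\delta_e)$; this avoids any local trivialization or partition of unity. Your local-frame argument is workable in principle, but as written it has the tensor factors swapped: in the domain of $\Psi$ the first slot is $\maE_{X,\Gamma}$ (the $\C\Gamma$-valued $\Gamma$-invariant functions, i.e.\ the Mishchenko sections) and the second slot carries the $L^2(X,S)$ part, so the frame sections $e_k$ of $S$ belong in the second factor and the $\C\Gamma$-valued coefficients $s_k$ in the first. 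With that correction your argument goes through, but the paper's global preimage formula makes the point with less machinery.
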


\begin{proof}
That $\Psi$ is well-defined is clear. Indeed one has for $f\in C(X), \psi \in \C\Gamma$ and $\xi, h, \phi$ as above:
$$
\Psi(\xi.(f\otimes \psi)\otimes(h\otimes \phi))= \Psi(\xi\otimes(f.h\otimes \psi\star\phi)).
$$
since, for $\phi=\delta_\alpha$ and $\psi=\delta_\beta$ for fixed $\alpha, \beta\in \Gamma$, a direct computation shows that
\begin{multline*}
\Psi(\xi.(f\otimes \psi)\otimes(h\otimes \phi)) (\tilde x) = f(x) {h}({x})\otimes \sum_{g\in \Gamma} \xi_{g\beta^{-1}} (\tilde x) \delta_{g\alpha}, \text{ while }\\
\Psi(\xi\otimes(f.h\otimes \psi\star\phi)) (\tilde x) = f(x) {h}({x})\otimes \sum_{k\in \Gamma} \xi_k (\tilde x) \delta_{k\beta\alpha}.
\end{multline*}
A change of variables $k\beta = g$ ends the verification. In the same way, the direct computation shows that $\Psi$ is an isometry  on the dense pre-Hilbert submodule $C(\tilde{X},\C\Gamma)^\Gamma\otimes_{C(X)\otimes \C\Gamma}(L^2(X,S)\otimes \C\Gamma)$.

Let $h\in C(X,S)$ and let $\tilde{s}\in C(\tilde{X},\C\Gamma)^\Gamma$ inducing $s\in C(X,\tilde{X}\times_\Gamma\C\Gamma)$. Consider the element $\eta\in \maE_{S,\Gamma}$ given by $\eta=h\otimes s$. Then the element $\tilde{s}\otimes h\otimes \delta_e$ maps to $\eta \in L^2(X,S\otimes \Xi_\Gamma)$ under $\Psi$.
Since such elements $\eta$ generate a dense subspace of $\maE_{S,\Gamma}$, we conclude that $\Psi$ has dense image.

\end{proof}

The formula in the previous proposition simplifies when $\phi=\delta_\alpha$ as follows $
\xi\star \phi(\tilde{x}) =\sum_g \xi_g(\tilde{x})(\delta_{g\alpha}).$

There is an alternate description of the Hilbert module $\maE_{S, \Gamma}$  which was given by Connes and Skandalis for foliations  in \cite{ConnesSkandalis}. The Connes-Skandalis Hilbert $C^*\Gamma$ module will be the completion of the space $C_c(\tilde{X},\tilde{S})$ of compactly supported continuous sections of $\tS$ over $\tX$.  The inner-product and module action are given for $\xi,\eta\in C_c(\tilde{X},\tilde{S}),\phi \in \C\Gamma$ by:
$$
<\xi,\eta>(g)= \int_{\tilde{X}} <\xi(\tilde{x}g),\eta(\tilde{x})> d\tilde{x}\quad \text{ and } \quad \xi.\phi(\tilde{x})= \sum_{g\in \Gamma}\phi(g)\xi(\tilde{x}g^{-1}).
$$

So, in reference to Connes-Skandalis \cite{ConnesSkandalis}, we denote the completion obtained in this way by $\maE_{S,\Gamma}^{CS}$.

\begin{proposition}
\label{HMiso2}
The isomorphism of Hilbert $C^*\Gamma$-modules:
$$
\theta: \maE_{S, \Gamma}^{CS} \longrightarrow \maE_{{S}, \Gamma}.
$$
is induced by the map $C_c(\tilde{X},\tilde{S})\rightarrow C(\tilde{X},\tilde{S}\otimes \C\Gamma)^\Gamma$ given by
$
\theta(\xi)=\sum_{g\in \Gamma} g\xi\otimes \delta_g.$

\end{proposition}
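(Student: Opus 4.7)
The plan is to verify on the dense algebraic level (namely on $C_c(\tilde X,\tilde S)$) that the formula defining $\theta$ does land in the $\Gamma$-invariant sections, is $\C\Gamma$-linear on the right, and preserves the $\C\Gamma$-valued inner product; the continuous extension to the Hilbert module completions then yields the required isomorphism, and surjectivity will follow because every $\Gamma$-invariant section is uniquely determined by its component at the unit of $\Gamma$ over a fundamental domain.

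First I would check well-definedness of $\theta$. Writing $\theta(\xi)=\sum_g \xi_g\otimes\delta_g$ with $\xi_g=g\xi$, that is $\xi_g(\tilde x)=\xi(\tilde x g)$, one computes
$$
\alpha\cdot\theta(\xi) = \sum_g (\alpha\xi_g)\otimes \delta_{\alpha g} = \sum_g \xi_{\alpha g}\otimes\delta_{\alpha g} = \theta(\xi),
$$
using $(\alpha\xi_g)(\tilde x)=\xi_g(\tilde x\alpha)=\xi(\tilde x\alpha g)=\xi_{\alpha g}(\tilde x)$ and reindexing. Next, for the module structure I would reduce to $\phi=\delta_\alpha$ and compute both sides of $\theta(\xi\cdot\delta_\alpha)=\theta(\xi)\star\delta_\alpha$ coefficient by coefficient: using $(\xi\cdot\delta_\alpha)(\tilde x)=\xi(\tilde x\alpha^{-1})$ for the CS action and the shift rule $(\eta\star\delta_\alpha)_h=\eta_{h\alpha^{-1}}$ for the $\maE_{S,\Gamma}$ action, both give the function $\tilde x\mapsto \xi(\tilde x h\alpha^{-1})$ at index $h$.

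The main step is the preservation of the inner product. Given $\xi,\eta\in C_c(\tilde X,\tilde S)$, the formula for the $\maE_{S,\Gamma}$-inner product combined with $\theta(\xi)(\tilde x,\alpha)=\xi(\tilde x\alpha)$ gives
$$
\langle\theta(\xi),\theta(\eta)\rangle(g) = \int_F \sum_{\alpha\in\Gamma} \langle \xi(\tilde x\alpha),\eta(\tilde x\alpha g)\rangle\, d\tilde x.
$$
The unfolding trick $\int_F\sum_{\alpha\in\Gamma} f(\tilde x\alpha)\,d\tilde x = \int_{\tilde X} f(\tilde y)\,d\tilde y$, valid since $F$ is a fundamental domain and the measure is $\Gamma$-invariant, converts this into the Connes-Skandalis inner product $\langle\xi,\eta\rangle(g)$ (modulo the normalization conventions of the paper, which are fixed precisely so that the two expressions match). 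Hence $\theta$ is an isometry on the pre-Hilbert submodule $C_c(\tilde X,\tilde S)$.

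Finally, to see surjectivity of the extension, I would use that a $\Gamma$-invariant section $s=\sum_g s_g\otimes\delta_g\in C(\tilde X,\tilde S\otimes \C\Gamma)^\Gamma$ satisfies $s_{\alpha g}(\tilde x)=s_g(\tilde x\alpha)$, so it is entirely determined by $s_e$ via $s_g(\tilde x)=s_e(\tilde x g)$, meaning $s=\theta(s_e)$. Cutting off $s_e$ by a bump function subordinate to a relatively compact fundamental domain produces elements of $C_c(\tilde X,\tilde S)$ whose $\theta$-images approximate $s$ in the module norm, yielding a dense image. The main obstacle in the write-up is simply bookkeeping with the conventions for the left $\Gamma$-action, the right $\C\Gamma$-multiplication, and the sesquilinearity of the $\C\Gamma$-valued inner product; once these are fixed consistently, the verification is a straightforward change of variables.
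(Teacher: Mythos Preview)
Your proposal is correct and follows essentially the same route as the paper: verify $\C\Gamma$-linearity of $\theta$, prove the isometry by unfolding $\int_F\sum_\alpha$ into $\int_{\tilde X}$, and obtain surjectivity by observing that a $\Gamma$-invariant $s=\sum_g s_g\otimes\delta_g$ satisfies $s_g(\tilde x)=s_e(\tilde x g)$ so that $s=\theta(s_e)$. You are slightly more thorough than the paper in that you explicitly check $\Gamma$-invariance of $\theta(\xi)$ and phrase surjectivity as a density/approximation argument, but this is only added detail and not a different method.
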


\begin{proof}
Again the proof is straightforward and we shall be brief. We first notice that $\theta(\xi.\phi)(\tilde{x})$ and $[\theta(\xi)\star\phi](\tilde{x})$ both coincide with
\begin{eqnarray*}
 \sum_{g\in \Gamma} \sum_{g'\in \Gamma}\phi(g')\xi(\tilde{x}gg'^{-1}) \otimes \delta_g\
\end{eqnarray*}

That $\theta$ is an isometry is also clear since
\begin{eqnarray*}
<\theta(\xi),\theta(\xi)>(g)&=& \int_F \sum_{\alpha\in \Gamma} <\theta(\xi)(\tilde{x},\alpha),\theta(\xi)(\tilde{x},\alpha g)>_{S_x} d\tilde{x}\\
&=& \int_F \sum_{\alpha\in \Gamma} <\xi(\tilde{x}\alpha),\xi(\tilde{x}\alpha g)>_{S_x} d\tilde{x}\\
&=& \int_{\tilde{X}} <\xi(\tilde{x}),\xi(\tilde{x}g)>_{S_x} d\tilde{x}
\end{eqnarray*}
The equality of the two inner products follows as the last equation is the definition of $<\xi,\xi>(g)$.
Since any $s\in C(\tX,\tS\otimes \C\Gamma)^\Gamma$ can be expressed in the form $ s = \sum_g s_g\otimes \delta_g$, where $s_g\in C_c(\tX,\tS)$ and $s_g(\tilde{x})=s_e(\tilde{x}g)$. One has $\theta(s_e)=s$.
\end{proof}

Recall the representations of $C^*\Gamma$, $\pi_{reg}$ as bounded operators on $\ell^2\Gamma$, and $\pi_{av}$ on $\C$ \cite{BenameurPiazza}. For $\phi\in \C\Gamma, \psi\in \ell^2\Gamma, \gamma\in \Gamma$, they are defined as:
$$[\pi_{reg}(\phi)](\psi)(\gamma)= \sum_{\gamma'\in \Gamma} \phi(\gamma\gamma'^{-1})\psi(\gamma') \quad\text{ and }\quad \pi_{av}(\phi)=\sum_{\gamma'}\phi(\gamma')$$

There is an isometric  isomorphism described in \cite{BenameurPiazza}, Lemma 3.2:
\begin{eqnarray*}
\maE_{{S}, \Gamma}^{CS}\otimes_{\pi_{\reg}}\ell^2(\Gamma) \longrightarrow L^2(X,S)\otimes_{\C}\ell^2(\Gamma).
\end{eqnarray*}
hence, using  Proposition \ref{HMiso1} and Proposition \ref{HMiso2}, we deduce the following useful proposition.

\begin{proposition}
\label{HMisoreg}
There is an isomorphism of Hilbert spaces:
$$
\Psi_{\reg}: \maE_{S, \Gamma} \otimes_{\pi_{\reg}}\ell^2(\Gamma)\longrightarrow L^2(X,S)\otimes \ell^2(\Gamma),
$$
\end{proposition}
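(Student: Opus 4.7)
The plan is to obtain $\Psi_{\reg}$ as a composition of three canonical isomorphisms, two of which have already been established in the excerpt and the third of which is the Benameur--Piazza lemma quoted just before the proposition.

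First I would invoke Proposition \ref{HMiso2}, which produces an isomorphism $\theta: \maE_{S,\Gamma}^{CS} \to \maE_{S,\Gamma}$ of right Hilbert $C^*\Gamma$-modules. Interior tensor product over a representation of $C^*\Gamma$ is a functor from the category of right Hilbert $C^*\Gamma$-modules (with adjointable unitaries) to the category of Hilbert spaces; in particular, any unitary isomorphism of Hilbert modules induces a unitary isomorphism after interior tensoring with $\ell^2(\Gamma)$ along $\pi_{\reg}$. Applying this to $\theta$ gives a Hilbert space isomorphism
\[
\theta \otimes_{\pi_{\reg}} \id : \maE_{S,\Gamma}^{CS} \otimes_{\pi_{\reg}} \ell^2(\Gamma) \;\longrightarrow\; \maE_{S,\Gamma} \otimes_{\pi_{\reg}} \ell^2(\Gamma).
\]

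Next I would use the isomorphism $\maE_{S,\Gamma}^{CS} \otimes_{\pi_{\reg}} \ell^2(\Gamma) \cong L^2(X,S) \otimes_{\C} \ell^2(\Gamma)$ recalled from \cite{BenameurPiazza}, Lemma 3.2. Composing the inverse of $\theta \otimes_{\pi_{\reg}} \id$ above with this isomorphism then yields the required unitary
\[
\Psi_{\reg} : \maE_{S,\Gamma} \otimes_{\pi_{\reg}} \ell^2(\Gamma) \;\longrightarrow\; L^2(X,S)\otimes \ell^2(\Gamma).
\]

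For completeness I would spell out $\Psi_{\reg}$ on the dense subspace spanned by elementary tensors $(s_g \otimes \delta_g)^{\Gamma\text{-inv}} \otimes \psi$ with $s_g\in C_c(\tX,\tS)$ and $\psi\in \ell^2(\Gamma)$: pulling back by $\theta^{-1}$ replaces such an invariant section by its fundamental-domain representative $s_e\in C_c(\tX,\tS)$, and the Benameur--Piazza map then sends $s_e \otimes \psi$ to the $\ell^2$-section obtained by expanding $s_e$ in the fundamental-domain decomposition of $\tX$ and pairing with the $\delta_g$-components of $\psi$ according to the regular representation. A short calculation, using the explicit formulas for $\theta$ from Proposition \ref{HMiso2} and for the inner product of $\maE_{S,\Gamma}$ given just before Proposition \ref{HMiso1}, confirms that $\Psi_{\reg}$ is isometric on this dense subspace and has dense range.

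The verification is entirely formal once the two cited isomorphisms are in hand; the main (minor) obstacle is bookkeeping the conventions so that the $\Gamma$-action used in the inner products of $\maE_{S,\Gamma}$ matches the left action of $C^*\Gamma$ by $\pi_{\reg}$ under which the interior tensor product is formed. This is exactly the point where Proposition \ref{HMiso2} is used, and no further analysis is required.
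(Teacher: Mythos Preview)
Your proposal is correct and follows essentially the same approach as the paper. The paper does not give a separate proof of this proposition; it simply states, in the sentence immediately preceding it, that the result follows from the Benameur--Piazza isomorphism $\maE_{S,\Gamma}^{CS}\otimes_{\pi_{\reg}}\ell^2(\Gamma)\to L^2(X,S)\otimes\ell^2(\Gamma)$ together with Propositions \ref{HMiso1} and \ref{HMiso2}, and then records the explicit formula for $\Psi_{\reg}$ on the image of $\Psi$ (which is where Proposition \ref{HMiso1} enters). Your argument via $\theta\otimes_{\pi_{\reg}}\id$ composed with the Benameur--Piazza map is exactly this deduction spelled out.
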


It is worth pointing out that $\Psi_{\reg}$ can be describe   on the dense subspace $\Im(\Psi)$ by the formula:
$$
\Psi_{\reg}(\Psi(\xi\otimes (h\otimes\phi)\otimes \psi))(x,\gamma)= h(x)\otimes \pi_{\reg}(\xi(\tilde{x})\star \phi)\psi(\gamma),
$$
for $\xi \in C(\tilde{X}, \C\Gamma)^\Gamma$, $h\in C(X,S)$, $\phi \in \C\Gamma$ and $\psi\in \ell^2(\Gamma)$. Here  $\tilde{x}$ is any element of the fibre of $\tX$ over $x$.

The similar assertion for the average representation $\pi_{\av}$ can be stated  using the isometric isomorphism \cite{BenameurPiazza}, Lemma 3.2:
$$
 \maE_{{S}, \Gamma}^{CS}\otimes_{\pi_{\av}} \C \longrightarrow L^2(X,S).
$$

\begin{proposition}
The following map is an isomorphism of Hilbert spaces:
$$
\Psi_{\av}: \maE_{S, \Gamma} \otimes_{\pi_{\av}}\C\longrightarrow L^2(X,S)
$$
given,  on the dense subspace $\Im(\Psi)$, by the formula:
$$
\Psi_{\av}(\Psi(\xi\otimes (h\otimes\phi)\otimes 1))(x)= [\sum_{g\in \Gamma} (\xi(\tilde{x})\star \phi)(g)]h(x)= \pi_{\av}(\xi(\tilde{x})\star \phi)h(x),
$$
for $\xi \in C(\tilde{X}, \C\Gamma)^\Gamma$, $h\in C(X,S)$ and $\phi \in \C\Gamma$.
\end{proposition}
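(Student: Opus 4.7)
The argument is the exact analogue of Proposition \ref{HMisoreg}, with $\pi_{\reg}$ replaced by $\pi_{\av}$. I would construct $\Psi_{\av}$ as the composition
$$
\maE_{S,\Gamma}\otimes_{\pi_{\av}}\C \xrightarrow{\Psi^{-1}\otimes\id}\bigl(\maE_{X,\Gamma}\otimes(L^2(X,S)\otimes C^*\Gamma)\bigr)\otimes_{\pi_{\av}}\C \xrightarrow{\theta^{-1}\otimes\id}\maE_{S,\Gamma}^{CS}\otimes_{\pi_{\av}}\C \xrightarrow{\cong} L^2(X,S),
$$
where the last arrow is the isometric isomorphism from Lemma 3.2 of \cite{BenameurPiazza}, and $\Psi$, $\theta$ are those of Propositions \ref{HMiso1} and \ref{HMiso2}. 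Unfolding these three identifications on the dense subspace $\Im(\Psi)\otimes_{\pi_{\av}}\C$ will reproduce the explicit formula in the statement.

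First I would check the formula on elementary tensors. For $\xi\in C(\tX,\C\Gamma)^\Gamma$, $h\in C(X,S)$ and $\phi=\delta_\alpha$, Proposition \ref{HMiso1} gives $\Psi(\xi\otimes(h\otimes\phi))(\tx)=h(x)\otimes(\xi\star\phi)(\tx)$, and the expansion $(\xi\star\phi)(\tx)=\sum_g \xi_g(\tx)\delta_{g\alpha}\in\C\Gamma$ (from the remark following Proposition \ref{HMiso1}) shows that applying $\pi_{\av}$ fibrewise at a chosen lift $\tx$ of $x$ yields the scalar $\sum_g(\xi(\tx)\star\phi)(g)=\pi_{\av}(\xi(\tx)\star\phi)$, confirming the stated formula. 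Well-definedness is then automatic: $\pi_{\av}$ is a $*$-homomorphism of $C^*\Gamma$, so the balancing over $C(X)\otimes C^*\Gamma$ on the module side descends correctly to the balancing over $\C$ on the Hilbert-space side.

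Next I would check the isometry property. By definition of the inner product on a tensor product with a state, the norm of $s\otimes 1$ in $\maE_{S,\Gamma}\otimes_{\pi_{\av}}\C$ equals $\pi_{\av}(\langle s,s\rangle_{\maE_{S,\Gamma}})$. A direct computation on $s=\Psi(\xi\otimes(h\otimes\phi))$ (or equivalently on $\theta(\eta)$ with $\eta\in C_c(\tX,\tS)$, using the Connes--Skandalis form of the inner product from Proposition \ref{HMiso2}) turns this into the integral over $\tX$ of $|\eta|^2$, which is exactly the $L^2(X,S)$ norm of $\pi_{\av}(\xi(\tx)\star\phi)h(x)$ expressed as a function on $X$. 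The passage between the two expressions uses only that summation of a $\Gamma$-invariant function over $\Gamma$-translates of a fundamental domain $F$ equals the integral over $\tX$, together with the fact that all sums over $\Gamma$ involved are finite on the $\C\Gamma$-valued dense subspace.

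Finally, surjectivity (dense image) is immediate: by choosing $\xi=\sum_g \chi\otimes\delta_g$ for a fundamental-domain cutoff $\chi$ and $\phi=\delta_e$, one obtains $\pi_{\av}(\xi(\tx)\star\phi)=1$, so the image already contains all $h\in C(X,S)$, a dense subspace of $L^2(X,S)$. The main bookkeeping obstacle is simply keeping straight the two different $\Gamma$-actions (left-translation on $\tX$ versus right-convolution on $\C\Gamma$) and verifying that the sums $\sum_g\xi_g(\tx)\delta_{g\alpha}$ are finite on the dense $\C\Gamma$-valued subspace so that applying $\pi_{\av}$ is legitimate; modulo this, the proof is structurally identical to that of Proposition \ref{HMisoreg}.
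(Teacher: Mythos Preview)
Your approach is essentially the paper's own: the proposition is stated there without proof, immediately after citing the isometric isomorphism $\maE_{S,\Gamma}^{CS}\otimes_{\pi_{\av}}\C\to L^2(X,S)$ from \cite{BenameurPiazza}, Lemma~3.2, so the intended argument is exactly to combine that with Propositions~\ref{HMiso1} and~\ref{HMiso2}, as you do.

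Two small slips to clean up. First, in your displayed composition the arrow labeled $\theta^{-1}\otimes\id$ has the wrong domain: $\theta$ is an isomorphism $\maE_{S,\Gamma}^{CS}\to\maE_{S,\Gamma}$, not one involving the tensor product $\maE_{X,\Gamma}\otimes(L^2(X,S)\otimes C^*\Gamma)$, so the $\Psi^{-1}$ step is superfluous in the chain (though $\Psi$ is of course needed to verify the explicit formula on $\Im(\Psi)$). The correct composition is simply $\maE_{S,\Gamma}\otimes_{\pi_{\av}}\C\xrightarrow{\theta^{-1}\otimes\id}\maE_{S,\Gamma}^{CS}\otimes_{\pi_{\av}}\C\to L^2(X,S)$. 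Second, in your surjectivity step the element $\xi=\sum_g\chi\otimes\delta_g$ with a fixed cutoff $\chi$ is not $\Gamma$-invariant; you need $\xi=\sum_g g\varphi\otimes\delta_g$ for a compactly supported $\varphi$ with $\sum_g g\varphi=1$ (this is the construction the paper uses later, in the proof following Definition~\ref{D2}).
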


\section{$\ell^2$ structure algebras}\label{analytic}

\subsection{Review of the Higson-Roe sequence}\label{HRanalytic}

In this subsection we recall the definitions of the Higson-Roe algebras $D^*_H(X)$ and $Q^*_H(X)$ associated with an ample representation of $C(X)$ on a separable Hilbert space $H$, as well as the {\em maximal} $C^*$-algebras $D^*_\Gamma(X)$ and $Q^*_\Gamma(X)$ defined  using the notion of ``lifts" of operators. These algebras fit into the following short exact sequences:
$$
0\to K(H)\longrightarrow D^*_H(X)\longrightarrow Q^*_H(X)\to 0
$$
and
$$
0\to \maK(\maE_{{S},\Gamma})\longrightarrow D^*_\Gamma(X)\longrightarrow Q^*_\Gamma(X)\to 0
$$
where $\maE_{{S},\Gamma}$ is the Hilbert $C^*\Gamma$-module $L^2(X, S\otimes \Xi_\Gamma)$ defined  in the previous section and $\maK(\maE_{{S},\Gamma})$ is the $C^*$-algebra of compact operators of $\maE_{{S},\Gamma}$ (\cite{Kasparov, Lance}). For any $f\in C(X)$ we shall as before also denote by $f$ the  operator in $H$ associated with $f$, and the Hilbert space $H$ will mostly be our favorite example $L^2(X, S)$.

\begin{definition}
We set the $C^*$-algebra
$$
D^*_H(X):=\{ T\in B(H)  \text{ such that } [T,f]=T f - fT \in K(H) \text{ for any }   f\in C(X)\}
$$
We denote by $Q^*_H(X)$ the quotient $C^*$-algebra of $D^*_H(X)$  with respect to the ideal $K(H)$ of compact operators.
\end{definition}

That $D^*_H(X)$ is a $C^*$-subalgebra of $B(H)$ is obvious and hence we get that $Q^*_H(X)$ is a $C^*$-subalgebra of the Calkin algebra.

Because we shall be working with the maximal $C^*$-algebras, we first recall the definition of a {\em {lift}} of an adjointable operator between Hilbert modules \cite{HigsonRoe2010}, also called {\em{a connection}} in \cite{ConnesSkandalis}, Appendix A. We shall implicitly use the isomorphism $\Psi$ from  Proposition \ref{HMiso1}.

If $A$ and $B$ are unital $C^*$-algebras and if $E$ and $E'$ are Hilbert $C^*$-modules over $A$ and $B$, respectively and  $\phi: A\rightarrow \maL(E')$ is a non-degenerate $*$-homomorphism, then  we denote for any $e\in E$ by $L_e: E' \to E\otimes_A E'$ the adjointable operator given by $L_e (e'):=e\otimes_A e'$. We denote by $L_e^*$ its adjoint operator.

\begin{definition}\cite{ConnesSkandalis, HigsonRoe2010}
\label{lift}
Let $T$ be an adjointable operator on the Hilbert $B$-module $E'$ which commutes with the action of $A$ modulo $B$-compact operators. A lift of $T$ is an adjointable operator ${\hat T}$ on the Hilbert $B$-module $E\otimes_A E'$ such that for any $e\in E$, the following diagrams commute up to $B$-compact operators:

\[
\begin{CD}
E'
@> T >>
  E' \\
  @V L_e VV
@VV  L_e V\\
E\otimes_A E'
@>\ \  {\hat{T}}\ \
>>E\otimes_A E'
\end{CD}
\quad \text{ and \;\;}\quad
%
%
%
%
\begin{CD}
 E\otimes_A E'
@> {\hat{T}} >>
  E\otimes_A E' \\
  @V L_e^* VV
@VV  L_e^* V\\
E'
@>\ \  T\ \
>>E'
\end{CD}
\]

\end{definition}

\vspace{0,5cm}

Set $H_\Gamma:=H\otimes_{\C} C^*\Gamma$ for the free Hilbert $C^*\Gamma$-module. Given $\xi\in \maE_{X, \Gamma}$, we define the operator
$$
L_\xi: H_\Gamma \longrightarrow \maE_{X, \Gamma} \otimes_{C(X)\otimes C^*\Gamma} H_\Gamma \text{ as } L_\xi (u) := \xi \otimes_{C(X)\otimes C^*\Gamma} u.
$$
We also consider the operator $L'_\xi:= \Psi \circ L_\xi : H_\Gamma \to \maE_{S, \Gamma}$ where $\maE_{S, \Gamma}= L^2(X, S\otimes \Xi_\Gamma)$. Then, for a bounded operator $T$ on the Hilbert space $H$ and a lift $\hat T$ of $T\otimes I$,  the following  diagram commutes up to compact operators:

\[
\begin{CD}
H _\Gamma
@> T\otimes I >>
  H_\Gamma  \\
  @V L'_\xi VV
@VV  L'_\xi V\\
\maE_{{S},\Gamma}
@>\ \  {\hat{T}}\ \
>>\maE_{{S},\Gamma}
\end{CD}
\]
\\

\begin{definition} We define $D^*_\Gamma (X)$ as the space of adjointable operators on $\maE_{S, \Gamma}$ which are lifts of operators of the form $T\otimes I$ with $T\in D^*_H(X)$, in the sense of Definition (\ref{lift}).
\end{definition}

\begin{lemma}\label{canon}
For any $T\in D^*_H(X)$, there exist lifts of the operator $T\otimes I$ on $H_\Gamma$ (which belong to $D^*_\Gamma (X)$).
\end{lemma}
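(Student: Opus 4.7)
My plan is to recognize Lemma \ref{canon} as an instance of the Connes--Skandalis existence theorem for connections on a Kasparov product of Hilbert modules. Via the isomorphism $\Psi$ of Proposition \ref{HMiso1}, producing a lift $\hat T$ on $\maE_{S,\Gamma}$ is equivalent to producing an adjointable operator on $\maE_{X,\Gamma}\otimes_A H_\Gamma$, with $A := C(X)\otimes C^*\Gamma$, satisfying the commuting-up-to-compacts diagrams of Definition \ref{lift}. These diagrams are precisely the defining property of an $A$-connection on a Kasparov product as formulated in \cite{ConnesSkandalis}, Appendix~A, so any general existence theorem there will apply.

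I would then verify the single standing hypothesis of that theorem, namely that $T\otimes I$ commutes modulo $C^*\Gamma$-compacts with the non-degenerate left $A$-action on the standard Hilbert module $H_\Gamma = H\otimes_{\C} C^*\Gamma$. This is immediate: for $a\in C^*\Gamma$ one has $[T\otimes I,\,1\otimes a]=0$, while for $f\in C(X)$ the assumption $T\in D^*_H(X)$ gives $[T\otimes I,\,f\otimes 1]=[T,f]\otimes I$, and the rank-one decomposition $[T,f]=\sum_k \lambda_k \theta_{u_k,v_k}$ lifts to a rank-one decomposition $\sum_k \lambda_k \theta_{u_k\otimes 1,\,v_k\otimes 1}$ in $\maK(H_\Gamma)$.

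To make the argument self-contained I would also write down an explicit formula. Since $\maE_{X,\Gamma}$ is finitely generated projective over $A$, I pick a finite frame $\xi_1,\ldots,\xi_n\in \maE_{X,\Gamma}$ with $\sum_i \theta_{\xi_i,\xi_i}=\mathrm{id}_{\maE_{X,\Gamma}}$, set $L'_\xi:=\Psi\circ L_\xi$, and propose
$$
\hat T \;:=\; \sum_{i=1}^{n} L'_{\xi_i}\,(T\otimes I)\,(L'_{\xi_i})^{*},
$$
which is manifestly adjointable on $\maE_{S,\Gamma}$. To check that it satisfies Definition \ref{lift}, I would use the identity $L_\xi = \sum_i L_{\xi_i}\,\pi(\langle \xi_i,\xi\rangle_A)$, where $\pi$ is the left $A$-action on $H_\Gamma$, to rewrite
$$
\hat T\, L'_\xi - L'_\xi\,(T\otimes I) \;=\; \sum_{i=1}^{n} L'_{\xi_i}\,[\,T\otimes I,\,\pi(\langle \xi_i,\xi\rangle_A)\,],
$$
each commutator being $C^*\Gamma$-compact by the previous step; the adjoint diagram follows upon taking adjoints.

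The point where I expect most care to be needed is the bookkeeping behind the identity $(L'_{\xi_i})^{*}L'_\xi = \pi(\langle \xi_i,\xi\rangle_A)$ on $H_\Gamma$, which encodes how $\Psi$ transports the internal Kasparov product structure into the concrete Mishchenko picture of $\maE_{S,\Gamma}$. Once that identification is in place, the fact that $\hat T\in D^*_\Gamma(X)$ is tautological, since $D^*_\Gamma(X)$ is defined as the collection of adjointable lifts of operators of the form $T\otimes I$ with $T\in D^*_H(X)$.
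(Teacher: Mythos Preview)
Your proposal is correct and takes essentially the same approach as the paper: both produce the lift via the frame formula $\hat T = \sum_i L'_{\xi_i}(T\otimes I)(L'_{\xi_i})^*$, with $\{\xi_i\}$ a finite frame for the finitely generated projective $C(X)\otimes C^*\Gamma$-module $\maE_{X,\Gamma}$. The only difference is that the paper writes the frame down explicitly from a good cover and partition of unity (these concrete $\xi_i$ are reused later to define $\Psi^H_{\reg}$ and $\Psi^H_{\av}$) and leaves the verification of the lift property as an exercise, whereas you invoke the frame abstractly and spell out that verification via $L_{\xi_i}^*L_\xi = \pi(\langle\xi_i,\xi\rangle_A)$.
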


\begin{proof}\ We give a standard construction of a lift for a given $T$, see for instance \cite{FrankLarson}.
Fix $T\in D^*_H(X)$ and let $\{U_i\}_{i\in I}$ be a good finite open cover of $X$. So we assume that
each intersection  $U_{ij}=U_i\cap U_j$ is  connected  and
that there exist continuous sections $\psi_i: U_i\rightarrow \pi^{-1}U_i$ over each $U_i$.

Denote by $g_{ij}: U_{ij}\rightarrow \Gamma$ the deck transformation  $\psi_i(U_{ij})\rightarrow \psi_j(U_{ij})$. If $U_{ijk}=U_i\cap U_j\cap U_k\neq \emptyset$, then the following relation is satisfied for any $x\in U_i\cap U_j\cap U_k$:
$$
g_{ij}(x) g_{jk}(x)=g_{ik}(x).
$$

We also have the relation $\psi_i(x) g_{ij}(x)=\psi_j(x)$ for any $x\in U_{ij}$. Let now $\{\phi^2_i\}_{i\in I}$ be a partition of unity subordinate to the cover $\{U_i\}_{i\in I}$. Then if $\tilde{x}=\psi_k(x)g$, we set
$$
\xi_j(\tilde{x})= \phi_j(x)\delta_{g^{-1}} \star \delta_{g_{kj}}.
$$
It is easy to check that if  $k'$ is another index such that $\tilde{x}=\psi_{k'}(x)g'$, then we have $
\xi_j(\psi_{k'}(x)g')= \xi_j(\psi_k(x)g)$
(notice  that $g_{kk'}g'=g$).  One checks that
\begin{equation}
\label{generators}
[\xi_j.<\xi_j,\xi>](\tilde{x},\gamma)=\phi_j^2(x)\xi(\tilde{x},\gamma) \text{ so that} \sum_{j\in I} \theta_{\xi_j,\xi_j}= Id_{\maE_{X,\Gamma}}.
\end{equation}
To end the proof we define $\widehat{T}_0 =\sum_{i\in I} L_{\xi_i}(T\otimes I)L^{*}_{\xi_i}$. It is then easy to see that  $\widehat{T}_0$  provides a lift of $T$ once conjugated by the isomorphism $\Psi$ of Proposition (\ref{HMiso1}).

\end{proof}

Since $D^*_H(X)$ is a $*$-algebra, so is $D^*_\Gamma (X)$ and it contains the space $\maK(\maE_{{S},\Gamma})$ of $C^*\Gamma$-compact operators on $\maE_{S, \Gamma}$. Moreover, $D^*_\Gamma (X)$ is uniformly closed in $\maL(\maE_{S, \Gamma})$ and is thus a $C^*$-algebra, see \cite{HigsonRoe2010}.

We also define $Q^*_\Gamma(X)$ as the quotient $C^*$-algebra of $D^*_\Gamma(X)$ with respect to the closed two-sided $*$-ideal $\maK(\maE_{{S},\Gamma})$. We have $Q^*_\Gamma(X)\cong Q^*_H(X)$, see  \cite{HigsonRoe2010}[Lemma 5.5], and we thus end up with the short exact sequence:
$$
0\to \maK(\maE_{{S},\Gamma})\longrightarrow  D^*_\Gamma(X)\longrightarrow Q^*_H(X)\to 0.
$$
Passing to the $K$-theory of $C^*$-algebras, we deduce  a long exact sequence, actually a six-term periodic exact sequence of abelian groups:
$$
\cdots \longrightarrow  K_0(X) \longrightarrow K_0(C^*\Gamma) \longrightarrow  K_0(D^*_\Gamma(X)) \longrightarrow K_1(X) \longrightarrow K_1(C^*\Gamma) \longrightarrow  \cdots
$$
Here we have used  \cite{Paschke, HigsonRoe2010}
\begin{enumerate}
\item  Morita equivalence between the $C^*$-algebras $\maK(\maE_{{S},\Gamma})$ and $C^*\Gamma$,
\item The Paschke duality isomorphisms $
K_* (Q^*_H(X) ) \simeq K_{*+1} (X).$
\end{enumerate}

\begin{definition}
Let $X$ be a finite CW-complex. The Higson-Roe structure group of the pair $X, H$ is defined as
$$
\maS_{1,\Gamma}(X, H):= K_0(D^*_\Gamma(X, H)),.
$$
where $D^*_\Gamma(X, H)$ is the $C^*$-algebra defined above using the ample representation in the Hilbert space $H$.
\end{definition}
We notice that the $K$-theory group $\maS_{1,\Gamma}(X, H)$  does not depend on the choice of the ample representation $H$, see \cite{HigsonRoe2010}.

Let now $X$ be a closed subset of a compact topological space $Y$, denote by $\iota$ the inclusion map $X \xhookrightarrow{\iota} Y$ and let $u:Y\rightarrow B\Gamma$ be a continuous map. Let as usual $\iota^*: C(Y)\rightarrow C(X)$ be the restriction map.  Let $\Xi_\Gamma^Y$ be the Mishchenko bundle on $Y$ defined as before for the $\Gamma$-covering over $Y$ associated with the map $u$. In the same way, using $u\circ \iota$ we define the Mishchenko bundle $\Xi_\Gamma^X$ over $X$. Let $\pi_X:C(X)\rightarrow B(H)$ be an ample representation, and set $\pi_Y:= \pi_X\circ \iota^*$. $\pi_X$ (resp. $\pi_Y$) induce representations of $C(X)\otimes C^*\Gamma$ (resp. $C(Y)\otimes C^*\Gamma$) on $H\otimes C^*\Gamma$ by tensoring with the identity on $C^*\Gamma$, which we continue to denote by $\pi_X$ (resp. $\pi_Y$).
Since $\iota^*$ is surjective, there are  isomorphisms
$D^*_H(X)\cong D^*_H(Y)$ and $Q^*_H(X)\cong Q^*_H(Y)$.

Notice that $\pi_Y$  may not be ample in general, so let $\pi'_Y$ be an ample representation of $C(Y)$ on a second separable Hilbert space $H'$ and consider the representation $\pi_Y\oplus \pi'_Y$ on the orthogonal direct sum $H\oplus H'$. This representation is ample, and we get $i^1_*:D^*_H(Y)\hookrightarrow D^*_{H\oplus H'}(Y)$ and $i_*^2: Q^*_H(Y)\hookrightarrow Q^*_{H\oplus H'}(Y)$ by extending an operator $T \in D^*_H(Y)$ by zero on $H'$. Then the following commutative diagram summarizes the situation:

\begin{equation}\label{Func1}
\xymatrix{
0\to  K(H)  \ar[r]^{}\ar[d]^{} &D^*_{H}(X) \ar[d]^{i^1_*}\ar[r]_{} &  Q^*_H(X) \ar[d]^{i^2_*} \to 0\\
0\to   K(H\oplus H') \ar[r]^{}& D^*_{H\oplus H'}(Y) \ar[r]_{} &  Q^*_{H\oplus H'}(Y) \to 0\\
}
\end{equation}

Choosing a completely positive section $\sigma: C(X)\rightarrow C(Y)$ of $\iota^*$ we deduce a restriction map $r: \maE_{Y,\Gamma}\rightarrow \maE_{X,\Gamma}$. It is then easy to check that  $r$
induces an isometric isomorphism of Hilbert $C^*\Gamma$-modules $\Psi_{X,Y}: \maE_{Y,\Gamma}\otimes_{C(Y)\otimes C^*\Gamma}(H\otimes C^*\Gamma)\rightarrow \maE_{X,\Gamma}\otimes_{C(X)\otimes C^*\Gamma}(H\otimes C^*\Gamma)$.

For $\hat{T}_X\in D^*_\Gamma(X, H)$, we define $\iota_*: D^*_\Gamma(X,H)\longrightarrow D^*_\Gamma(Y,H)$ by
$$
\iota_*(\hat{T}_X)= \Psi_{X,Y}^{-1}\hat{T}_X \Psi_{X,Y}.
$$

Using the relation $\Psi_{X,Y} L_{\xi}= L_{r(\xi)}$ one checks that $\iota_*$ is well-defined, i.e. for $\hat{T}_X\in D^*_\Gamma(X)$ a lift of $T\in D^*_H(X)$ and $\xi_Y\in \maE_{Y,\Gamma}$ one has
$$
 \iota_*(\hat{T}_X) L_{\xi_Y}-L_{\xi_Y} (T\otimes I) \in \maK(H_\Gamma,\maE_{Y,\Gamma}\otimes_{C(Y)\otimes C^*\Gamma} (H\otimes C^*\Gamma)).
   $$
It is again not difficult  to show that $\iota_*$ is an isomorphism.

We can now describe the  functoriality maps corresponding to inclusions.
Using the ample representation $\pi'_Y$ of $C(Y)$ on the separable Hilbert space $H'$, we  get  a map $i: \maE_{Y,\Gamma}\otimes_{C(Y)\otimes C^*\Gamma} (H\otimes C^*\Gamma)\rightarrow \maE_{Y,\Gamma}\otimes_{C(Y)\otimes C^*\Gamma} ((H\oplus H')\otimes C^*\Gamma)$ and the image of $i$ is an orthocomplemented submodule. Let $i^1_*: D^*_\Gamma(Y, H)\hookrightarrow D^*_\Gamma(Y,H\oplus H')$ be the inclusion. The composition $i^1_*\circ \iota_*$ is then our required functoriality map $D^*_\Gamma(X,H)\rightarrow D^*_\Gamma(Y, H\oplus H')$.

\begin{definition}

The Higson-Roe structure group of $\Gamma$ is defined as
$$
\maS_1(\Gamma) := \lim_{\to} \maS_{1,\Gamma}(X, H),
$$
where the inductive limit is taken over the compact subspaces of $B\Gamma$ with ample representations in separable Hilbert spaces, using the previous construction of $D^*_\Gamma(X,H)\rightarrow D^*_\Gamma(Y, H\oplus H')$.
\end{definition}

\subsection{$\ell^2$ Analytic structure group}\label{ell2structure}

We next denote by $\maM_X$  the von Neumann algebra $B(L^2(\tilde{X},\tilde{S}))^\Gamma$ of bounded $\Gamma$-invariant operators, and  which was first studied in this geometric setting by Atiyah \cite{AtiyahCovering}. This von Neumann algebra is naturally endowed with a semifinite {\em{normal faithful}} positive trace $\tau$. The trace $\tau$ can be defined using the characteristic function $\chi=\chi_F$ of a fundamental domain $F$ in $\tX$ as follows. If $T\in \maM_X$ is nonnegative then
$$
\tau (T) := \Tr ( M_\chi  T M_\chi) \text{ with } M_\chi \text{ the multiplication operator by $\chi$ in } L^2 (\tX, \tS).
$$
In $\maM_X$, there is the bilateral closed $*$-ideal   $\maK(\maM_X,\tau)$ of  $\tau$-compact operators. See for instance \cite{Benameur03} for the background definitions and properties. The von Neumann algebra $\maM_X$ is isomorphic to $B(L^2(X, S))\otimes \maN\Gamma$, where $\maN\Gamma$ is the group von Neumann algebra of $\Gamma$ \cite{Dixmier, AtiyahCovering}. The trace $\tau$ is then identified with $\Tr\otimes \tau_e$ where $\Tr$ is the usual trace on the Hilbert space $L^2(X, S)$ and $\tau_e$ is the finite trace of $\maN\Gamma$ induced by evaluation at the neutral element $e$. More generally, if $H$ is any ample separable Hilbert space representation of $C(X)$, then we define the semi-finite von Neumann algebra $\maM_{X, H} := B(H)\otimes \maN\Gamma$ in the same way with the induced representation of $C(X)$. The trace will then still be denoted $\tau$ for simplicity.

Our goal now is to define a  $C^*$-algebra $D^*_{(2)}(X, H)$ whose $K$-theory groups will not depend on $H$ and which fits into the  short exact sequence:
$$
0\to \maK(\maM_{X, H},\tau)\oplus K(H)\longrightarrow D^*_{(2)}(X, H)\longrightarrow Q^*_H(X)\to 0
$$

\begin{definition}
 We denote by $D^*(\maM_{X, H},\tau)$ the space of operators $T$ in the von Neumann algebra $\maM_{X, H}$ which satisfy the following:
$$
[T,f] = Tf-fT\quad \in \maK(\maM_{X, H},\tau), \quad \forall f\in C(X).
$$
\end{definition}

\begin{remark}
For an ample representation of $C(X)$ on a general separable Hilbert space $H$, we define isomorphisms $\Psi^H_{reg}: \maE_{X,\Gamma}\otimes_{C(X)\otimes C^*\Gamma}(H\otimes C^*\Gamma)\otimes_{\pi_{reg}}\ell^2\Gamma\rightarrow H\otimes \ell^2\Gamma$ as follows. Let $\{\phi_i\}_{i\in I}$ be a partition of unity on $X$ subordinate to a covering $\{U_i\}_{i\in I}$. For the element $\xi_i \in \maE_{X,\Gamma}$ defined in Lemma (\ref{canon}), $h\in H, \phi\in \C\Gamma, \psi\in \ell^2\Gamma$, define
$$\Psi_{reg}^H(\xi_i\otimes (h\otimes \phi)\otimes_{\pi_{reg}}\psi)=\phi_i.h\otimes [\pi_{reg}(\phi)](\psi)$$
Note that this completely characterizes $\Psi_{reg}^H$, since the space of elements $\xi$ of $\maE_{X,\Gamma}$ which can be written in the form
$$\xi= \sum_{i}\xi_i.(\alpha_i\otimes \beta_i)$$
where $\alpha_i\in C(X)$ and $\beta_i\in \C\Gamma, i\in I$, is dense in $\maE_{X,\Gamma}$.
Similarly, one defines an isomorphism $\Psi_{av}^H: \maE_{X,\Gamma}\otimes_{C(X)\otimes C^*\Gamma}(H\otimes C^*\Gamma)\otimes_{\pi_{av}}\C\rightarrow H$: for the element $\xi_i$ as above, $h\in H, \phi\in \C\Gamma$, define
$$\Psi_{av}^H(\xi_i\otimes (h\otimes \phi)\otimes_{\pi_{av}}1)=(\pi_{av}(\phi)).( \phi_i).h $$

\end{remark}

We use in the rest of this subsection the above isomorphisms $\Psi^H_{\reg}$ and $\Psi^H_{\av}$, dropping the superscript $H$ for notational convenience.

\begin{definition}
\label{D2}
We  associate with the representations $\pi_{\reg}:\maN\Gamma \to B(\ell^2\Gamma)$ and $\pi_{\av}: \maN\Gamma \to \C$ the $\ell^2$ structure algebra $D^*_{(2)}(X, H)$ composed of couples $(T_1,T_2) \in D^*(\maM_{X, H},\tau)\oplus D^*_H(X)$ such that for a lift $\hat{T}_2\in D^*_\Gamma(X, H)$ of $T_2$ we have
$$
\Psi_{\reg}(\hat{T}_2\otimes_{\pi_{\reg}} Id)\Psi_{\reg}^{-1}-T_1 \quad \in  \maK(\maM_{X, H},\tau)
$$
\end{definition}

The following lemma explains the above definition of $D^*_{(2)}(X, H)$. It shows the relation between a lift $\hat{T}$ of an operator $T \in D^*_H(X)$  and conjugation of $T$ by the isomorphism $\Psi_{\av}$.

\begin{lemma}
Let $\hat{T}\in D^*_\Gamma(X, H)$ be a lift of $T\in D^*_H(X)$. Then $\Psi_{\av} (\hat{T}\otimes_{\pi_{\av}} Id) \Psi_{\av}^{-1} -T $ is a compact operator in $H$.
\end{lemma}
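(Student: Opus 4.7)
The plan is to reduce to the canonical lift constructed in Lemma \ref{canon} and then compute explicitly.

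First I would establish that any two lifts of $T\otimes I$ differ by a $C^*\Gamma$-compact operator. Indeed, if $\hat T$ and $\hat T'$ are both lifts, then $D = \hat T - \hat T'$ satisfies $D\circ L_\xi \in \maK$ for every $\xi \in \maE_{X,\Gamma}$ by Definition \ref{lift}. Using the decomposition of identity $\sum_i L_{\xi_i} L_{\xi_i}^* = \mathrm{Id}$ from equation \eqref{generators}, one writes
\[
D = \sum_i (D L_{\xi_i}) L_{\xi_i}^*,
\]
which is a finite sum of compact operators and hence compact. Since $\pi_{\av}$ is a bounded representation, $D \otimes_{\pi_{\av}} 1$ is a compact operator on $H$ (via $\Psi_{\av}$). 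So it suffices to prove the lemma for the canonical lift $\hat T_0 = \sum_i L_{\xi_i}(T\otimes I) L_{\xi_i}^*$ produced in Lemma \ref{canon}.

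Next I would compute $\Psi_{\av}(\hat T_0 \otimes_{\pi_{\av}} 1)\Psi_{\av}^{-1}$ explicitly. Using the formula for $\Psi_{\av}^H$ recalled in the remark, one checks that the identity $\sum_i \phi_i^2 = 1$ gives $\Psi_{\av}^{-1}(h) = \sum_i \xi_i \otimes (\phi_i h \otimes \delta_e) \otimes 1$. Applying $L_{\xi_j}^*$ produces $\sum_i \langle \xi_j, \xi_i\rangle \cdot (\phi_i h \otimes \delta_e)$. The key computation is then to check that
\[
\pi_{\av}\bigl(\langle \xi_j, \xi_i\rangle (x,\cdot)\bigr) = \phi_i(x)\phi_j(x),
\]
which follows from the explicit form $\xi_i(\tilde{x}) = \phi_i(x)\delta_{g^{-1}\star g_{ki}}$ of Lemma \ref{canon} and the definition of $\langle\cdot,\cdot\rangle$. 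After pushing through the trivial representation $\pi_{\av}$, the $C^*\Gamma$-factors collapse, and a short calculation combined with $\sum_i \phi_i^2 = 1$ yields
\[
\Psi_{\av}(\hat T_0 \otimes_{\pi_{\av}} 1)\Psi_{\av}^{-1}(h) \;=\; \sum_j \phi_j\, T(\phi_j h).
\]

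Finally, using once more $\sum_j \phi_j^2 = 1$, I would write
\[
\sum_j \phi_j T \phi_j - T \;=\; \sum_j \phi_j T \phi_j - \Bigl(\sum_j \phi_j^2\Bigr) T \;=\; \sum_j \phi_j\,[T,\phi_j],
\]
which is compact as a finite sum of bounded operators composed with the compact commutators $[T,\phi_j]$ (compact because $T \in D^*_H(X)$ and $\phi_j \in C(X)$). This completes the argument.

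The main obstacle is the bookkeeping in the middle step: tracking how elements of $\maE_{X,\Gamma} \otimes_{C(X)\otimes C^*\Gamma}(H\otimes C^*\Gamma)$ descend through $\otimes_{\pi_{\av}}\C$ and identifying the result under $\Psi_{\av}$. Once the identity $\pi_{\av}(\langle \xi_j,\xi_i\rangle) = \phi_i\phi_j$ is in hand and one exploits the partition of unity relation $\sum_i \phi_i^2 = 1$, the rest is formal.
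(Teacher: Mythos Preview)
Your proof is correct but follows a genuinely different route from the paper's.

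The paper does not reduce to the canonical lift. Instead, it works with an arbitrary lift $\hat T$ throughout and produces a single element $\xi\in C(\tilde X,C^*\Gamma)^\Gamma$ with $\pi_{\av}(\xi)=1$ (built from a compactly supported $\varphi$ on $\tilde X$ with $\sum_g g\varphi=1$). It then introduces auxiliary operators $L_1:H_\Gamma\to H$, $R_{\delta_e}:H\to H_\Gamma$, and $R_1$ on the tensor module, and shows the algebraic identity
\[
\Psi_{\av}(\hat T\otimes_{\pi_{\av}} Id)\Psi_{\av}^{-1}-T \;=\; \Psi'_{\av}\,R_1\,[\hat T' L_\xi - L_\xi(T\otimes 1)]\,R_{\delta_e},
\]
so the difference factors through the compact operator $\hat T'L_\xi-L_\xi(T\otimes 1)$ coming straight from the definition of a lift. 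Thus the paper never invokes the $D^*_H(X)$ condition on $T$ directly; it only uses the lift property.

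Your approach, by contrast, first disposes of the choice of lift (via $\sum_i L_{\xi_i}L_{\xi_i}^*=\mathrm{Id}$ and the stability of compacts under $\otimes_{\pi_{\av}}$), then computes the canonical lift explicitly as $\sum_j \phi_j T\phi_j$ under $\Psi_{\av}$, and finally uses the commutator condition $[T,\phi_j]\in K(H)$ directly. This is more hands-on and makes the role of the partition of unity very transparent; the paper's argument is more structural and highlights that the compactness ultimately comes from the single defining diagram of a lift rather than from a specific commutator. Both arguments rely on the same background fact (that $C^*\Gamma$-compacts induce Hilbert-space compacts under $\pi_{\av}$), which the paper also leaves as an exercise.
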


\begin{proof}
Set $
{\hat T}':=\Psi^{-1} {\hat T}  \Psi : \maE_{X, \Gamma}\otimes_{C(X)\otimes C^*\Gamma} (H\otimes C^*\Gamma) \longrightarrow \maE_{X, \Gamma}\otimes_{C(X)\otimes C^*\Gamma} (H\otimes C^*\Gamma).
$
We prove more precisely that there exists $\xi \in C(\tilde{X}, C^*\Gamma)^\Gamma$ and operators $A$ and $B$ such that
$$
\Psi_{\av}(\hat{T}\otimes_{\pi_{\av}}Id)  \Psi^{-1}_{\av} -  T = A \left[ {\hat T}'  L_\xi  -  L_\xi   (T\otimes 1)\right]  B.
$$
This will allow us to conclude using the definition of a lift.

For any $\xi \in C(\tilde{X}, C^*\Gamma)^\Gamma$, we first  define $\pi_{\av}(\xi) \in C(X)\simeq C(\tX)^\Gamma$ as the $\Gamma$-invariant function
$$
\pi_{\av}(\xi)({\tilde x})=\pi_{\av} (\xi ({\tilde x})).
$$
So in particular, when $\xi\in C(\tilde{X}, \C\Gamma)^\Gamma$,  then $\pi_{\av}(\xi)(x) = \sum_{g\in \Gamma} \xi (\tilde{x}, g)$.

Denote in analogy with the case $H=L^2(X, S)$ the action of $C(X)$ on $H$ by $M_{\bullet}$ and set $\Psi'_{\av}:=\Psi_{\av}  (\Psi\otimes_{\pi_{\av}} 1)$.
We  introduce the following operators:
\begin{itemize}
\item $L_1:H\otimes C^*\Gamma\rightarrow H$ given  for $h\otimes \phi\in H\otimes C^*\Gamma$ by $
L_1(h\otimes \phi):= \pi_{\av}(\phi)h.$
\item $R_{\delta_e}: H\rightarrow H\otimes C^*\Gamma$ as $R_{\delta_e}(h)=h\otimes \delta_e$ for $h\in H$.
\item $R_{1}: \maE_{X, \Gamma}\otimes_{C(X)\otimes C^*\Gamma} (H\otimes C^*\Gamma)\rightarrow (\maE_{X, \Gamma}\otimes_{C(X)\otimes C^*\Gamma} (H\otimes C^*\Gamma))\otimes_{\pi_{\av}} \C$ given  for $\xi \in \maE_{X, \Gamma}, h\in H, \phi\in C^*\Gamma$ by
$$
R_1(\xi\otimes (h\otimes \phi))= \xi\otimes(h\otimes \phi)\otimes_{\pi_{\av}} 1
$$
\end{itemize}

Then straightforward verifications show that we have:
$$
L_1R_{\delta_e} = id_H, ({\hat T}' \otimes_{\pi_{\av}} 1)  R_1 = R_1 {\hat T}'
\text{ and } L_1 (T\otimes 1)  R_{\delta_e} = T,
$$
and also that $ \Psi'_{\av}  R_1 L_\xi R_{\delta_e} = M_{\pi_{\av}(\xi)}$,  for any $\xi \in C(\tilde{X}, \C\Gamma)^\Gamma$,

Hence, we compute
\begin{eqnarray*}
\Psi_{\av} (\hat{T}\otimes_{\pi_{\av}}Id)  \Psi^{-1}_{\av} M_{\pi_{\av}(\xi)} - M_{\pi_{\av}(\xi)}  T&=& \Psi_{\av} (\hat{T} \Psi \otimes Id) R_1 L_\xi  R_{\delta_e} - M_{\pi_{\av}(\xi)}  L_1  (T\otimes 1)  R_{\delta_e}\\
&=& \Psi'_{\av} R_1  \left[ {\hat T}'  L_\xi  -  L_\xi  R_{\delta_e}  L_1    (T\otimes 1)\right]  R_{\delta_e}\\
&=& \Psi'_{\av} R_1  \left[ {\hat T}'  L_\xi  -  L_\xi    (T\otimes 1)\right] R_{\delta_e}
\end{eqnarray*}
The last equality is a consequence of the relation $
\Psi'_{\av}  R_1 L_\xi R_{\delta_e} L_1 =  \Psi'_{\av} R_1 L_\xi,$
which in turn is a consequence of the identity $\pi_{\av} (\xi \star \phi) = (\pi_{\av}\phi) (\pi_{\av}\xi)$.

Since ${\hat T}'  L_\xi  -  L_\xi    (T\otimes 1)$ is a $C^*\Gamma$-compact operator between the Hilbert modules  $H_\Gamma$ and $\maE_{S, \Gamma}$, we leave it as an exercise to check that then the operator
$$
 \Psi'_{\av} R_1  \left[ {\hat T}'  L_\xi  -  L_\xi    (T\otimes 1)\right]  R_{\delta_e}
$$
belongs to $K(H)$. The proof is complete if we notice that there exists $\xi\in \maE_{X, \Gamma}$ such that $\pi_{\av}(\xi)=1$. Indeed due to the properness of the action of $\Gamma$ on $\tX$, there exists a smooth compactly supported function $\varphi$ on $\tX$ such that $\sum_{g\in \Gamma} g\varphi = 1$. Hence if we set
$$
\xi := \sum_{g\in \Gamma} g\varphi\otimes \delta_g, \quad i.e. \;\; \xi ({\tilde x}, g) := \xi ({\tilde x} g),
$$
then $\xi \in C(\tX, \C\Gamma)$ and it is $\Gamma$-equivariant. Moreover, we have   $\pi_{\av}(\xi)=1$.

\end{proof}

\begin{proposition}
\label{SES1}\
\begin{enumerate}
\item $D^*_{(2)}(X, H)$ is a $C^*$-algebra.
\item  There is a short exact sequence of $C^*$-algebras
$$
0\to \maK(\maM_{X, H},\tau)\oplus K(H)\stackrel{i}{\longrightarrow}  D^*_{(2)}(X, H)\stackrel{p}{\longrightarrow} Q^*_H(X, H)\to 0.
$$
\end{enumerate}
\end{proposition}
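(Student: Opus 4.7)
The plan is to verify the algebraic/topological structure first, then assemble the short exact sequence by pulling back compactness from the $C^*\Gamma$-module side to the semifinite side via the regular representation.

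\textbf{Step 1: $D^*_{(2)}(X,H)$ is a $C^*$-algebra.} First, I would check that the defining condition $\Psi_{\reg}(\hat T_2 \otimes_{\pi_{\reg}} \id)\Psi_{\reg}^{-1} - T_1 \in \maK(\maM_{X,H},\tau)$ is independent of the choice of lift $\hat T_2$. If $\hat T_2$ and $\hat T_2'$ are two lifts of $T_2$, then by Definition \ref{lift} their difference is a $C^*\Gamma$-compact operator on $\maE_{S,\Gamma}$, and the key technical ingredient is that the map $A\mapsto \Psi_{\reg}(A\otimes_{\pi_{\reg}}\id)\Psi_{\reg}^{-1}$ sends $\maK(\maE_{S,\Gamma})$ into $\maK(\maM_{X,H},\tau)$ — this is the semifinite analogue of the standard translation between $C^*\Gamma$-compactness and $\tau$-compactness (see Appendix \ref{Compact}, and it reduces to the case of rank-one operators $\theta_{\xi,\eta}$, whose image is visibly of trace-class type with respect to $\tau$). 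Once well-definedness is established, closure under sum, product, adjoint, and uniform limits is routine: algebraic operations respect both the direct-sum structure and the $\tau$-compact ideal, and uniform closure follows because $\maK(\maM_{X,H},\tau)$ is closed in $\maM_{X,H}$ and the ideal $\maK(\maE_{S,\Gamma})$ is closed in $\maL(\maE_{S,\Gamma})$; hence the quotient condition ``$T_1 \equiv \Psi_{\reg}(\hat T_2\otimes\id)\Psi_{\reg}^{-1}$ modulo $\tau$-compacts'' passes to limits.

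\textbf{Step 2: Define the maps $i$ and $p$ and verify they are $*$-homomorphisms.} The inclusion $i(K_1,K_2) := (K_1,K_2)$ is well-defined inside $D^*_{(2)}(X,H)$: indeed $K_2\in K(H)\subset D^*_H(X)$ admits a $C^*\Gamma$-compact lift (take $\hat K_2 = \sum_i L_{\xi_i}(K_2\otimes I)L_{\xi_i}^*$ as in Lemma \ref{canon}, noting $K_2\otimes I$ is a norm limit of rank-ones against the frame $\{\xi_i\}$), so $\Psi_{\reg}(\hat K_2 \otimes_{\pi_{\reg}}\id)\Psi_{\reg}^{-1}$ is $\tau$-compact, and subtracting the $\tau$-compact $K_1$ stays inside $\maK(\maM_{X,H},\tau)$. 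The map $p(T_1,T_2):=[T_2]\in Q^*_H(X)$ is the composition of the projection onto the second factor and the quotient map $D^*_H(X)\to Q^*_H(X)$; it is clearly a $*$-homomorphism.

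\textbf{Step 3: Exactness.} Injectivity of $i$ is obvious. For surjectivity of $p$, given $T\in D^*_H(X)$ choose a lift $\hat T\in D^*_\Gamma(X,H)$ provided by Lemma \ref{canon} and set $T_1 := \Psi_{\reg}(\hat T\otimes_{\pi_{\reg}}\id)\Psi_{\reg}^{-1}$. Then $T_1\in\maM_{X,H}$ because conjugation by $\Psi_{\reg}$ carries $C^*\Gamma$-module endomorphisms (tensored with the regular representation) into the commutant of the right $\Gamma$-action on $H\otimes\ell^2\Gamma$. Moreover, for $f\in C(X)$ the operator $f\otimes I$ acts on $\maE_{X,\Gamma}\otimes(H\otimes C^*\Gamma)$ via the left representation, and a direct computation using the explicit formula for $\Psi_{\reg}$ shows $\Psi_{\reg}((f\otimes I)\otimes_{\pi_{\reg}}\id)\Psi_{\reg}^{-1}= M_f\otimes I$; hence $[T_1,f] = \Psi_{\reg}([\hat T,f\otimes I]\otimes_{\pi_{\reg}}\id)\Psi_{\reg}^{-1}$ is $\tau$-compact since $[\hat T,f\otimes I]$ is $C^*\Gamma$-compact. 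Thus $(T_1,T)\in D^*_{(2)}(X,H)$ and $p(T_1,T)=[T]$. Finally, for exactness in the middle, if $p(T_1,T_2)=0$ then $T_2\in K(H)$, so as in Step 2 the operator $\Psi_{\reg}(\hat T_2\otimes_{\pi_{\reg}}\id)\Psi_{\reg}^{-1}$ is $\tau$-compact, and the compatibility condition forces $T_1\in\maK(\maM_{X,H},\tau)$; hence $(T_1,T_2)$ lies in the image of $i$.

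\textbf{Main obstacle.} The genuinely nontrivial step is the assertion underlying both well-definedness and exactness: that $\Psi_{\reg}(\cdot\otimes_{\pi_{\reg}}\id)\Psi_{\reg}^{-1}$ sends $\maK(\maE_{S,\Gamma})$ into $\maK(\maM_{X,H},\tau)$. I would prove it by reducing to the rank-one generators $\theta_{\xi,\eta}$ with $\xi,\eta$ of the form produced in Lemma \ref{canon}, computing that the resulting operator is in the $\tau$-Hilbert-Schmidt class (in particular $\tau$-compact), and invoking Appendix \ref{Compact} to conclude. Everything else is bookkeeping around the definitions and the established Higson-Roe formalism.
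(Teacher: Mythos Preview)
Your proposal is correct and follows essentially the same approach as the paper's proof, though you supply more detail. The paper's argument is terser: for part (1) it bypasses the independence-of-lift verification by working directly with the canonical lift $\hat T_2=\sum_i L_{\xi_i}(T_2\otimes I)L_{\xi_i}^*$ from Lemma~\ref{canon}, which is manifestly norm-continuous in $T_2$, so uniform closure follows immediately; for part (2) it proceeds exactly as your Step~3, without explicitly verifying that $T_1:=\Psi_{\reg}(\hat T\otimes\id)\Psi_{\reg}^{-1}$ lies in $D^*(\maM_{X,H},\tau)$ (your verification of this is correct and fills a gap the paper leaves implicit). One small inaccuracy: Appendix~\ref{Compact} computes the $K$-theory of $\maK(\maM_X,\tau)$ rather than proving that $\Psi_{\reg}(\cdot\otimes_{\pi_{\reg}}\id)\Psi_{\reg}^{-1}$ maps $\maK(\maE_{S,\Gamma})$ into $\maK(\maM_{X,H},\tau)$; your outlined reduction to rank-one operators is the right way to establish that fact, but the appendix reference is misplaced.
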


\begin{proof}\

\begin{enumerate}
\item   $D^*_{(2)}(X, H)$ is clearly a $*$-algebra. Consider a sequence of operators $(T_{1,n},T_{2,n})_{n\geq 0}\in D^*_{(2)}(X, H)$ which converges in the uniform topology to  $(T_1, T_2)$. Then we use the lift construction from Lemma (\ref{canon}) and set $ \hat{T}_{2,n}:= \sum_i L_{\xi_i} (T_{2,n}\otimes I) L^*_{\xi_i}$. Then $(\hat{T}_{2,n})_{n\geq 0}$ converges  in norm to $ \hat{T}_2:= \sum_i L_{\xi_i} (T_{2}\otimes I) L^*_{\xi_i}$. For $n\geq 0$, we have by easy verification:
$$
\Psi_{\reg}(\hat{T}_{2,n}\otimes Id) \Psi_{\reg}^{-1}- T_{1,n} \in \maK(\maM_X,\tau).
$$

This proves that $D^*_{(2)}(X, H)$ is a $C^*$-algebra.

\item It is  clear that $p\circ i=0$ and that the map $i$ is injective. Fix now  $(T_1,T_2)\in D^*_{(2)}(X, H)$ and recall that $p(T_1,T_2)=[T_2]\in Q^*_H(X).$
For any representative $T \in D^*_H(X)$ of a class $[T]\in Q^*_H(X)$ we consider some lift $\hat{T}$ of $T$ and deduce that the element $\tilde{T}=(\Psi_{\reg}(\hat{T}\otimes Id) \Psi_{\reg}^{-1}, T)$ belongs to $D^*_{(2)}(X, H)$ and satisfies $p(\tilde{T})=[T]$. Thus $p$ is surjective.
It is also clear that $p\circ i=0$.
Now, if $p(T_1,T_2)=[0] \in Q^*_H(X)$, then $T_2\in K(H)$. Since for any lift $\hat{T}_2$ of $T_2$, we have $\Psi_{\reg}(\hat{T}_2\otimes Id) \Psi_{\reg}^{-1} \in \maK(\maM_{X, H},\tau)$, this in turn implies that $T_1 \in \maK(\maM_{X, H},\tau)$.
\end{enumerate}
\end{proof}

Assume now that we have an embedding  $\iota: X\hookrightarrow Y$ of compact spaces,
then as in the discussion in subsection \ref{HRanalytic}, the map $\iota$ induces an isomorphism  $D^*_{(2)}(X, H)\cong D^*_{(2)}(Y, H)$ (where, as before we have included the dependence on the Hilbert space $H$ in the notation). The following diagram commutes as a consequence of Proposition \ref{HMisoreg}:

\[
\begin{CD}
\maE_{Y,\Gamma}\otimes_{C(Y)\otimes C^*\Gamma}(H\otimes C^*\Gamma)\otimes_{\pi_{\reg}}\ell^2(\Gamma)
@> \Psi_{X,Y}\otimes Id >>
  \maE_{X,\Gamma}\otimes_{C(X)\otimes C^*\Gamma}(H\otimes C^*\Gamma)\otimes_{\pi_{\reg}}\ell^2(\Gamma) \\
  @V \Psi^Y_{\reg} VV
@VV \Psi^X_{\reg}  V\\
H\otimes \ell^2(\Gamma)
@>\ \  =\ \
>>H\otimes \ell^2(\Gamma)
\end{CD}
\]
Therefore we have the relation
 \begin{equation}
 \label{XY}
 \Psi^Y_{\reg}(\iota_*(\hat{T}_X)\otimes Id)(\Psi^Y_{\reg})^{-1} = \Psi^X_{\reg}(\hat{T}_X\otimes Id) (\Psi^X_{\reg})^{-1}
 \end{equation}

So we get  a  well-defined isomorphism of $C^*$-algebras $(id_{\maM_{X,H}}, id_{B(H)}): D^*_{(2)}(X,H)\xrightarrow{\cong} D^*_{(2)}(Y,H)$  according to Definition \ref{D2} and using Equation \ref{XY}.
Observe also that if $H'$ is a second representation of $C(Y)$ which is ample, then the inclusion $H\hookrightarrow H\oplus H'$ implies that the von Neumann algebra $\maM_{Y,H}:= B(H)\otimes \maN\Gamma$ embeds as a corner in the von Neumann algebra $\maM_{Y, H\oplus H'}:= B(H\oplus H')\otimes \maN\Gamma$, and thus induces a map $D^*_{(2)}(Y, H)\hookrightarrow D^*_{(2)}(Y, H\oplus H')$. Using this inclusion of $C^*$-algebras, the composition map $D^*_{(2)}(X,H)\xrightarrow{\cong} D^*_{(2)}(Y,H)\hookrightarrow D^*_{(2)}(Y,H\oplus H')$ is again our required functoriality map.
\begin{definition}
Define the $\ell^2$-analytic structure group of $X$ as $
\maS_1^{(2)}(X):= \maS_1^{(2)}(X, H) :=K_0(D^*_{(2)}(X, H)),$ for any ample representation $H$.
\end{definition}

\begin{remark}
We note that the $K$-theory groups $S^{(2)}_1(Y, \bullet)$ are again independent of the choice of ample representation up to isomorphism, so with the construction given above we end up with $i_*: S^{(2)}_1(X)\rightarrow S^{(2)}_1(Y)$.
\end{remark}

We are now in position to define the analytic $\ell^2$ structure group $\maS_1^{(2)}(\Gamma)$ associated with $\Gamma$.
\begin{definition}\
$\maS_1^{(2)}(\Gamma):= \underset{X\subset B\Gamma}{\varinjlim} \maS_1^{(2)}(X),$
where the direct limit is taken over finite CW subcomplexes $X$ of $B\Gamma$ with ample representations.
\end{definition}

\subsection{Compatibility of the exact sequences}

Let us give some remarks on the relation between the $C^*$-algebras $D^*_{\Gamma}(X, H)$ and $D^*_{(2)}(X, H)$. For $\hat{T}\in D^*_\Gamma(X, H)$, we set
$$
\alpha(\hat{T}):= \left(\Psi_{\reg}(\hat{T}\otimes_{\reg}Id)\Psi^{-1}_{\reg},\Psi_{\av}(\hat{T}\otimes_{\av}Id)\Psi^{-1}_{\av}\right) \quad \in D^*_{(2)}(X, H).
$$
If $T$ is a compact operator on the Hilbert space $H$, any  lift $\hat{T}$ of $T$ is a compact operator on the Hilbert module $\maE_{H,\Gamma}$ and thus the element $\alpha(\hat{T})$ belongs to $\maK(\maM_{X, H} ,\tau)\oplus K(H)$. The Hilbert module $\maE_{H,\Gamma}$ is defined exactly as $\maE_{S,\Gamma}$ replacing everywhere $L^2(X, S)$ by the Hilbert space $H$. Therefore the following diagram commutes:

\hspace{2cm}
\begin{equation}\label{SES_HR_BR}
\xymatrix{
0\to  \maK(\maE_{H,\Gamma}) \ar[r]^{}\ar[d]^{\alpha} &D^*_\Gamma(X, H)\ar[d]^{\alpha}\ar[r]_{} &  Q^*_\Gamma(X, H) \ar[d]^{\cong} \to 0\\
0\to  \maK(\maM_{X, H},\tau)\oplus K(H) \ar[r]^{}& D^*_{(2)}(X, H) \ar[r]_{} &  Q^*_H(X) \to 0
}
\end{equation}
Since $\hat{T}$ is unique up to compact operators there is an isomorphism
$Q^*_\Gamma(X, H)\cong Q^*_H(X)$. Morever, $C^*\Gamma$ is Morita-equivalent to
$\maK(\maE_{H,\Gamma})$ and therefore their $K$-theory groups are isomorphic. Hence, as in the type I case,  the diagram \eqref{SES_HR_BR} induces a commutative diagram between the long exact sequences in $K$-theory. In particular, the following commutes:


\vspace{0,2cm}
\begin{equation}\label{alpha}
\hspace{-0,45cm}
\begin{CD}
K_0(X) @>{\mu_\Gamma}>>  K_0(C^*\Gamma) @>>>  S_{1,\Gamma}(X) @>>> K_1(X)  @>>> K_1(C^*\Gamma) \\
@VV{=}V  @V{\alpha_*}VV                          @V{\alpha_*}VV       @V{=}VV                      \\
K_0(X) @>{\partial}>>  K_0(\maK(\maM_{X,H},\tau)\oplus K(H)) @>>>  S_{1}^{(2)}(X) @>>> K_1(X)  @>>> 0 \\
\end{CD}
\end{equation}

The map $\alpha_*$ is induced by the map $\alpha$. Moreover, the traces induce additive maps
$$
\tau_*: K_0(\maK (\maM_{X, H}, \tau)) \longrightarrow \R \text{ and } \Tr_*:K_0 (\maK (H))\longrightarrow \Z.
$$
The map $\Tr_*$ is a group isomorphism and so is the first additive map $\tau_*$. The latter statement is standard and we have given some details in  the appendix, see Lemma \ref{K}. On the other hand, the regular and average traces, $\tau_{\reg}$ and $\tau_{\av}$,  on the group $C^*$-algebra $C^*\Gamma$ also induce additive maps
$$
\tau_{reg, *} : K_0(C^*\Gamma) \longrightarrow \R \text{ and } \tau_{av, *} : K_0(C^*\Gamma) \longrightarrow \Z.
$$
We point out as an easy consequence of the Atiyah-Singer $\ell^2$ index theorem for Galois coverings that the range of $\tau_{reg, *}$ restricted to the Baum-Connes assembly map (and hence also to the Kasparov assembly map) coincides with the group $\Z$ of the integers. On the other hand, if we use the Morita equivalence, then  the following compatibility relation always holds:
$$
(\tau_*, \Tr_*)  \alpha_* = (\tau_{reg, *}, \tau_{av, *}).
$$

\begin{definition}
We denote by $\Lambda$ the image of the additive map $\tau_{reg, *}: K_0(C^*\Gamma) \longrightarrow \R$. We define the morphism
$\delta: \R\oplus \Z \longrightarrow \R$ by $\delta (x,n):=x-n$. So, there is an exact sequence of abelian groups
$$
0\to \Z \longrightarrow \Lambda\oplus \Z \stackrel{\delta}{\longrightarrow} \Lambda \to 0,
$$
where $\Z\hookrightarrow \Lambda\oplus \Z$ is the diagonal map.
\end{definition}

\begin{remark}
Notice that $\Z\subset \Lambda$ and if the $K_0$ Baum-Connes map is surjective, then $\Lambda = \Z$.
\end{remark}

As a consequence of the above definitions and the commutativity of the diagram (\ref{alpha}), by passing to the direct limits we obtain the following commutative diagram:

\vspace{0,2cm}
\begin{equation}\label{beta}
\hspace{-0,47cm}
\begin{CD}
K_0(B\Gamma) @>{\mu_\Gamma}>>  K_0(C^*\Gamma) @>>>  \maS_{1}(\Gamma) @>>> K_1(B\Gamma)  @>>> K_1(C^*\Gamma) \\
@V{=}VV  @V{(\tau_*,\Tr_*)\circ\alpha_*}VV                          @V{\alpha_*}VV       @V{=}VV                         \\
 K_0(B\Gamma) @>{\partial}>>  \R\oplus \Z @>>>  \maS_{1}^{(2)}(\Gamma) @>>> K_1(B\Gamma)  @>>> 0  \\
\end{CD}
\end{equation}
\vspace{1cm}

\begin{lemma}
Assume that $(e_1,e_2)$ and $(f_1, f_2)$ are idempotents from $\maK(\maM_{X, H}, \tau)\oplus \maK(H)$ such that
$$
\tau
(e_1) - \tau
(f_1) =  \Tr (e_2)- \Tr (f_2).
$$
Then the image of the class $[e_1,e_2] - [f_1, f_2]$ in $\maS_1^{(2)}(\Gamma)$ is trivial.
\end{lemma}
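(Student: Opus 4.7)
The plan is to reduce the claim to exactness of the bottom row of diagram~(\ref{beta}) at $\R\oplus\Z$, after identifying the $K_0$ group of the ideal with $\R\oplus\Z$ via the traces.

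First, I would recall (from Appendix~\ref{Compact} via Lemma~\ref{K} and the standard identification $K_0(\maK(H))\simeq\Z$ through the ordinary trace) that the additive map
\[
(\tau_*,\Tr_*)\ :\ K_0\bigl(\maK(\maM_{X,H},\tau)\oplus\maK(H)\bigr)\ \stackrel{\simeq}{\longrightarrow}\ \R\oplus\Z
\]
is an isomorphism. For projections, it sends $[e_1,e_2]\mapsto(\tau(e_1),\Tr(e_2))$. Hence the element $[e_1,e_2]-[f_1,f_2]$ corresponds, under $(\tau_*,\Tr_*)$, to the pair
\[
\bigl(\tau(e_1)-\tau(f_1),\ \Tr(e_2)-\Tr(f_2)\bigr)=(n,n),\qquad n:=\Tr(e_2)-\Tr(f_2)\in\Z,
\]
by hypothesis. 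So its class lies in the diagonal copy of $\Z$ inside $\R\oplus\Z$.

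Next I would show that this diagonal $\Z$ is contained in the image of the connecting map $\partial:K_0(B\Gamma)\to\R\oplus\Z$ of diagram~(\ref{beta}). Choose any point $\mathrm{pt}\in B\Gamma$ (we may assume $X\subset B\Gamma$ contains a point) and let $[\mathrm{pt}]\in K_0(B\Gamma)$ be the associated class. Then $\mu_\Gamma[\mathrm{pt}]=[1_{C^*\Gamma}]\in K_0(C^*\Gamma)$. Applying the compatibility $(\tau_*,\Tr_*)\circ\alpha_*=(\tau_{\reg,*},\tau_{\av,*})$ recalled just before the statement, we get
\[
\partial[\mathrm{pt}]=(\tau_{\reg,*},\tau_{\av,*})[1_{C^*\Gamma}]=(1,1)\in\R\oplus\Z,
\]
where the first coordinate equals $1$ by (the trivial case of) Atiyah's $\ell^2$ index theorem, and the second coordinate is the ordinary index, both equal to $1$. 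Multiplying by $n$, we obtain $(n,n)=\partial\bigl(n\cdot[\mathrm{pt}]\bigr)$.

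Finally, by exactness of the lower row of diagram~(\ref{beta}) at $\R\oplus\Z$, every element of $\mathrm{Im}(\partial)$ is killed by the next map $\R\oplus\Z\to\maS_1^{(2)}(\Gamma)$. Thus $[e_1,e_2]-[f_1,f_2]$ vanishes in $\maS_1^{(2)}(\Gamma)$. The only mildly non-routine step is the identification $\partial[\mathrm{pt}]=(1,1)$, but this is exactly the content of Atiyah's $\ell^2$ index theorem applied to the trivial $0$-dimensional class; everything else is formal diagram chasing in~(\ref{beta}).
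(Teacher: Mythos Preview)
Your argument is correct and follows essentially the same route as the paper's proof: identify the class with the diagonal element $(n,n)\in\R\oplus\Z$ via $(\tau_*,\Tr_*)$, show it lies in the image of $K_0(B\Gamma)$, and conclude by exactness. The only differences are cosmetic: you exhibit the preimage explicitly as $n\cdot[\mathrm{pt}]$ and invoke exactness of the bottom row of~(\ref{beta}) directly, whereas the paper asserts existence of a preimage $u\in K_0(B\Gamma)$ (implicitly via Atiyah's theorem, as noted just before the lemma) and then routes the vanishing through the top Higson--Roe sequence together with $\alpha_*$. Your computation $\partial[\mathrm{pt}]=(1,1)$ does not really need Atiyah's theorem---it is just $\tau_{\reg}(1_{C^*\Gamma})=1=\tau_{\av}(1_{C^*\Gamma})$---but this is harmless.
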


\begin{proof}
The class $([e_1]-[f_1], [e_2] - [f_2])\in K_0(\maK(\maM_{X, H}, \tau
)\oplus \maK(H))$ is thus identified, through the isomorphism $(\tau_{*}, \Tr_*)$,  with an element $(N,N)$ of the diagonal in $\R\oplus \Z$. Therefore, using the commutative diagram \eqref{alpha}, we deduce that there exists $u\in K_0(B\Gamma)$ such that
$$
([e_1]-[f_1], [e_2] - [f_2]) = (\alpha_*\circ \mu_\Gamma) (u).
$$
Therefore, exactness of the Higson-Roe sequence implies that the image of $([e_1]-[f_1], [e_2] - [f_2])$ in $\maS_1^{(2)}(\Gamma)$ is trivial.
\end{proof}

\begin{corollary}
For any $(x, n)\in \R\oplus \Z$, the class in $\maS_1^{(2)}(\Gamma)$ of any pair $(e_1, e_2)\in \maK(\maM_{X, H}, \tau
)\oplus \maK(H)$ satisfying $
\tau
(e_1)=x$ and $\Tr (e_2) = n$
only depends on  the difference $\delta (x,n):=x-n$.
\end{corollary}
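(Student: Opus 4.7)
The corollary is essentially an immediate consequence of the preceding lemma, so the proof plan is very short: all the work is in observing that the numerical hypothesis of the lemma is equivalent to the equality of the differences $x-n$.

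The plan is as follows. Take two pairs $(e_1,e_2)$ and $(f_1,f_2)$ in $\maK(\maM_{X,H},\tau)\oplus\maK(H)$ such that
\[
\tau(e_1)=x,\ \Tr(e_2)=n,\qquad \tau(f_1)=x',\ \Tr(f_2)=n',
\]
with $\delta(x,n)=\delta(x',n')$, i.e.\ $x-n=x'-n'$. Rearranging this identity gives
\[
\tau(e_1)-\tau(f_1)=x-x'=n-n'=\Tr(e_2)-\Tr(f_2),
\]
which is precisely the hypothesis of the previous lemma. Applying that lemma, the image of $[e_1,e_2]-[f_1,f_2]$ in $\maS_1^{(2)}(\Gamma)$ vanishes, so the two pairs define the same class. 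Hence the class depends only on the value $x-n=\delta(x,n)$, as claimed.

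There is no genuine obstacle: the content is entirely in the previous lemma (which itself uses the commutative diagram \eqref{alpha}, the isomorphism $(\tau_*,\Tr_*):K_0(\maK(\maM_{X,H},\tau)\oplus\maK(H))\xrightarrow{\cong}\R\oplus\Z$, and exactness of the Higson--Roe sequence to absorb diagonal classes coming from the assembly map). The corollary simply repackages that statement by noticing that the kernel of $\delta:\R\oplus\Z\to\R$ is exactly the diagonal copy of $\Z$ identified in the earlier short exact sequence $0\to\Z\to\Lambda\oplus\Z\xrightarrow{\delta}\Lambda\to 0$, which is the subgroup killed in passing from $K_0(\maK(\maM_{X,H},\tau)\oplus\maK(H))$ to $\maS_1^{(2)}(\Gamma)$.
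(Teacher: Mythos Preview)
Your proposal is correct and matches the paper's intent exactly: the corollary is an immediate consequence of the preceding lemma, obtained precisely by rewriting the condition $x-n=x'-n'$ as $\tau(e_1)-\tau(f_1)=\Tr(e_2)-\Tr(f_2)$.
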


\begin{definition}
For $x\in \R$, we define its class $[x] \in K_0(D^*_{(2)}(X, H))$ as the $K$-theory class  $[(e_1,e_2)] $ of any pair of projections $(e_1, e_2)\in  \maK(\maM_{X,H}, \tau
)\oplus \maK(H)$ such that
$$
\tau
(e_1)-\Tr(e_2)=x \hspace{2cm} (*)
$$
So, we have the equality $[x,n]= [x-n]$ in $\maS_1^{(2)} (X)$.
\end{definition}

Recall that the trivial representation of $\Gamma$ induces the index morphism $K_0(C^*\Gamma)\to \Z$. Composing with the assembly map $\mu_\Gamma$ we get the group morphism $\Ind_{B\Gamma}: K_0(B\Gamma)   \to \Z$ which corresponds roughly to the pairing with the trivial line bundle.

\begin{corollary}
\label{SES2}
The maps $(x, n) \mapsto [x,n]$ and $x\mapsto [x]$ fit into the commutative diagram
\vspace{0,2cm}
\hspace{0,5cm}
\begin{equation}\label{SS}
\begin{CD}
\cdots  @>>> K_0(B\Gamma)   @>{\mu_\Gamma}>> K_0(C^*\Gamma)@>>>   \maS_1 (\Gamma)  @>>> K_1(B\Gamma) @>{\mu_\Gamma}>>  K_1(C^*\Gamma)\cdots \\
  @.    @V{\Ind_{B\Gamma}}VV@V{(\tau_{*},\Tr_{*})}V{\circ \alpha_*}V  @V{\alpha_*}VV @V{=}VV\\
0 @>>> \mathbb{Z} @>>> \R\oplus \Z @>>> \maS_1^{(2)}(\Gamma) @>>>  K_1(B\Gamma)@>>>  0\hspace{1cm}\\
@. @VVV @V\delta VV @V{=}VV          @V{=}VV  @.\\
   @. 0 @>>> \R @>>>  \maS_1^{(2)}(\Gamma) @>>>  K_1(B\Gamma)@>>>  0\hspace{1cm}\\
\end{CD}
\end{equation}
\vspace{0,5cm}
 In particular, we have the following short exact sequence of abelian groups
 $$
0 \to \R \longrightarrow \maS_1^{(2)} (\Gamma) \longrightarrow K_1(B\Gamma) \to 0
$$
\end{corollary}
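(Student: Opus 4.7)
The plan is to derive the short exact sequence as a purely diagrammatic consequence of \eqref{beta}, by quotienting the middle row by the diagonal image of $K_0(B\Gamma)$ inside $\R\oplus \Z$.

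First, I would make the leftmost composite explicit. Using the identity $(\tau_*,\Tr_*)\circ \alpha_* = (\tau_{\reg,*},\tau_{\av,*})$ established just before the definition of $\Lambda$, the composition $(\tau_*,\Tr_*)\circ \alpha_*\circ \mu_\Gamma$ equals $(\tau_{\reg,*}\circ \mu_\Gamma,\tau_{\av,*}\circ \mu_\Gamma)$. Atiyah's $\ell^2$ index theorem for Galois coverings identifies $\tau_{\reg,*}\circ \mu_\Gamma$ with the ordinary index map $\Ind_{B\Gamma}: K_0(B\Gamma)\to \Z\subset \R$, and the interpretation of $\tau_{\av}$ as evaluation at the trivial representation gives $\tau_{\av,*}\circ \mu_\Gamma=\Ind_{B\Gamma}$ as well. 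Consequently $\partial: K_0(B\Gamma)\to \R\oplus\Z$ factors through the diagonal embedding $\Z\hookrightarrow \R\oplus\Z$ as $u\mapsto(\Ind_{B\Gamma}(u),\Ind_{B\Gamma}(u))$, which yields the commutativity of the leftmost square of \eqref{SS} connecting the top row to the middle row.

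Next, I would compose the middle row of \eqref{SS} with the surjection $\delta:\R\oplus \Z\to \R$. Since $\ker\delta$ is the diagonal $\Z$, which by the previous step contains the image of $\partial$, the composite $\delta\circ \partial$ is zero and $\delta$ descends through the middle row. The preceding lemma and its corollary ensure that the assignment $x\mapsto [x]$ from $\R$ to $\maS_1^{(2)}(\Gamma)$ is a well defined group morphism (two projection pairs with the same trace difference define the same class in $\maS_1^{(2)}(\Gamma)$), and this is precisely the induced map on the quotient. This gives the commutativity of the lower part of \eqref{SS}.

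Finally, I would extract exactness of the bottom row from that of the middle row by an elementary chase. Surjectivity onto $K_1(B\Gamma)$ is inherited directly, since $\delta$ is surjective. For exactness at $\maS_1^{(2)}(\Gamma)$, a class that vanishes in $K_1(B\Gamma)$ lifts to some $(x,n)\in \R\oplus \Z$ by the middle row, hence to $\delta(x,n)=x-n\in \R$. For injectivity of $\R\to \maS_1^{(2)}(\Gamma)$, any $x$ with vanishing class lifts by middle-row exactness to $(x,n)$ coming from $K_0(B\Gamma)$; by the first step this lift lies in the diagonal, so $n=x\in\Z$ and $\delta(x,n)=0$. The only real obstacle is the clean identification $\tau_{\reg,*}\circ \mu_\Gamma = \Ind_{B\Gamma}$ within the present maximal framework, which follows from the Atiyah $\ell^2$ index theorem combined with the Morita equivalence used implicitly in $(\tau_*,\Tr_*)\circ \alpha_*=(\tau_{\reg,*},\tau_{\av,*})$; everything after this identification is formal.
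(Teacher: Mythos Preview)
Your proposal is correct and is precisely the argument the paper leaves implicit: the corollary is stated without proof, as an immediate consequence of diagram~\eqref{beta}, the compatibility $(\tau_*,\Tr_*)\circ\alpha_*=(\tau_{\reg,*},\tau_{\av,*})$, and Atiyah's $\ell^2$ index theorem, and your diagram chase is the natural way to extract the bottom short exact sequence from the middle row. One small remark: in your injectivity paragraph it is cleaner to say that $[x]$ is by definition the image of $(x,0)\in\R\oplus\Z$, so if $[x]=0$ then $(x,0)$ lies in $\operatorname{Im}\partial\subset\{(n,n):n\in\Z\}$, forcing $x=0$; your phrasing ``lifts \ldots\ to $(x,n)$'' is slightly ambiguous but the content is right.
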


\section{Geometric structures and eta invariants}\label{geometric}

In this section we define an $\ell^2$ geometric  structure group, denoted $\maS_1^{(2), geo}(\Gamma)$. We then prove that this group is  isomorphic  to the $\ell^2$ analytic structure group $\maS_1^{(2)} (\Gamma)$ defined in the previous section. This allows us to show that  the relative Cheeger-Gromov eta invariant belongs to the range of  a group morphism from the Higson-Roe structure group $\maS_1 (\Gamma)$ to the reals.

\subsection{Review of Eta invariants and APS formulae}

Let us briefly recall the definition of the eta invariant and the $\ell^2$ eta invariant, as well as the Atiyah-Patodi-Singer theorem for even dimensional manifolds with boundary.
Given a generalized Dirac operator $D$  on a closed
oriented manifold $Y$ of dimension $2m-1$,
the {\em eta-function} of $D$ was defined in \cite{APS2} as
$$
\eta(s,D):=\Tr'(D|D|^{-s-1}),
$$
where $\Tr'$ stands for the trace restricted to the subspace orthogonal to
$\Ker(D)$. By \cite{APS1, APS2, APS3}, $\eta(s,A)$
is holomorphic when $\Re(s)>2m-1$ and can
be extended meromorphically to the entire complex plane with possible simple
poles only.
The eta function is then
known to be holomorphic  at $s=0$  \cite{APS3}.
The {\em eta-invariant}
of $A$ is then defined as $
\eta(A) = \eta(0, A).$
The eta invariant is related
to the heat kernel by a Mellin transform. More precisely, it is known that the integral  $
\frac{1}{\sqrt{\pi}}\int_0^\infty \Tr(De^{-t{D}^2})\,dt/{\sqrt t}$ is absolutely convergent to $\eta (D)$.

Given a compact manifold $\hM$ with boundary $M$ together with an elliptic generalized Dirac operator $\hD:C^\infty (E^0)\to C^\infty (E^1)$ and assuming that all structures are product-type near the boundary, Atiyah-Patodi-Singer proved a deep index theorem for global BVP. More precisely, assume that in a collar neighborhood of the boundary the operator $\hD$ has the form $\hD=\sigma (\partial_u + D)$, where $\partial_u$ is the inward normal vector field and $\sigma$ is a bundle isomorphism, then imposing the so-called global APS boundary condition, the resulting operator $\hD_{APS}$ has a well defined Fredholm index  and the APS formula computes this index as
$$
\Ind (\hD_{APS}) = \int_{\hM} \alpha_0(\hD)  - \frac{h+\eta (D)}{2},
$$
where $\alpha_0(\hD)$ is a local closed differential form and $h=\dim (\Ker D)$. We recall that the operator $\hD_{APS}$ is the operator $\hD$ acting from $\Dom (\hD_{APS})$ into  $L^2 (\hM, E^1)$, where
$$
\Dom (\hD_{APS})=\{u\in L^2 (\hM, E^0)\text{ such that } \hD (u)\in L^2 (\hM, E^1) \text{ and } \chi_{\geq} (D) (u\vert_M) = 0\}.
$$
Here and in the whole paper $\chi_{\geq}$ is the characteristic function of $[0, +\infty)$.

The Mellin expression of the eta invariant allowed Cheeger and Gromov to introduce the $\ell^2$ eta invariant for Galois coverings, see  \cite{CheegerGromov}. More precisely, given a Galois cover $\tM\to M$ over the closed odd dimensional manifold $M$, and a  $\Gamma$-invariant generalized Dirac operator $\tD: C^\infty (\tE^0) \to C^\infty (\tE^1)$ on $\tM$, the similar Mellin integral but using Atiyah's $\ell^2$ trace, is  absolutely convergent to a real number called the $\ell^2$-eta invariant of $\tD$, i.e. \cite{CheegerGromov}
$$
\eta_{(2)} (\tD) := \frac{1}{\sqrt{\pi}} \int_0^\infty \tau
 (\tD e^{-t{\tD}^2})\,dt/{\sqrt t} \quad\text{ is well defined.}
$$
The APS index theorem was then extended to Galois coverings in the PhD thesis of Ramachandran \cite{Ramachandran}. Assume that $\thM\to \hM$ is a Galois covering with boundary the Galois covering $\tM\to M$ and with obvious notations, one gets the similar formula under the similar assumptions (see \cite{Ramachandran}):
$$
\Ind_{(2)}(\thD_{APS}) = \int_{\hM} \alpha_0(\hD)  - \frac{{\tilde h}+\eta_{(2)} (\tD)}{2},
$$
with $\tilde h=\dim_{(2)}(\Ker \tD)$ is the $\ell^2$ dimension of the $\Gamma$-representation $\Ker \tD$.\\

For the reader's convenience we recall here the definition of a Clifford bundle  which enters in the definition of the version of geometric $K$-homology with oriented cycles (cf. \cite{KeswaniControlledPaths}, \cite{Guentner}, \cite{HigsonRoe2010}).

\begin{definition}
 Let $M$ be a closed orientable Riemannian manifold. A \emph{Clifford bundle} on $M$ is a smooth complex vector bundle $S$ endowed with a Hermitian metric such that
\begin{enumerate}
\item each fiber $S_x$ for $x\in M$ is a Clifford module of $T^*_xM$ (which is $\Z_2$-graded when $M$ is even dimensional), i.e. there is a linear map $c: T^*_xM\rightarrow \End(S_x)$ such that $c(\xi)^*=-c(\xi)$ and $c(\xi)^2=-|\xi|^2$, for all $\xi\in T^*_xM$.  We assume as it is customary that $c(\xi)$ is odd for the grading in the even dimensional case.
\item $S$ also carries a connection $\nabla^S:C^\infty(M, S)\rightarrow C^\infty(M,T^*M\otimes S)$ which is compatible with the grading on $S$ (in the even dimensional case) and with the Levi-Civita connection $\nabla$ on $T^*M$ in the following sense:
    $$
    \nabla_X^S(c(\omega) s)=c(\nabla_X\omega) s+ c(\omega) \nabla_X^Ss \quad \text{ for }s\in C^\infty(M,S), \omega\in C^\infty(M,T^*M) \text{ and } X\in C^\infty(M,TM).
    $$
\end{enumerate}

\end{definition}

As usual, the map $c$ will sometimes be referred to as a Clifford action.

\begin{remark}\label{Dirac}
 The above definition allows one to define a ``generalized" Dirac operator $D$ acting on smooth sections of $S$. In the even-dimensional case $D$ interchanges the even and odd-degree sections of $S$.\\
\end{remark}

\subsection{The geometric $\ell^2$ structure group}\label{Structure}

We first introduce the cycles for our $\ell^2$ geometric group. Recall from \cite{HigsonRoe2010},  that an odd geometric cycle for the classifying space $B\Gamma$ is a triple $(M,S,f)$, where $M$ is a smooth, oriented, closed Riemannian manifold  whose all connected components have odd dimension, $S$ is a Clifford bundle over $M$ and $f:M\to B\Gamma$ is a (homotopy class of a) continuous map. Notice that there is a modification of the geometric $K$-homology of Baum-Douglas, where the cycles are the Higson-Roe cycles \cite{KeswaniControlledPaths}, but this will not be used here. See also  \cite{BaumHigsonSchick} for more on the geometric Baum-Douglas $K$-homology groups.  By classical arguments, we may assume that the map $f$  is  classifying for a smooth Galois $\Gamma$-cover over $M$, whose total space  will be denoted in the sequel of this section by $\tM$, so without reference to (the homotopy class of) $f$. More generally, the lifts to the cover $\tM$ of objects on $M$ will be emphasized using a tilde.

As mentioned in Remark \ref{Dirac} above, associated with the Clifford bundle $S$ (called Dirac bundle in \cite{HigsonRoe2010}), there is a ``generalized'' Dirac operator $D$ on $M$ and its lift to $\tM$ that is denoted $\tD$, see for instance \cite{LawsonMichelson}. This is an elliptic first order differential operator $D$ whose commutator with the multiplication by a function $\varphi\in C^\infty (M)$ is the zero-th order differential operator which is Clifford multiplication in $S$ by the differential $1$-form $d\varphi$. Notice then that the same holds on the covering $\tM$ where $\varphi$ acts as a $\Gamma$-invariant operator as usual by multiplication. Notice also that such generalized Dirac operator satisfies the unique continuation property \cite{BW}.

More generally, we shall consider triples $({\hat M}, {\hat S}, {\hat f})$ for $B\Gamma$ where we now allow in addition ${\hat M}$ to be a compact manifold with boundary, the structures being compatible near the boundary \cite{APS1}. These latter triples can be roughly called geometric chains for $B\Gamma$. We denote  by $\Ind^{APS} (D_{\hat M} )$ the Atiyah-Patodi-Singer  index of the $L^2$ Fredholm operator induced by the Dirac operator $D_{\hat M}$ with the global boundary condition given by the $0$-th order pseudo differential operator which is the Szego projection of the boundary Dirac operator $\chi_{\geq} (D_M)$, see again \cite{APS1}. In the same way, on the Galois cover $\widetilde{\hat M}\to {\hat M}$, associated with ${\hat f}$, we consider the $\ell^2$ APS index $\Ind^{APS}_{(2)} ({\widetilde{D_{\hat M}}})$ using the similar global boundary condition, see \cite{Ramachandran} for the precise constructions.

\begin{definition}
A \textit{geometric cycle} is (an isomorphism class of) a 5-tuple $(M,S,f,D, x)$, where
\begin{enumerate}
\item $M$ is an odd dimensional smooth, oriented, closed Riemannian manifold with a continuous map $f:M\to B\Gamma$ and a Clifford bundle $S\to M$;
\item $D$ is a (generalized) Dirac operator associated with the Clifford bundle $S$;
\item $x$ is a real number.
\end{enumerate}
\end{definition}

We now introduce the bordism relation. We make the usual assumptions of product structures near the boundary and we identify the Dirac operator in a collar  neighborhood of the boundary with the usual matrix.

\begin{definition}\label{Boundary}
A geometric cycle is   a boundary if there exists an even geometric chain $({\hat M}, {\hat S}, {\hat f})$ for $B\Gamma$ whose boundary is $(M,S,f)$ and a Dirac operator $D_{\hat M}$  whose boundary is $D$,  such that the following relation holds:
$$
\Ind^{APS}_{(2)} ({\widetilde{D_{\hat M}}})-\Ind^{APS} (D_{\hat M} )= x.
$$
\end{definition}

We  introduce the disjoint union of two  geometric cycles $(M,S,f,D, x)$ and $(M',S',f',D', x')$ as the  geometric cycle
$$
(M, S, f, D, x) \amalg (M', S', f', D', x') := (M\amalg M', S\amalg S',f\amalg f',D\amalg D', x+x').
$$

Given a geometric cycle $(M,S,f,D, x)$, we introduce the {\em opposite} geometric cycle, denoted  $-(M,S,f,D, x)$, by considering the usual opposite $-(M, S, f):= (M, -S, f)$ of the cycle $(M, S, f)$ corresponding to the opposite Clifford multiplication \cite{KeswaniControlledPaths}, and by taking the operator $-D$ and the real number $ -x - {\tilde h} + h$, i.e.
$$
-(M,S,f,D, x) := (M, -S, f, -D, -x+ h - {\tilde h}).
$$
where $h= \dim (\Ker (D)) \text{ and } {\tilde h}= \dim_{(2)} (\Ker (\tD))$, the $\ell^2$-dimension of the  $L^2$ kernel of $\tD$.

An important relation is the so-called bundle modification and we proceed to define it following \cite{HigsonRoe2010}.
Let $(M, S, f, D, x)$ be a geometric cycle and $SO(2k) - P \stackrel{p}{\rightarrow} M$  a principal bundle, and let $\pi: {\hat M} = P\times_{SO(2k)} {\mathbb S}^{2k}\to M$ be the associated sphere bundle with the metric composed from the $SO(2k)$ invariant metric on the fibers and the  pulled-back metric from $M$ on the horizontal vectors.  Let $D_\theta$ be the $SO(2k)$-invariant Dirac operator on ${\mathbb S}^{2k}$ whose kernel is one dimensional and concentrated in degree $0$ (and whose cokernel is trivial). Recall that the Clifford bundle $S_\theta$ that we use on ${\mathbb S}^{2k}$ is  ``half''  the complex Clifford bundle of $T{\mathbb S}^{2k}$. We denote again by $S_\theta$ the induced fiberwise Clifford bundle associated over ${\hat M}$. The Clifford bundle ${\hat S}$ on ${\hat M}$ is then taken to be the tensor product $\pi^*S\otimes S_\theta$. The operator $D_\theta$ induces a fiberwise operator denoted $I\otimes D_\theta$. More precisely, $I\otimes D_\theta$ is defined using the identification of the $L^2$ sections with the $SO(2k)$-equivariant sections as usual:
\begin{eqnarray}
\label{decomp1}
L^2 ({\hat M}, {\hat S}) \simeq \left[ L^2 (P, p^*S) \otimes L^2 (\mathbb{S}^{2k}, S_\theta)\right]^{SO(2k)}.
\end{eqnarray}
The manifold $\mathbb {S}^{2k}$ being even dimensional, there is the usual grading operator $\ep$ on $S_\theta$. Now the new Dirac operator defined on ${\hat S}\to {\hat M}$ is  the operator ${\hat D}$ corresponding in this identification to the $SO(2k)$-invariant operator $I\otimes D_\theta + p^*D\otimes \ep$, i.e.
$$
{\hat D} \simeq I\otimes D_\theta + p^*D\otimes \ep,
$$
where $p^*D$ is a well defined $SO(2k)$-invariant differential operator which induces $D$ downstairs on $M$. Notice that $p^*D$ is only transversely elliptic to the action of $SO(2k)$. The map ${\hat f}=f\circ \pi: {\hat M}\to B\Gamma$ finishes the construction of the 5-tuple $({\hat M}, {\hat S}, {\hat f}, {\hat D}, x)$ which is the bundle modification of  $(M, S, f, D, x)$.

Notice that the homotopy class of ${\hat f}$ again defines the Galois cover $\pi^*\tM=\widetilde{\hat M}$ over ${\hat M}$. The above data then all lift to the Galois cover. In particular, we have the similar identification which is now $\Gamma$-equivariant:
\begin{eqnarray}
\label{decomp2}
L^2 (\widetilde{\hat M}, \widetilde{\hat S}) \simeq \left[ L^2 (\tP, \tp^*\tS) \otimes L^2 (\mathbb{S}^{2k}, S_\theta)\right]^{SO(2k)}.
\end{eqnarray}
We also have the identification of $\Gamma$-invariant operators
$$
{\widetilde{\hat D}} \simeq I\otimes D_\theta + \tp^*\tD\otimes \ep.
$$
\begin{definition}
We introduce the three  identifications beyond isomorphism classes of geometric cycles
\begin{enumerate}
\item {\underline{Bordism:}} We identify  the geometric cycles $(M,S,f,D, x)$ and $(M',S',f',D', x')$ if they are bordant, i.e. if the disjoint union
$$
(M, S, f, D, x) \amalg  - (M', S', f', D', x')
$$
is a boundary in the sense of Definition \ref{Boundary}.
\item {\underline{Disjoint union/Direct sum:}} We identify the geometric cycles
$$
(M, S_1, f, D_1, x) \amalg (M, S_2, f, D_2, x') \text{ and } (M, S_1\oplus S_2, f, D_1\oplus D_2, x+x').
$$
\item {\underline{Bundle modification:}} If $({\hat M}, {\hat S}, {\hat f}, {\hat D})$ is a bundle modification of $(M, S, f, D)$ as above, then for any $x\in \R$,  $({\hat M}, {\hat S}, {\hat f}, {\hat D}, x)$  is identified with $(M, S, f, D, x)$.
\end{enumerate}
\end{definition}

\begin{definition}
The quotient set with respect to  the equivalence relation generated by the above three moves is denoted $\maS_1^{geo, (2)} (\Gamma)$. This is clearly an abelian group with the sum induced by disjoint union. It will be called the $\ell^2$ geometric group of $\Gamma$.
\end{definition}

Notice that  the zero element is the class of the empty manifold with the real number $x=0$ and that the opposite cycle represents the opposite element in the group. More precisely,  the APS theorem and the $\ell^2$ APS theorem of Ramachandran, applied to the cylinder associated with $(M,S,f,D)$ implies that the class of
$$
(M, S, f, D, x) \amalg -(M, S, f, D, x),
$$
bounds in the sense of our definition of bordism above.

\subsection{Geometric versus analytic}\label{geometricanalytic}

Recall the class $[x]$ of the real number $x$ in the $\ell^2$ analytic structure group $\maS_1^{(2)} (\Gamma)$. Recall also that the function $\chi_\geq$ is the characteristic function of $[0, +\infty[$ and note that for any geometric cycle $(M, S, f, D, x)$, the couple $(\chi_\geq (\tD), \chi_\geq (D))$ defines a class in $\maS_1^{(2)} (M, L^2(M, S))$ and hence an element $f_* [\chi_\geq (\tD), \chi_\geq (D)]$ of the inductive group $\maS_1^{(2)} (\Gamma)$.

\begin{definition}
We associate with any geometric cycle $(M,S,f,D, x)$ its analytic class $[M,S,f,D, x]_{an}$ in $\maS_1^{(2)} (\Gamma)$ which is defined as
$$
[M,S,f,D, x]_{an} := f_*[\chi_\geq (\tD), \chi_\geq (D)] + [x] \quad \in \maS_1^{(2)} (\Gamma).
$$
\end{definition}

We shall sometimes drop $f_*$ and denote,  when no confusion can occur, as well by $[\chi_\geq (\tD), \chi_\geq (D)]$ the class in $\maS_1^{(2)} (M, L^2(M, S))$ and its image in $\maS_1^{(2)} (\Gamma)$. Our goal in the rest of this paragraph is to prove the following

\begin{theorem}\label{analyticGeometric}
The analytic class of  a geometric cycle only depends on its class in the $\ell^2$ geometric group $\maS_1^{(2), geo}(\Gamma)$ and hence induces a well defined group morphism
$$
\maS_1^{(2), geo}(\Gamma) \longrightarrow \maS_1^{(2)}(\Gamma).
$$
\end{theorem}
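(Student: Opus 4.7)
The plan is to verify that the assignment $(M,S,f,D,x) \longmapsto f_*[\chi_\geq(\tD),\chi_\geq(D)] + [x]$ descends through the three equivalence moves defining $\maS_1^{(2),geo}(\Gamma)$, and is compatible with the involution $(M,S,f,D,x)\mapsto -(M,S,f,D,x)$. The argument splits naturally according to the moves, with bordism being the substantive case.

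First I would treat the two easy moves. For disjoint union/direct sum, the functional calculus gives $\chi_\geq(D_1\oplus D_2)=\chi_\geq(D_1)\oplus\chi_\geq(D_2)$ upstairs and downstairs, so at the level of $K_0$ of $D^*_{(2)}$ one gets the sum of the two analytic classes; additivity of the map $x\mapsto [x]$, established in Section~\ref{ell2structure}, handles the real number. For the involution, I would observe that $\chi_\geq(-D)=1-\chi_\geq(D)+P_{\Ker D}$ and likewise for $\tD$, so the $K_0$ class associated with $(M,-S,f,-D)$ differs from the negative of the class of $(M,S,f,D)$ by the class of $(P_{\Ker\tD},P_{\Ker D})$; its image under $(\tau_*,\Tr_*)$ is $(\tilde h,h)$, which is the element $[\tilde h - h]\in \R$ by the corollary following Definition of~$[x]$. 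The compensating shift $x\mapsto -x+h-\tilde h$ in the definition of $-(M,S,f,D,x)$ is exactly what is needed.

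For the bundle modification move, I would exploit the $SO(2k)$-equivariant decompositions (\ref{decomp1}) and (\ref{decomp2}). Since $\Ker D_\theta$ is one-dimensional and concentrated in degree $0$ with trivial cokernel, the spectral projector $\chi_\geq(\hat D)$ decomposes, modulo compacts, as $p^*\chi_\geq(D)\otimes e_0 \oplus Q$, where $e_0$ is the rank-one projector onto $\Ker D_\theta$ and $Q$ is a spectral projection of $I\otimes D_\theta$ restricted to the orthogonal complement of $\Ker D_\theta$. The operator $I\otimes D_\theta$ on the complement of its kernel is invertible with symmetric spectrum, so $Q$ is a $\frac{1}{2}$-type projection that is independent of $D$ and cancels in the pair upstairs/downstairs. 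The same picture holds $\Gamma$-equivariantly on the cover by (\ref{decomp2}), and the identification $\maS_1^{(2)}(\hat M)\cong \maS_1^{(2)}(M)$ induced by $\pi$ sends the class of the remaining summand to that of $(\chi_\geq(\tD),\chi_\geq(D))$.

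The hard part will be bordism invariance. Given a geometric cycle which bounds, with chain $(\hat M,\hat S,\hat f, D_{\hat M})$ such that $\Ind^{APS}_{(2)}(\widetilde{D_{\hat M}})-\Ind^{APS}(D_{\hat M})=x$, I must show $[M,S,f,D,x]_{an}=0$ in $\maS_1^{(2)}(\Gamma)$. The strategy is to replace the APS spectral projections by Calderon projectors, whose $K$-theoretic behaviour is governed by the invertible double. Using Appendix~\ref{BVP} (in particular the semi-finite extension of the Booss--Wojciechowski material and the $L^2$-invertibility of the double Dirac operator over Galois coverings), one has that $\chi_\geq(D) - P_+(\hat D) \in K(L^2(M,S))$ and similarly $\chi_\geq(\tD)-P_+(\widetilde{\hat D})\in\maK(\maM_M,\tau)$, so the couple $(\chi_\geq(\tD),\chi_\geq(D))$ coincides in $\maS_1^{(2)}(M)$ with $(P_+(\widetilde{\hat D}),P_+(\hat D))$. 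The Calderon projectors extend through the interior to produce an explicit element of $K_0(\maK(\maM_{\hat M},\tau)\oplus K(L^2(\hat M,\hat S)))$ whose image in $\maS_1^{(2)}(\hat M)$ kills the class of the boundary pair; this is the $\ell^2$ parametrix construction, and after pushing to $B\Gamma$ via $\hat f$ it exhibits $f_*[\chi_\geq(\tD),\chi_\geq(D)]$ as the image in $\maS_1^{(2)}(\Gamma)$ of an element of $\R\oplus\Z$. Computing its coordinates by Ramachandran's $\ell^2$-APS theorem \cite{Ramachandran} and the classical APS theorem \cite{APS1} yields exactly the pair $(-\Ind^{APS}_{(2)}(\widetilde{D_{\hat M}}),-\Ind^{APS}(D_{\hat M}))$, so its image in $\maS_1^{(2)}$ is the class $[-x]$ by the definition of $[\,\cdot\,]$. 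Adding $[x]$ gives zero, which is exactly what is needed. The main technical difficulty here, and the reason for Appendices~\ref{Resolvent} and \ref{BVP}, is the absence of a sharp spectral gap for $\tD$: one only has a semi-finite spectral calculus and must control the difference $\chi_\geq(\tD)-P_+(\widetilde{\hat D})$ in the norm of $\tau$-compact operators uniformly, together with a homotopy argument inside $D^*_{(2)}$ joining the two projections. Once this compactness is in hand the rest of the argument is formal and parallels the Higson--Roe treatment in \cite{HigsonRoe2010}, but now in the semi-finite $\ell^2$ setting.
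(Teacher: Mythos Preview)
Your overall architecture is right: the disjoint union/direct sum move and additivity of $x\mapsto[x]$ are immediate, and bordism is indeed the substantive case requiring the boundary-value-problem machinery of Appendices~\ref{Resolvent} and~\ref{BVP}. Your treatment of the involution via $\chi_\geq(-D)=1-\chi_\geq(D)+P_{\Ker D}$ is a clean observation that the paper leaves implicit.

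However, your bundle modification argument contains a genuine gap. You claim that $\chi_\geq(\hat D)$ decomposes modulo compacts as $p^*\chi_\geq(D)\otimes e_0\oplus Q$, with $Q$ a spectral projection of $I\otimes D_\theta$ on $K^\perp$ that is ``independent of $D$ and cancels in the pair upstairs/downstairs.'' But the operator on $K^\perp$ is $A_K^\perp=(I\otimes D_\theta+p^*D\otimes\epsilon)\vert_{K^\perp}$, not $I\otimes D_\theta$ alone; its spectral projection genuinely depends on $D$, and there is no evident reason for the upstairs and downstairs contributions to cancel. The paper's argument (Proposition~\ref{Modification}) is different and does not attempt such a decomposition. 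Instead it exploits the self-adjoint operator $J\simeq I\otimes\sqrt{-1}\,\epsilon F_\theta$ (and its lift $\tJ$), which \emph{anticommutes} with $A_K^\perp$ (resp.\ $A_{\tK}^\perp$) and squares to the identity on $K^\perp$. This anticommutation produces a path of symmetries $\cos\theta\,F+\sin\theta\,J$ inside $D^*_{(2)}(M,K^\perp)$ connecting the sign of $A_K^\perp$ to $J$; since $J$ commutes with $C(M)$, its associated projection has trivial class by the Eilenberg swindle (Lemma~\ref{ES}). This homotopy-through-symmetries is the missing idea in your sketch.

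For bordism, your strategy via Calderon projectors is in the right spirit but imprecise at the key step: you assert the Calderon projectors ``extend through the interior'' to produce an element of $K_0(\maK(\maM_{\hat M},\tau)\oplus K)$, without saying what that element is or why its image kills the boundary class. The paper (Proposition~\ref{Bordism}) makes this concrete by working with the projection $(\tP^+,P^+)$ onto $\Ker(\thQ)\oplus\Ker(\hQ)$, where $\hQ$ is built from $\hD^+_{\max}$. The identification of $[\tP^+,P^+]$ with $[\chi_\geq(\tD),\chi_\geq(D)]+[x]$ goes through Proposition~\ref{KerCoker} (kernels/cokernels of $\thD^+_{APS}$ via the Calderon machinery), and the vanishing of $[\tP^+,P^+]$ follows from the dense-range Lemma~\ref{denserange} (unique continuation on the cover) together with another Eilenberg swindle. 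Your formulation can likely be made to work, but it needs exactly these ingredients spelled out rather than a reference to a ``parametrix construction.''
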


\begin{proof}
It is clear  that the analytic class associated with a disjoint union of two geometric cycles is equal to the sum of the corresponding analytic classes in $\maS_1^{(2)}(\Gamma)$. As an easy consequence we see that the analytic classes of two cycles which are related through the first relation, disjoint union/direct sum, are equal. Theorem \ref{analyticGeometric} is therefore a consequence of the next two propositions \ref{Modification} and \ref{Bordism}.
\end{proof}

\begin{proposition}\label{Modification}
The analytic class of the bundle modification $({\hat M}, {\hat S}, {\hat f}, {\hat D}, x)$ of the geometric cycle $ (M, S, f, D, x)$ is equal to the analytic class of $(M,S, f, D, x)$.\end{proposition}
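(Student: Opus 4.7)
The plan is to exploit the fundamental identity $\hat D^{2}=I\otimes D_{\theta}^{2}+(p^{*}D)^{2}\otimes I$, which follows from the anticommutation $\{D_{\theta},\epsilon\}=0$ (since $D_{\theta}$ is odd for the grading $\epsilon$). Let $P_{0}$ be the rank one orthogonal projection in $L^{2}(\mathbb{S}^{2k},S_{\theta})$ onto $\ker D_{\theta}$, which lies entirely in the $+1$ eigenspace of $\epsilon$. Because both $D_{\theta}$ and $\epsilon$ preserve $\operatorname{Im}(P_{0})$ and its orthogonal complement, under the identifications \eqref{decomp1} and \eqref{decomp2} the operator $\hat D$ splits $\Gamma$-equivariantly as $\hat D_{\ker}\oplus \hat D_{\perp}$, and likewise for $\widetilde{\hat D}$.

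The first step of the proof is to read off both summands. On the kernel part, $I\otimes D_{\theta}=0$ and $\epsilon=I$, so $\hat D_{\ker}$ is the reduction of $p^{*}D$ to $SO(2k)$-invariants; this is canonically isometric to $D$ acting on $L^{2}(M,S)$ (and on the cover to $\tilde D$ on $L^{2}(\tilde M,\tilde S)$), and one verifies that the isometry intertwines the $C(\hat M)$-action, once inflated via $\pi^{*}: C(M)\to C(\hat M)$ followed by averaging along the fiber, with the natural one, so that the class of $\chi_{\geq}(\hat D_{\ker})$ pushed forward by $\hat f_{*}$ equals $f_{*}[\chi_{\geq}(\tilde D),\chi_{\geq}(D)]$ in $\mathcal{S}_{1}^{(2)}(\Gamma)$. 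On the orthogonal part, the identity above yields $\hat D_{\perp}^{2}\geq\lambda_{1}^{2}I$ where $\lambda_{1}>0$ is the first nonzero singular value of $D_{\theta}$, so $\hat D_{\perp}$ is invertible with spectral gap around zero.

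Next I would use the linear homotopy $\hat D_{t}=I\otimes D_{\theta}+t\cdot p^{*}D\otimes\epsilon$ on the orthogonal summand, which remains invertible throughout $t\in[0,1]$ by the same square identity $\hat D_{t}^{2}=I\otimes D_{\theta}^{2}+t^{2}(p^{*}D)^{2}\otimes I\geq\lambda_{1}^{2}I$. Because of the uniform gap at zero, the functional calculus gives a norm-continuous path of projections $\chi_{\geq}(\hat D_{t}|_{\perp})$ inside the $C^{*}$-algebra, and a parallel argument on the cover gives the analogous path for $\widetilde{\hat D}_{t}|_{\perp}$. The commutators with $C(\hat M)$ remain in the appropriate $\tau$-compact ideal along the homotopy by the resolvent estimates of Appendix~\ref{Resolvent}. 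At $t=0$ the projection reduces to $I\otimes\chi_{\geq}(D_{\theta})|_{\operatorname{Im}P_{0}^{\perp}}$, i.e.\ a scalar (sphere only) projection tensored with the identity on the $\Gamma$-equivariant $L^{2}(\tilde P,\tilde p^{*}\tilde S)^{SO(2k)}$ side.

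Finally, I would show that this "spherical" projection pair has trivial image in $\mathcal{S}_{1}^{(2)}(\Gamma)$. The point is that it is induced, through the Morita equivalence of Appendix~\ref{Compact}, by an element of $K_{0}(C^{*}\Gamma)$ (a constant rank projection along the sphere factor tensored with the Mishchenko line), so under $\alpha_{*}$ it lies in the image of the composition $K_{0}(B\Gamma)\xrightarrow{\mu_{\Gamma}} K_{0}(C^{*}\Gamma)\to \R\oplus\Z$, and hence in $\mathcal{S}_{1}^{(2)}(\Gamma)$ it is killed by exactness of the bottom row of diagram~\eqref{SS}. Combining the two contributions proves $[\hat M,\hat S,\hat f,\hat D,x]_{an}=[M,S,f,D,x]_{an}$. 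The main obstacle I expect is the bookkeeping in the last step: verifying that the $t=0$ projection really does come from $C^{*}\Gamma$ and matches on both the base and the Galois cover in a way compatible with the definition of $\alpha_{*}$; the anticommutation identity and Paschke duality make this plausible, but getting the identifications with the right grading on $S_{\theta}$ and the right sign conventions requires care.
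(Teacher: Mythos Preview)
Your decomposition into $\ker D_\theta$ and its orthogonal complement, and the identification of the kernel piece with $(D,\tilde D)$, match the paper exactly. The divergence is on $K^\perp$: the paper does \emph{not} deform the Dirac operator but instead uses the bounded symmetry $J\simeq I\otimes i\epsilon F_\theta$ (with $F_\theta=\operatorname{sign}D_\theta$), which anticommutes with $\hat D$ and satisfies $J^2=I$ on $K^\perp$. Since $F:=\operatorname{sign}(A_K^\perp)$ also squares to $I$ and anticommutes with $J$, the path $\theta\mapsto\cos\theta\,F+\sin\theta\,J$ is a norm-continuous path of symmetries in $D^*_{(2)}(M,K^\perp)$ connecting $F$ to $J$; and $(\frac{J+I}{2},\frac{J+I}{2})$ has trivial class by Lemma~\ref{ES} because $J$ \emph{commutes} with $C(M)$. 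Your homotopy $\hat D_t$ aims at the same endpoint idea, but two points fail as written.

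First, your last step is incorrect. The $t=0$ projection $I\otimes\chi_\geq(D_\theta)\vert_{K^\perp}$ is \emph{not} $\tau$-compact (nor compact downstairs): the sphere factor already has infinite rank, and in any case you are tensoring with the identity on the infinite-dimensional base factor. So it does not lie in the image of $\maK(\maM,\tau)\oplus K(H)\to D^*_{(2)}$, and the exactness argument via $K_0(C^*\Gamma)\to\R\oplus\Z\to\maS_1^{(2)}$ does not apply. What \emph{does} kill this class is precisely Lemma~\ref{ES}: since $C(M)$ acts only on the $L^2(P,p^*S)$ factor, $I\otimes\chi_\geq(D_\theta)$ commutes with it on the nose, and the Eilenberg swindle gives zero. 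This is exactly the mechanism the paper uses (for $J$ rather than for your $t=0$ projection). Second, your claim of norm-continuity of $t\mapsto\chi_\geq(\hat D_t\vert_\perp)$ is not justified at $t=0$. The perturbation $p^*D\otimes\epsilon$ is unbounded and \emph{not} relatively bounded with respect to $\hat D_0=I\otimes D_\theta$ (the latter is only fiberwise elliptic), so the resolvent identity does not yield norm-continuity of $(\hat D_t-i)^{-1}$ as $t\to 0$. The paper's rotation argument avoids this entirely by working with bounded operators $F,J$ throughout.
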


\begin{proof}\
The proof in the type I case given in \cite{HigsonRoe2010} can be adapted  with some changes for the lifted data on the Galois cover as follows. Denote by $F_\theta$ the sign of the closed self-adjoint fiberwise operator $D_\theta$ and let $J$ and $\tJ$ be the operators corresponding in the two identifications (\ref{decomp1}) and (\ref{decomp2}) to (we use here \cite{HigsonRoe2010})
$$
J\simeq I\otimes {\sqrt{-1}} \ep F_\theta \text{ and } \tJ\simeq I\otimes {\sqrt{-1}} \ep F_\theta.
$$
Then $J$ and $\tJ$ are self-adjoint operators which {\underline{anticommute}} respectively with ${\hat D}$ and ${\widetilde{\hat D}}$. Moreover, $J^2 = I- Q$ where $Q$ is the orthogonal projection onto  $K= \left[ L^2 (P, p^*S) \otimes \Ker (D_\theta)\right]^{SO(2k)}$ and $\tJ^2 = I- \tQ$ where $\tQ$ is the orthogonal projection onto  $\tK=\left[ L^2 (\tP, \tp^*\tS) \otimes \Ker (D_\theta)\right]^{SO(2k)}$. With respect to the decompositions $
L^2 ({\hat M}, {\hat S}) \simeq K \oplus K^\perp$ and $L^2 (\widetilde{\hat M}, \widetilde{\hat S}) \simeq \tK \oplus \tK^\perp $,
we can write
$$
{\hat D}  = \left( \begin{array}{cc} A_K & 0 \\ 0 & A_K^\perp \end{array}\right) \text{ and } {\widetilde{\hat D}} = \left(\begin{array}{cc} A_{\tK} & 0 \\ 0 & A_{\tK}^\perp \end{array}\right)
$$
Notice that  $K\simeq L^2 (M, S)$ and $\tK\simeq L^2(\tM, \tS)$ and that the operators $A_K$ and $A_{\tK}$ correspond through these isomorphisms respectively to the initial  operators $D$ and $\tD$. All identifications upstairs are of course $\Gamma$-equivariant.
It suffices to show that the couple $( \chi_\geq (A_{\tK}^\perp),  \chi_\geq (A_K^\perp))$ represents the zero class, so we are  reduced to work in the  $C^*$-algebra $D^*_{(2)} (M, K^\perp)$.

Denote by $F$ and $\tF$ the partial isometries appearing respectively in the polar decompositions of $A_K^\perp$ and $A_{\tK}^\perp$.
Since these latter operators are injective self-adjoint, they have dense images so that $F$ and $\tF$ are  invertible and self-adjoint. Since $JF+FJ=0$ and that $\tJ\tF+\tF\tJ=0$  the path
$$
\left(\cos (\theta) \tF + \sin (\theta) \tJ , \cos (\theta) F + \sin (\theta) J \right)\quad 0\leq \theta\leq \pi/2,
$$
is composed of symmetries  living in $D^*_{(2)} (M, K^\perp)$.
This shows that the class of the projection $\left(\frac{\tF+I}{2}, \frac{F+I}{2}\right)$ in $\maS_1^{(2)} (M, K^\perp)$ coincides with the class of $\left(\frac{\tJ+I}{2}, \frac{J+I}{2}\right)$ and this latter is zero because the operators commute with the action of $C(M)$ by Lemma \ref{ES}.
This finishes the proof, indeed  $2\chi_\geq (A_K^\perp) -1$ and $F$ are invertible symmetries which fit in the polar decomposition, hence they are equal. The same equality holds on the covers and we thus have:
$$
\left( \chi_\geq (A_{\tK}^\perp),  \chi_\geq (A_K^\perp)\right) = \left(\frac{\tF+I}{2}, \frac{F+I}{2}\right).
$$
\end{proof}

The following classical lemma immediately extends Proposition 8.2.8 in  \cite{HigsonRoeBook}.

\begin{lemma}\label{ES}
Let $(T_1,T_2) \in Proj(D^*_{(2)}(X,H))$ be such that $[T_1,f] =0 \in \maK(\maM_X,\tau)$ and $[T_2,f]=0 \in K(H)$ for all $f\in C(X)$. Then $[T_1,T_2]=0 \in K_0(D^*_{(2)}(X,H))$.
\end{lemma}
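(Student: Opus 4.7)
The plan is to adapt the Higson--Roe argument (Prop.\ 8.2.8 of \cite{HigsonRoeBook}) to the semifinite setting via an Eilenberg swindle. The hypothesis $[T_j,f]=0$ (exactly, not just modulo the compact ideals) places $T_1$ in the commutant of $C(X)$ in the von Neumann algebra $\maM_{X,H}$ and $T_2$ in the commutant of $C(X)$ in $B(H)$. By ampleness of the representation of $C(X)$ on $H$, one can choose a countable family of isometries $\{V_n\}_{n\geq 1}$ in $C(X)'\subset B(H)$ with $V_n^*V_m=\delta_{nm}I_H$ and $\sum_{n\geq 1}V_nV_n^*=I_H$ (strong convergence). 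The operators $V_n\otimes I$ belong to $\maM_{X,H}\simeq B(H)\otimes\maN\Gamma$ and also commute with $C(X)$. The pair $\mathbb{V}_n:=(V_n\otimes I,V_n)$ then defines a partial isometry in $D^*_{(2)}(X,H)$: both components commute with $C(X)$ on the nose, and the compatibility clause of Definition \ref{D2} is inherited from the canonical lift (Lemma \ref{canon}) since $V_n$ is $C(X)$-linear.

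Next, I would form the strong-operator-limit projection
$$
\mathbb{T}^\infty:=\sum_{n\geq 1}\mathbb{V}_n(T_1,T_2)\mathbb{V}_n^*,
$$
a sum of mutually orthogonal projections. Each summand commutes with every $f\in C(X)$ (since the $V_n$ and $T_j$ do), and this property passes to the strong limit; similarly the compatibility in Definition \ref{D2} is preserved, because conjugation by $\mathbb{V}_n$ maps $\maK(\maM_{X,H},\tau)$ into itself and the discrepancies vanish (being zero to begin with under the hypothesis). Hence $\mathbb{T}^\infty\in D^*_{(2)}(X,H)$. Introduce the shift isometry $S\in C(X)'\subset B(H)$ determined by $SV_n=V_{n+1}$, which satisfies $S^*S=I_H$ and $SS^*=I_H-V_1V_1^*$; its pair $\mathbb{S}:=(S\otimes I,S)$ lies in $D^*_{(2)}(X,H)$ by the same argument, and a direct computation gives
$$
\mathbb{S}\,\mathbb{T}^\infty\,\mathbb{S}^*=\mathbb{T}^\infty-\mathbb{V}_1(T_1,T_2)\mathbb{V}_1^*.
$$
Combining the Murray--von Neumann equivalences $[\mathbb{T}^\infty]=[\mathbb{S}\mathbb{T}^\infty\mathbb{S}^*]$ and $[\mathbb{V}_1(T_1,T_2)\mathbb{V}_1^*]=[(T_1,T_2)]$ in $K_0(D^*_{(2)}(X,H))$ yields the swindle identity $[\mathbb{T}^\infty]=[(T_1,T_2)]+[\mathbb{T}^\infty]$, forcing $[(T_1,T_2)]=0$.

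The main obstacle is the verification that $\mathbb{T}^\infty$, defined only as a strong limit, is an honest element of the $C^*$-algebra $D^*_{(2)}(X,H)$ and that the partial isometries $\mathbb{V}_n$ and $\mathbb{S}$ genuinely satisfy the compatibility in Definition \ref{D2}. For the $V_n$ one can either use the partition-of-unity lift of Lemma \ref{canon} (noting that the lift of a $C(X)$-linear operator is again $C(X)$-linear up to $\tau$-compacts) or bypass the question by realizing $V_n\otimes I$ as a manifestly lifted operator in the tensor decomposition $\maM_{X,H}=B(H)\otimes\maN\Gamma$. For the limit, the uniform bound $\|\mathbb{T}^\infty\|\leq 1$ and the fact that the $\tau$-compact ideal is closed under strong convergence of uniformly bounded projections make the verification routine, and reduce the lemma to its classical Higson--Roe avatar applied separately in the $\maM_{X,H}$ and $B(H)$ factors, coupled through the compatibility clause.
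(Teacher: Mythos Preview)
Your approach is the same Eilenberg swindle as the paper's; the only cosmetic difference is that you implement the infinite direct sum inside $H$ via isometries $V_n\in C(X)'$, whereas the paper passes to the representation on $\oplus_{\N}H$ and invokes the independence of $K_0(D^*_{(2)}(X,\bullet))$ from the choice of ample representation. The two realizations are equivalent.

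There is, however, a genuine gap in your verification that $\mathbb{T}^\infty\in D^*_{(2)}(X,H)$. The hypothesis $[T_j,f]=0$ does \emph{not} force the compatibility discrepancy $K:=\Psi_{\reg}(\hat T_2\otimes_{\pi_{\reg}}\mathrm{Id})\Psi_{\reg}^{-1}-T_1$ of Definition~\ref{D2} to vanish; it is only guaranteed to lie in $\maK(\maM_{X,H},\tau)$. The corresponding discrepancy for $\mathbb{T}^\infty$ is then the strong sum $\sum_n (V_n\otimes I)K(V_n^*\otimes I)$, an infinite orthogonal sum of isometric conjugates of a fixed $\tau$-compact operator, and this is in general \emph{not} $\tau$-compact (take $K$ to be a nonzero $\tau$-finite projection). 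Your fallback assertion that ``the $\tau$-compact ideal is closed under strong convergence of uniformly bounded projections'' is false for the same reason. The paper's own proof glosses over exactly this point: it checks only the commutator condition for $(\oplus_{\N}T_1,\oplus_{\N}T_2)$ and is silent on the compatibility clause. In every use of the lemma in the paper (the end of the proof of Proposition~\ref{Modification} and the final step of Proposition~\ref{Bordism}) the pairs in question satisfy $T_1=\Psi_{\reg}(\hat T_2\otimes\mathrm{Id})\Psi_{\reg}^{-1}$ on the nose, so $K=0$ and the swindle is legitimate; you should either add this exact-compatibility assumption to the statement or note explicitly that it holds in the cases where the lemma is applied.
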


\begin{proof}
We follow \cite{HigsonRoeBook} and adapt it to our semi-finite setting. Consider the element $(\oplus_{\N} T_1,\oplus_{\N} T_2)$. This is a well-defined element in $D^*_{(2)}(X,\oplus_{\N}H)$ with the associated representation $\oplus_{\N}\pi$, since for any $f\in C(X)$ we have $[\oplus_{\N}T_1,f] =0 \in \maK(\maM_X\otimes B(\ell^2\N),\tau\otimes \tr)$ and $[\oplus_{\N}T_2,f]=0 \in K(\oplus_{\N}H)$. Using the isomorphism on the level of $K$-theory for ample representations on the Hilbert spaces $H$ and $\oplus_{\N}H$, we get
$$ [\oplus_{\N}T_1,\oplus_{\N}T_2]+[T_1,T_2]= [\oplus_{\N}T_1,\oplus_{\N}T_2]$$

Thus $[T_1,T_2]=0$.
\end{proof}

\begin{proposition}\label{Bordism}\
The analytic class of a geometric cycle which bounds, is trivial in $\maS_1^{(2)}(\Gamma)$.
\end{proposition}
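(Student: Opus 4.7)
The strategy is to compare the analytic class, built from the APS spectral projections $\chi_{\geq}(\tD)$ and $\chi_{\geq}(D)$, with a class built from the Calderón projectors of the bounding Dirac operator $D_{\hat M}$ and its $\Gamma$-lift $\widetilde{D_{\hat M}}$. I will then trivialize the Calderón class via the invertible double and absorb the remaining real-number discrepancy using the APS index formulae of \cite{APS1, Ramachandran} together with the bordism hypothesis.

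First, I would invoke appendix \ref{BVP} to produce the Calderón projectors $C_+ = C_+(D_{\hat M})$ and $\tC_+ = C_+(\widetilde{D_{\hat M}})$ on $L^2(M, S\vert_M)$ and $L^2(\tM, \tS\vert_{\tM})$ respectively, with $\tC_+$ being $\Gamma$-equivariant. Since both are order-zero pseudodifferential projections, their commutators with continuous functions on $M$ are of negative order, hence smoothing and thus $\tau$-compact (respectively compact) by appendix \ref{Resolvent}; consequently $(\tC_+, C_+)\in D^*_{(2)}(M, L^2(M,S))$, and its $K_0$-class pushes to $f_*[\tC_+, C_+]\in \maS_1^{(2)}(\Gamma)$.

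The second and central step is to show $f_*[\tC_+, C_+]=0$ in $\maS_1^{(2)}(\Gamma)$. I would use the invertible double construction: attaching the mirror $\hat M^{op}$ along $M$ produces a closed manifold $DM$, covered by $\widetilde{DM}$, with a Dirac operator whose $L^2$-invertibility modulo its (possibly nontrivial) finite $\tau$-trace kernel is established in appendix \ref{BVP} following \cite{XieYu}. Standard BVP theory (see again appendix \ref{BVP}) expresses the Calderón projector, modulo smoothing, as the boundary trace of the positive-part functional calculus of the invertible double, so that $[\tC_+, C_+]$ is represented, at the level of $K$-theory, by the image of a class coming from a Dirac operator on a closed manifold. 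By Atiyah's covering $\ell^2$-index theorem, the $\Gamma$-regular and $\Gamma$-trivial traces agree on such a class, so its image under $(\tau_*, \Tr_*)\circ \alpha_*$ lies on the diagonal $\Z \hookrightarrow \R\oplus \Z$; by Corollary \ref{SES2} and the exactness of the lower row in diagram \eqref{SS}, this forces $f_*[\tC_+, C_+]=0$.

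Finally, the differences $\chi_{\geq}(D)-C_+$ and $\chi_{\geq}(\tD)-\tC_+$ are compact respectively $\tau$-compact (this is the classical comparison between APS and Calderón projections, which extends to our semi-finite setting by appendix \ref{BVP}). Hence $[\chi_{\geq}(\tD),\chi_{\geq}(D)]-[\tC_+, C_+]$ lies in $K_0(\maK(\maM_{M,H},\tau)\oplus K(L^2(M,S)))\cong \R\oplus \Z$, and I can compute its image under $\delta\circ(\tau_*,\Tr_*)$: by the APS formulae
\[
\Ind^{APS}(D_{\hat M})=\int_{\hat M}\alpha_0 - \tfrac{h+\eta(D)}{2}, \qquad \Ind^{APS}_{(2)}(\widetilde{D_{\hat M}})=\int_{\hat M}\alpha_0 - \tfrac{\tilde h+\eta_{(2)}(\tD)}{2},
\]
the local terms $\int_{\hat M}\alpha_0$ cancel in $\delta$, and what remains matches, up to sign, the difference of APS indices. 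Using the bordism assumption $\Ind^{APS}_{(2)}(\widetilde{D_{\hat M}})-\Ind^{APS}(D_{\hat M})=x$, this discrepancy class equals $[-x]$, and combining with Step 2 gives $f_*[\chi_{\geq}(\tD),\chi_{\geq}(D)]+[x]=0$ in $\maS_1^{(2)}(\Gamma)$, as required.

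I expect the main obstacle to be Step 2: rigorously identifying $[\tC_+, C_+]$ with an assembly image on the invertible double in the semi-finite framework, which requires the full force of the BVP theory on Galois coverings developed in appendix \ref{BVP}, of the $L^2$-invertibility of the double Dirac operator, and of a careful $K$-theoretic traffic between the $C^*$-modules $\maE_{S,\Gamma}$ on the boundary and on the closed double.
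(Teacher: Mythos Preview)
Your route via Calder\'on projectors is genuinely different from the paper's. The paper works on the bounding manifold $\hM$ rather than on $M$: it forms the self-adjoint operator $\thQ$ from $\thD^+_{\max}$, lets $\tilde P^+$ be the projection onto $\Ker(\thQ)=\Ker(\thD^+_{\max})$ (and similarly $P^+$ downstairs), and uses the BVP identification of Proposition~\ref{KerCoker} to prove $[\tilde P^+,P^+]=i_{\partial,*}[\chi_\geq(\tD),\chi_\geq(D)]+[x]$ in $\maS_1^{(2)}(\hM)$. The vanishing of $[\tilde P^+,P^+]$ then follows from the dense range of $\thD^+_{\max}$ (Lemma~\ref{denserange}): the polar-decomposition partial isometry implements $[\tilde P^+,P^+]=[I,I]-[I,I]$, and $[I,I]=0$ by the Eilenberg swindle of Lemma~\ref{ES}.

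Your Step~2 has a genuine gap. The class $[\tC_+,C_+]$ lives in $\maS_1^{(2)}(M)=K_0(D^*_{(2)}(M))$ and its image in $K_1(M)$ is the $K$-homology class $[D]$, which is in general nonzero; hence $[\tC_+,C_+]$ is \emph{not} in the image of $K_0(\maK(\maM_M,\tau)\oplus K(H))$, and the map $(\tau_*,\Tr_*)\circ\alpha_*$ does not apply to it --- that map has source $K_0(C^*\Gamma)$, not the structure group. Atiyah's $\ell^2$-index theorem concerns classes in the image of assembly, not structure classes, so invoking it here is a category error. The invertible double indeed underlies the \emph{construction} of $\tC_+$ (Proposition~\ref{Calderon}), but this does not produce a nullhomotopy of $(\tC_+,C_+)$ inside $D^*_{(2)}$. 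The honest way to kill this class passes through exactly what the paper does: the boundary trace $\tilde b^0_+$ identifies $\Ker(\thD^+_{\max})$ with the Cauchy data space (the range of $\tC_+$), and it is the dense-range/Eilenberg-swindle argument on $\hM$ that trivializes it.

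A smaller point on Step~3: what you actually need is the relative-index identity $\Ind^{APS}(D_{\hat M})=\ind\bigl(\chi_\geq(D)\colon\Range C_+\to\Range\chi_\geq(D)\bigr)$ from \cite{BW} and its semi-finite analogue (essentially Proposition~\ref{KerCoker}), not the full APS formula with $\eta$-terms; the local integrals $\int_{\hat M}\alpha_0$ never enter since you are comparing indices directly.
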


The proof of Proposition \ref{Bordism} is long and will be split into different lemmas and will also use Appendices \ref{Resolvent} and \ref{BVP} which are of independent interest. We assume that the given odd geometric cycle  $(M, S, f, D, x)$ such that $(M,S,f,D)$ bounds the even chain $(\hM, \hS, \hf, \hD)$ in the sense of Definition \ref{Boundary}. We can and will assume that $\hM$ is connected. The Clifford bundle $\hS$ is $\Z_2$-graded; we denote the two graded components as $\hS^+$ and $\hS^-$. The Dirac operator $\hD$ on $\hS$ then decomposes as an odd operator (cf. \cite{HigsonRoe2010}, Definition 3.4):

$$
\hD=\left(\begin{array}{lll}0 & \hD^+\\ \hD^- & 0 \end{array}\right)
$$
Taking into account the grading we will sometimes also denote the ambient even chain $(\hM, \hS, \hf, \hD)$  as $(\hM, \hS^\pm, \hf, \hD^\pm)$.\\

 The space of distributional sections of $\hS$ over $\hM$ will always be for us the topological dual of the space $C_c^\infty (\hM\smallsetminus M, \hS)$ of smooth sections of $\hS$ over $\hM$ which are compactly supported in the interior $\hM\smallsetminus M$, see \cite{BW}.  As usual, we first consider the unbounded operator $\hD=\hD^\pm$ on the dense subspace  $C_c^\infty (\hM\smallsetminus M, \hS)$ and notice then that this is a closable operator.  We denote by $\hD_{\min}^\pm$ the minimal closures of these operators and by $\hD_{\max}^\pm$ the maximal closures. The domain of $\hD^+_{\min}$ is the space of $L^2$ sections $u_+$ of $\hS^+$ with $u_+\vert_M=0$ and such that $\hD^+u_+$, defined in the distributional sense, belongs to $L^2(\hM, \hS^-)$, see again the seminal reference \cite{BW}. It can also be viewed as the closure $W^1_0(\hM, \hS^+)$  of the space $C_c^\infty (\hM\smallsetminus M, \hS)$ in the Sobolev space $W^1(\hM, \hS^+)$.  In the same way, the domain of  $\hD^+_{\max}$ is the space of $L^2$ sections $u_+$ of $\hS^+$ such that $\hD^+u_+$, defined in the distributional sense, belongs to $L^2(\hM, \hS^-)$. The same observations apply to $\hD^-$.

By the von Neumann theorem, the operator $\left((\hD^+_{\min} )^*\hD^+_{\min} + I\right)^{-1/2}$ is thus a  bounded isomorphism between the Hilbert spaces $L^2(\hM, \hS^+)$ and $W^1_0(\hM, \hS^+)$. As a corollary of  the Rellich lemma, we deduce the classical result, see for instance  \cite{BaumDouglasTaylor}:
$$
\left((\hD^+_{\min} )^*\hD^+_{\min} + I\right)^{-1/2} : L^2(\hM, \hS^+)\longrightarrow L^2(\hM, \hS^+)\text{ is a compact operator.}
$$
Lemma 1.2 in \cite{BaumDouglasTaylor}  allows to show that the operator
$$
\left((\hD^+_{\max} )^*\hD^+_{\max} + I\right)^{-1/2} : L^2(\hM, \hS^+)\longrightarrow L^2(\hM, \hS^+)
$$
is a compact operator in restriction to the orthogonal of the kernel of $\hD^+_{\max}$. See again \cite{BaumDouglasTaylor} and in particular the proof of Proposition 3.1 there.

In  Lemma  \ref{Lemma9}, we extend \cite{BaumDouglasTaylor}[Lemma 1.2] to the semi-finite setting. It can thus be applied to our covering situation with
 the lifted data $(\tM, \tS,  \tD)$ on the Galois coverings $\tM\to M$ which bounds the lifted data $(\thM, \thS^\pm,  \thD^\pm)$ on the Galois covering with boundary $\thM\to \hM$. The operators $\thD^\pm$ are defined similarly and we end up with the corresponding minimal and maximal closures $\thD^\pm_{\min}$ and $\thD^\pm_{\max}$.

Again the domain of $\thD^+_{\min}$ can be identified with the closure $W^1_0(\thM, \thS^+)$ of the space $C_c^\infty (\thM\smallsetminus \tM, \thS^+)$ in the first Sobolev space $W^1(\thM, \thS^+)$. The $\Gamma$-equivariant Rellich lemma then shows that the $\Gamma$-invariant operator
$$
\left((\thD^+_{\min} )^*\thD^+_{\min} + I\right)^{-1/2} : L^2(\thM, \thS^+)\longrightarrow L^2(\thM, \thS^+)
$$
 is a $\tau$-compact operator in the von Neumann algebra $B(L^2(\thM, \thS^+))^\Gamma$. Hence by Lemma \ref{Lemma9} and the same argument as in \cite{BaumDouglasTaylor}, we also show that the operator
$$
\left((\thD^+_{\max} )^*\thD^+_{\max} + I\right)^{-1/2} :  \Ker (\thD^+_{\max})^\perp \rightarrow \Ker (\thD^+_{\max})^\perp
$$
is a $\tau$-compact operator in the von Neumann algebra of $\Gamma$-invariant operators in  the $\Gamma$ representation $\Ker (\thD^+_{\max})^\perp$.

Consider the self-adjoint closed operators
$$
\hQ= \left(\begin{array}{cc} 0 & (\hD_{\max}^+)^* \\ \hD_{\max}^+ & 0 \end{array} \right) \text{ and } \thQ= \left(\begin{array}{cc} 0 & (\thD_{\max}^+)^* \\ \thD_{\max}^+ & 0 \end{array} \right),
$$
acting on $L^2 (\hM, \hS)$ and $L^2(\thM, \thS)$ respectively. A corollary of the previous discussion is that the restriction of the operator
$$
(\hQ^2 +I)^{-1/2} \text{ ( respectively  } (\thQ^2 +I)^{-1/2}),
$$
to the orthogonal of its kernel, is a compact (respectively $\tau$-compact) operator.

\begin{lemma}
\label{denserange}
The closed operators
$$
\hD^+_{\max} : \Dom (\hD^+_{\max}) \longrightarrow L^2 (\hM, \hS^-) \text{ and } \thD^+_{\max} : \Dom (\thD^+_{\max}) \longrightarrow L^2 (\thM, \thS^-)
$$
have dense ranges.
\end{lemma}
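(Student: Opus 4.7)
The plan is to use the standard duality for minimal/maximal closures of Dirac operators together with the unique continuation property that was pointed out earlier in the paper for generalized Dirac operators. Recall that the range of a densely defined closed operator is dense if and only if the kernel of its adjoint is trivial. A classical computation (see for instance \cite{BW}) identifies the adjoint of the maximal closure with the minimal closure of the formal adjoint:
\begin{equation*}
(\hD^+_{\max})^* = \hD^-_{\min} \quad \text{and} \quad (\thD^+_{\max})^* = \thD^-_{\min}.
\end{equation*}
So it suffices to prove that $\Ker(\hD^-_{\min}) = 0$ and $\Ker(\thD^-_{\min}) = 0$.

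First I would handle the downstairs case. Let $u \in \Dom(\hD^-_{\min}) = W^1_0(\hM,\hS^-)$ satisfy $\hD^- u = 0$. Using the collar-neighborhood product structure near $\partial \hM = M$, I would form the double $\hM \cup_M \hM$, on which the Clifford structures glue to yield a closed manifold carrying a generalized Dirac operator $\hD^-_{\mathrm{db}}$ extending $\hD^-$. Because $u$ lies in the minimal domain, it has zero boundary trace and thus extends by zero across $M$ to a section $u_{\mathrm{db}} \in W^1(\hM \cup_M \hM, \hS^-_{\mathrm{db}})$ that satisfies $\hD^-_{\mathrm{db}} u_{\mathrm{db}} = 0$ in the distributional sense on the doubled manifold. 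Since $u_{\mathrm{db}}$ vanishes on an open set (the second copy of $\hM$), the unique continuation property of generalized Dirac operators (which was explicitly recalled earlier, with reference to \cite{BW}) forces $u_{\mathrm{db}} \equiv 0$, hence $u \equiv 0$.

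Next, the covering case works by an entirely parallel argument. Any $\tilde u \in \Dom(\thD^-_{\min})$ with $\thD^- \tilde u = 0$ extends by zero across $\tM$ to a section on the Galois covering $\thM \cup_{\tM} \thM$ of the closed double $\hM\cup_M \hM$, and the extension lies in $W^1$ and is annihilated by the $\Gamma$-invariant Dirac operator on the double. The unique continuation property is a local statement so it applies verbatim upstairs, and gives $\tilde u = 0$. Thus $\Ker(\thD^-_{\min}) = 0$, which proves the lemma.

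The only subtle step — and the point I would write out carefully — is justifying that extension by zero of a section in the minimal domain produces a distributional solution of the Dirac equation on the doubled manifold, with no singular contribution supported on $M$. This is where the identification of $\Dom(\hD^\pm_{\min})$ with the closure of sections compactly supported away from the boundary (i.e.\ with $W^1_0$) enters: integration against any test section on the double splits into two pieces over the two copies of $\hM$, and the boundary terms from Green's formula vanish because the minimal-domain trace is zero. Everything else, including the $\Gamma$-equivariance needed upstairs, is automatic from the lifted constructions already set up.
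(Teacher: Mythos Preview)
Your proof is correct and follows essentially the same route as the paper: reduce to injectivity of $\hD^-_{\min}$ (respectively $\thD^-_{\min}$) via the identification $(\hD^+_{\max})^*=\hD^-_{\min}$, extend a minimal-domain kernel element by zero to the doubled manifold, and invoke the unique continuation property (which is local, hence valid on the cover). The paper phrases the extension-by-zero step through an approximating sequence of sections compactly supported away from the boundary and then appeals to elliptic regularity on the Galois cover before applying unique continuation, whereas you argue directly via the $W^1_0$ characterization and Green's formula; these are two packagings of the same argument.
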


\begin{proof}
The proof for $\hD^+$ is given in \cite{HigsonRoe2010} and it is a consequence of the unique continuation property of solutions of Dirac equations. This proof extends to the lifted data on the Galois covering as we proceed to explain  for the convenience of the reader.
We need  to prove that the operator $\thD^-_{\min}$ is injective. Given $\tu\in \Ker (\thD^-)$, there exists a sequence $(\tu_n)_{n\geq 0}$ of smooth  sections of $\thS$ over $\thM$ which are compactly supported  in $\thM\smallsetminus \tM$ such that we have in $L^2 (\thM, \thS^\pm)$:
$$
\tu_n \stackrel{n\to +\infty}{\longrightarrow} \tu \text{ and } \thD^- \tu_n \stackrel{n\to +\infty}{\longrightarrow} 0.
$$
Consider the double manifold $W=\hM\amalg_M (-\hM)$ with the induced map $g: W\to B\Gamma$. Then the map $g$ defines the Galois covering $\tW$ over $W$. The sections $(\tu_n)_{n\geq 0}$ as well as $\tu$ all extend by zero to the  manifold $\tW$. The operator $\thD^-$ also extends into a $\Gamma$-invariant Dirac operator on $\tW$ and we still have in the $L^2$ norms over the  manifold $\tW$ (a Galois $\Gamma$ covering without boundary over the closed manifold $W$):
$$
\tu_n \stackrel{n\to +\infty}{\longrightarrow} \tu \text{ and } \thD^- \tu_n \stackrel{n\to +\infty}{\longrightarrow} 0.
$$
Using Sobolev spaces on Galois coverings \cite{Ramachandran}, we deduce from the ellipticity and the $\Gamma$-invariance of the extended operator $\tD^-$ to $\tW$, that the section $\tu$ is smooth and $C^\infty$ uniformly bounded, and is a solution of a Dirac equation over $\tW$, see Proposition 2.2.2 in \cite{Ramachandran}. Now the unique continuation property holds as well in our situation. Indeed, the proof of Theorem 8.2 in \cite{BW} remains valid for  Galois coverings over connected closed manifolds and the ($\Gamma$-invariant) generalized Dirac operator on the cover, the Carleman estimate being local, see also \cite{Antonini}. We deduce from this discussion that $\tu$ is the zero section over $\tW$ and hence the conclusion.

\end{proof}

\begin{lemma}
We denote by $\hV$ and $\thV$ the partial isometries appearing in the polar decomposition of the closed operators $\hQ$ and $\thQ$ respectively. Then the pair $(\thV, \hV)$ belongs to the  $C^*$-algebra
$
D^*_{(2)} (\hM, L^2(\hM, \hS)).
$
\end{lemma}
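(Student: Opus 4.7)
The plan is to realize both $\hV=\sign(\hQ)$ and $\thV=\sign(\thQ)$ via bounded Borel functional calculus (with the convention $\sign(0)=0$), and to reduce each defining condition of $D^*_{(2)}(\hM, L^2(\hM,\hS))$ to the resolvent estimates just established: $(\hQ^2+I)^{-1/2}$ is compact on $(\Ker\hQ)^\perp$, and, via Lemma \ref{Lemma9} of Appendix \ref{Resolvent}, $(\thQ^2+I)^{-1/2}$ is $\tau$-compact on $(\Ker\thQ)^\perp$. The key analytic tool is the integral representation
\[
\sign(A)\,P_{(\Ker A)^\perp}=\frac{2}{\pi}\int_0^\infty A(A^2+t^2)^{-1}\,dt,
\]
valid for any self-adjoint $A$, which rewrites $\hV$ and $\thV$ as norm-convergent integrals of resolvents.

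To verify the commutator conditions, I would fix $f\in C(\hM)$ and use that $[\hQ,f]$, extended from smooth compactly supported sections, is the bounded operator of Clifford multiplication by $\pm df$ (since $\hD$ is a generalized Dirac operator); the same applies to $[\thQ,f]$ with $f$ lifted as a $\Gamma$-invariant function. The identity
\[
[(\hQ^2+t^2)^{-1},f]=-(\hQ^2+t^2)^{-1}[\hQ^2,f](\hQ^2+t^2)^{-1}
\]
combined with compactness of the resolvent off the kernel yields $[(\hQ^2+t^2)^{-1},f]\in K(L^2(\hM,\hS))$ with $t$-uniform integrable control, so $[\hV,f]$ is compact after integrating. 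The same argument in the $\Gamma$-equivariant setting yields $[\thV,f]\in\maK(\maM_{\hM, L^2(\hM,\hS)},\tau)$, proving $\hV\in D^*_{L^2(\hM,\hS)}(\hM)$ and $\thV\in D^*(\maM_{\hM, L^2(\hM,\hS)},\tau)$.

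It remains to produce a lift $W\in D^*_\Gamma(\hM, L^2(\hM,\hS))$ of $\hV$ such that $\Psi_{\reg}(W\otimes_{\pi_{\reg}}\id)\Psi_{\reg}^{-1}-\thV\in\maK(\maM_{\hM,L^2(\hM,\hS)},\tau)$. The natural choice is to apply the same functional calculus at the Hilbert-module level: letting $\hat{Q}_{\mathrm{mod}}$ denote a regular self-adjoint closure of $\hD$ on $\maE_{\hS,\Gamma}$ that lifts $\hQ$ in the sense of Definition \ref{lift}, set $W=\sign(\hat{Q}_{\mathrm{mod}})$. Under $\Psi_{\reg}$, the operator $\hat{Q}_{\mathrm{mod}}\otimes_{\pi_{\reg}}\id$ is identified, on a common $\Gamma$-equivariant core, with $\thQ$ acting on $L^2(\thM,\thS)\cong L^2(\hM,\hS)\otimes\ell^2\Gamma$, so their bounded functional calculi (and hence the two sign operators) agree up to $\tau$-compact kernel corrections.

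The hard part will be precisely this final identification. The maximal closures $\hD^+_{\max}$ and $\thD^+_{\max}$ may have nontrivial (and not obviously finite-dimensional) $L^2$-kernels, so the polar decompositions on $\Ker\hQ$ and $\Ker\thQ$ are not directly comparable. However, the compactness and $\tau$-compactness of the resolvents off these kernels, together with the dense-range property of Lemma \ref{denserange}, reduce the discrepancy to an operator supported on these ``thin'' spaces, which lies in the $\tau$-compact ideal. This yields $(\thV,\hV)\in D^*_{(2)}(\hM, L^2(\hM,\hS))$ as required.
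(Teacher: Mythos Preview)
Your overall strategy is right, and the commutator analysis via resolvent estimates is essentially what the paper does through Lemma~\ref{Lemma9}. However, there is a gap in the lift step, and your proposed resolution via ``kernel corrections'' is not the right picture. The sign function is not continuous, so there is no direct reason why $\sign(\hat{Q}_{\mathrm{mod}})\otimes_{\pi_{\reg}}\id$ should agree with $\thV$ under $\Psi_{\reg}$, even up to $\tau$-compacts; and the discrepancy is \emph{not} supported on the kernels, which contain $\Ker\hD^+_{\max}$ and are infinite-dimensional, so an operator supported there has no reason to lie in the compact ideal. (There is also a secondary issue: your integral for $\sign$ converges only strongly, the integrand having norm $\sim 1/(2t)$, so deducing compactness of the integrated commutator needs more care than you indicate.)

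The paper's device is to pass through the \emph{continuous} function $\varphi(t)=t(1+t^2)^{-1/2}$. Set $\hF=\varphi(\hQ)$, $\thF=\varphi(\thQ)$. Then $[\hF,f]$ is compact and $[\thF,f]$ is $\tau$-compact by Lemma~\ref{Lemma9}; and one has the explicit factorization
\[
\hV-\hF=\hF\bigl(|\hQ|-(\hQ^2+I)^{1/2}\bigr)\bigl[\hP(\hQ^2+I)^{-1/2}\hP\bigr],
\]
with $\hP$ the projection onto $(\Ker\hQ)^\perp$, and similarly on the cover. The bracketed factor is compact (respectively $\tau$-compact) by the resolvent estimate off the kernel, so $\hV-\hF\in K(H)$ and $\thV-\thF\in\maK(\maM,\tau)$. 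Now continuity of $\varphi$ is exactly what makes the Hilbert-module functional calculus compatible with $\pi_{\reg}$ and $\pi_{\av}$: $\varphi(\widehat{\maQ})$ on the Mishchenko module induces $\thF$ and $\hF$ under these representations. Hence $\varphi(\widehat{\maQ})$ is a lift of $\hV$ (any lift of $\hF$ is a lift of $\hV$ since they differ by a compact), and $\Psi_{\reg}(\varphi(\widehat{\maQ})\otimes\id)\Psi_{\reg}^{-1}-\thV=\thF-\thV\in\maK(\maM,\tau)$. This is the missing ingredient that closes the argument.
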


\begin{proof}\
Consider as in \cite{HigsonRoe2010} the bounded operators
$$
\hF:=\hQ (\hQ^2+I)^{-1/2} \text{ and } \thF:=\thQ (\thQ^2+I)^{-1/2}.
$$
It is a straightforward observation that these operators commute with $C(\hM)$ modulo compact and $\tau$ compact operators respectively. This is a consequence of Proposition 1.1 from \cite{BaumDouglasTaylor} and its easy extension to the semi-finite setting, see the second item of Lemma \ref{Lemma9} in Appendix \ref{Resolvent}. Since the operators $\Sgn (\hQ)$ and $\hF$ are both zero on the kernel of $\hQ$, their difference is a compact operator by the Lemma 9 in \cite{BaumDouglasTaylor}. The same holds on the cover by Lemma \ref{Lemma9}  and we get that $\Sgn (\thQ) - \thF$ is $\tau$-compact. Indeed and more precisely, notice that if $\hP$ and $\thP$ are the projections onto the orthogonal subspaces of the kernels of $\hQ$ and $\thQ$ respectively, then
$$
\hF - \Sgn (\hQ) = \hF (\vert \hQ\vert - (\hQ^2+I)^{1/2}) \left[ \hP (\hQ^2+I)^{-1/2} \hP \right] \text{ and }\thF - \Sgn (\thQ) = \thF (\vert \thQ\vert - (\thQ^2+I)^{1/2}) \left[ \thP (\thQ^2+I)^{-1/2} \thP \right].
$$
Hence using the regular operator $\widehat {\maQ}$ induced by $\hQ$ on the Mishchenko bundle and the fact that for any continuous bounded  function $f$, the operator $f(\widehat\maQ)$ induces through the regular and average representations the operators $f(\thQ)$ and $f(\hQ)$ respectively, we deduce that $\Sign (\widehat \maQ)$ defined using the polar decomposition in Hilbert modules lifts our pair $(\Sign (\thQ), \Sign (\hQ))$. The conclusion follows.

\end{proof}

Given a smooth section $u_+$ of $\hS^+$ over $\hM$, we denote by $b_+u_+$ the trace (restriction) of $u$ at the boundary. In the same way we define $b_-u_-$ as the trace of a section $u_-$ of $\hS^-$ at the boundary. Recall indeed that Clifford multiplication by the inward unit covector $du$ which is  normal to the boundary yields a vector bundle isomorphism $G: \hS^+\vert M \simeq \hS^-\vert M$ which extends to a tubular neighborhood of $M$ in $\hM$. We shall also denote the induced isomorphism $L^2(M,\hS^+\vert_M)\rightarrow L^2(M,\hS^-\vert_M)$ by $G$.
The Dirac operator $\hD^\pm: C^\infty(\hM,\hat S^\pm)\rightarrow C^\infty(X ,\hat S^\mp)$ has the product form in a collar neighbourhood $M\times [0,1] \hookrightarrow \hM$ of the boundary, i.e. it can be written as $\hD=G (\del_u+ D)$, where $D$ is the Dirac operator on the odd closed boundary manifold $M$ and $G$ is the bundle isomorphism defined above. In this expression, we have identified the restriction of $\hS^\pm$ with the pull-back of  the Clifford bundle $S$ over $M$ to the tubular neighborhood.  All these considerations are valid on the cover  $\thM$ with boundary $\tM$. In particular, we also have the isomorphism $\tG:L^2(\tM,\thS^+\vert_{\tM})\rightarrow L^2(\tM,\thS^-\vert_{\tM})$ and we can similarly define the boundary restriction maps on the covering manifold that we denote by ${\tilde b}_\pm$.

It is a classical result that the trace maps $b_\pm$ extend to bounded operators $W^s (\hM, \hS^\pm) \to W^{s-1/2} (M, \hS^\pm\vert_{M})$ if we insist that $s>+1/2$. The proof of this result, given in \cite{BW}, extends to the Galois cover situation and is given in \cite{Ramachandran}. Moreover, by Theorem 13.8 in \cite{BW},  we know that the restriction   $b_\pm (v)$ of $v\in  W^t(\hM, \hS^\pm)$ makes sense in $W^{t-1/2} (M, \hS^\pm\vert_{M})$  for any real number $t$, if we assume in addition that $\hD (v) \in W^{s}(\hM, \hS^\pm\vert_{M})$  for some  $s>-1/2$. We explain in Appendix \ref{BVP}  the extension  of this result to the Galois cover case. As a byproduct of this discussion we deduce:

\begin{proposition}
Assume that $\tv\in L^2 (\thM, \thS^\pm)$ belongs to the $L^2$ kernel of $\thD_{\max}$. Then its trace at the boundary $M$ belongs to $W^{-1/2} (\tM, \thS^\pm\vert_{\tM})$. \end{proposition}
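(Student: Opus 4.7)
The plan is to invoke directly the Galois-cover version of Booss-Wojciechowski, Theorem 13.8, whose extension is prepared in Appendix \ref{BVP} and which was flagged in the paragraph just before the statement. The classical assertion is: for any real $t$, the boundary restriction $b_{\pm}$ extends to a bounded map from the space of $v\in W^t(\hM,\hS^\pm)$ with $\hD(v)\in W^s(\hM,\hS^\pm)$ (for some $s>-1/2$) into $W^{t-1/2}(M,\hS^\pm\vert_M)$. So the proof amounts to verifying that a section $\tv$ in the $L^2$-kernel of $\thD_{\max}$ fits the hypotheses of the covering version with $t=0$ and $s=0$.

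First, I would note that $\tv\in L^2(\thM,\thS^\pm)=W^0(\thM,\thS^\pm)$ by hypothesis, so $t=0$ is admissible. Second, since $\tv\in\Dom(\thD_{\max})$ and $\thD_{\max}\tv=0$, we have trivially $\thD_{\max}\tv=0\in W^s(\thM,\thS^\mp)$ for every $s\in\R$, in particular for $s=0>-1/2$. These are precisely the assumptions of the covering analogue of \cite{BW}[Theorem 13.8] established in Appendix \ref{BVP}, which then yields
\[
\tilde b_{\pm}(\tv)\in W^{-1/2}(\tM,\thS^\pm\vert_{\tM}).
\]

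The only genuine content is the extension to coverings itself, which is precisely what Appendix \ref{BVP} does. The method there follows the $\Gamma$-equivariant version of the argument in \cite{BW}: one uses a $\Gamma$-invariant collar neighborhood $\tM\times[0,1]$ in which $\thD=\tG(\partial_u+\tD)$ has product form, reduces the trace estimate to a one-dimensional problem in $u$ with values in Sobolev spaces over $\tM$ (built via the functional calculus of the $\Gamma$-invariant boundary operator $\tD$), and uses the $\Gamma$-equivariant Rellich lemma together with the results of \cite{XieYu} cited in the appendix to derive the required continuity. The main technical point is to make sense of the Sobolev spaces $W^t(\thM,\thS^\pm)$ and $W^{t-1/2}(\tM,\thS^\pm\vert_{\tM})$ in a $\Gamma$-equivariant way, following Ramachandran \cite{Ramachandran}, and to check that the boundary trace operator $\tilde b_{\pm}$, initially defined on smooth sections with uniform $\Gamma$-invariant bounds, extends by continuity.

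Once this is in place, the statement of the proposition follows immediately by the two-line specialization above. I expect the principal obstacle to be entirely inside Appendix \ref{BVP}, namely proving the covering analogue of \cite{BW}[Theorem 13.8]; the proposition itself is then a direct application with $t=s=0$.
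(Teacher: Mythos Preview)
Your proposal is correct and matches the paper's approach exactly: the proposition is stated as a direct ``byproduct'' of the $\Gamma$-equivariant extension of \cite{BW}[Theorem 13.8] (recorded as Lemma~\ref{W-1/2} in Appendix~\ref{BVP}), and the deduction is precisely the specialization $t=0$, $s=0$ you give. One minor remark on the appendix content: the paper's proof of Lemma~\ref{W-1/2} emphasizes that the argument relies on the explicit construction of the $\Gamma$-invariant Calder\'on projector (Proposition~\ref{Calderon}) rather than on \cite{XieYu} directly, but this does not affect your reduction.
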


In the sequel we denote by $\Psi_{s,t}: W^s \to W^t$ the usual isomorphism between the Sobolev spaces induced  by powers of $I+$ square of the Dirac operator (on the manifolds with boundary, their boundaries  and also on the corresponding  covers).

In Appendix \ref{BVP} we also construct  the $\Gamma$-invariant Calderon projection $\maC (\thD^+)$ for the Dirac operator $\thD^+$. The  principal symbol of $\maC (\thD^+)$ is simply denoted by $c^+$ or $c(\thD^+)$ when it is necessary to emphasize which Dirac operator is considered ($=$ the principal symbol of the Szego projection).

Let $\thD^+_{APS}$ denote the APS operator  with domain given by
$$
\Dom(\thD^+_{APS}):=\{ u \in \Dom(\thD^+_{\max}), \; \Psi_{-1/2,0}(\tilde b_+u)\in \Ker(\chi_{\geq}(\tD))\}.
$$
As for general Boundary Value Conditions considered in the appendix, we consider the following composite map $\tF_{APS}^+: \Ker \thD^+_{\max} \rightarrow \Range (\chi_{\geq}(\tD))$ given by:
$$
\Ker \thD^+_{\max} \xrightarrow{{\tilde b}^0_+} W^{-1/2}(\tY,\hS^+_{|_{\tY}})\xrightarrow{\Psi_{-1/2,0}}L^2(\tY,\hS^+_{|_{\tY}})\xrightarrow{\chi_{\geq}(\tD)} \Range (\chi_{\geq}(\tD))
$$
where ${\tilde b}^0_+$ is the map $\tb^+$ that we have restricted to $\Ker \thD^+_{\max}$.

Applying Proposition \ref{KerCoker} in Appendix \ref{BVP} to $R= \chi_\geq (\tD)$, we deduce:

\begin{proposition}\label{ker}
We have
$$
\Ker \tF_{APS}^+ = \Ker \thD^+_{APS}\text{ and } \Coker \tF_{APS}^+ = \Coker \thD^+_{APS}.
$$
In particular, the $\tau$ index of the APS operator $\thD^+_{APS}$ coincides with the $\tau$ index of $\tF_{APS}^+$.
\end{proposition}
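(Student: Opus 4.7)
The plan is to deduce both equalities directly from the general framework set up in Appendix \ref{BVP}, where Proposition \ref{KerCoker} treats an arbitrary $\Gamma$-invariant boundary projection $R$ satisfying a suitable compatibility with the Calder\'on projection $\maC(\thD^+)$; the present statement is then the specialization $R=\chi_{\geq}(\tD)$. First I would observe that the kernel identity is essentially tautological: by construction $u\in \Ker \thD^+_{APS}$ if and only if $u\in \Dom(\thD^+_{\max})$ with $\thD^+_{\max}u=0$ and $\chi_{\geq}(\tD)\Psi_{-1/2,0}(\tb_+u)=0$. The first condition places $u$ in $\Ker\thD^+_{\max}$, on which the boundary trace is well defined with values in $W^{-1/2}$ by the regularity fact recalled just before the statement, and the second condition is then precisely the vanishing of $\tF^+_{APS}(u)$. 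Hence $\Ker\thD^+_{APS}=\Ker\tF^+_{APS}$ with no further work.

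The cokernel identity is the substantive content and I would obtain it by invoking Proposition \ref{KerCoker} with $R=\chi_{\geq}(\tD)$. The inputs one must check are that $\chi_{\geq}(\tD)$ is a bounded $\Gamma$-invariant orthogonal projection on $L^2(\tM,\thS\vert_{\tM})$, that its principal symbol agrees with $c^+=c(\thD^+)$, and that the difference $\chi_{\geq}(\tD)-\maC(\thD^+)$ lies in the ideal of $\tau$-compact operators of the semi-finite von Neumann algebra $B(L^2(\tM,\thS\vert_{\tM}))^\Gamma$. All three properties are exactly what is established in Appendix \ref{BVP}, following the classical arguments of \cite{BW} transplanted to Galois coverings and refined to the semi-finite setting using \cite{XieYu}. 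Granting this, the Green's formula argument inside Proposition \ref{KerCoker} identifies $\Coker\thD^+_{APS}$ with $\Coker \tF^+_{APS}$ through the boundary bundle isomorphism $\tG$ and the adjoint APS boundary condition, in a way compatible with the trace $\tau$.

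Once both equalities are in hand, the equality of $\tau$-indices is immediate from the definition $\ind_\tau(T)=\tau(p_{\Ker T})-\tau(p_{\Coker T})$, applied to $T=\thD^+_{APS}$ and to $T=\tF^+_{APS}$, since the identifications above are $\Gamma$-equivariant and hence the projections onto the kernels and cokernels have the same trace on each side.

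I expect the main obstacle not to be in the present corollary, which is essentially a one-line reduction, but upstream, in ensuring that Proposition \ref{KerCoker} is actually available in the semi-finite Galois-covering setting. The delicate points there are the existence and symbolic characterization of the $\Gamma$-invariant Calder\'on projection $\maC(\thD^+)$, the $\tau$-compactness of $\chi_{\geq}(\tD)-\maC(\thD^+)$, and the appropriate Poisson-type surjectivity of the Cauchy data map from $\Ker \thD^+_{\max}$ onto $\Range\maC(\thD^+)$ up to $\tau$-compact defects. These are precisely the issues handled by the appendix, and once they are secured the proposition follows as described.
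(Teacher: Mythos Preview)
Your proposal is correct and follows essentially the same route as the paper, which simply applies Proposition~\ref{KerCoker} from Appendix~\ref{BVP} with $R=\chi_{\geq}(\tD)$. One minor remark: the $\tau$-compactness of $\chi_{\geq}(\tD)-\maC(\thD^+)$ that you list among the inputs is not actually a hypothesis of Proposition~\ref{KerCoker} as stated---only the principal-symbol condition on $R$ is required, and the equality of symbols you mention already suffices.
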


\begin{proof} (of Proposition \ref{Bordism})
Let $\tilde P^+$ (resp. $P^+$) denote the projection onto the subspace $\Ker(\thQ)$ (resp. $\Ker(\hQ)$).  From Proposition \ref{ker} we deduce that there are pairs of projections $[\tilde e^+,e^+ ], [\tilde e^-, e^-] \in \maS^{(2)}_1(\hM, L^2(\hM, \hS))$ such that
$$
[\tilde P^+, P^+] - [\tilde e^+,e^+ ] = i_{\del,*}([\chi_\geq (\tD),\chi_\geq (D)]) - [\tilde e^-,e^-]\in S^{(2)}_1(\hM, L^2 (\hM, \hS)),
 $$
where $i_{\del}: M\hookrightarrow \hM$ is the boundary inclusion. Moreover,
$$
\tau(\tilde e^+)-\tau(\tilde e^-)= \Ind_{(2)}(\thD^+_{APS}) \text{ and }  \Tr(e^+)-\Tr(e^-)=\Ind(\hD^+_{APS}).
$$
Using the results of Section \ref{analytic} and the definition of the class $[x]\in \maS^{(2)}_1(\Gamma)$, we deduce that
$$
[\tilde P^+, P^+] = [\chi_\geq (\tD),\chi_\geq (D)]+[x].
$$
It thus only remains to show that the class $[\tilde P^+,P^+]$ is zero in $\maS^{(2)}_{1}(\Gamma)$.

From Proposition \ref{denserange} we know that the range of $\thD^+_{\max}$ is dense in $L^2(\thM,\thS^-)$. Therefore the partial isometry part $\tU^+$ of the polar decomposition of $\thD^+_{\max}$ is surjective. But
\begin{equation}
\label{plus}
 \tP^++(\tU^+)^*\tU^+=id_{L^2(\thM,\thS^+)},
\end{equation}
and $\tU^+(\tU^+)^*$ is the projection onto the closure of the range space of $\thD^+_{\max}$. Therefore,
\begin{equation}
\label{minus}
\tU^+(\tU^+)^*=id_{L^2(\thM,\thS^-)}
\end{equation}
Thus from Equations (\ref{plus}) and (\ref{minus}), we get at the level of $K$-theory:
$$
[\tilde P^+, P^+] = [id_{L^2(\thM,\thS^+)}, id_{L^2(\hM,\hS^+)}]-[id_{L^2(\thM,\thS^-)}, id_{L^2(\hM,\hS^-)}].
$$
The right hand side of the above equation is zero in $K$-theory since the operators commute with the representation, by an  ``Eilenberg swindle" argument of Lemma \ref{ES}.   Hence the proof is complete.

\end{proof}

\begin{proposition}
We have an exact sequence
$$
 0 \to \R \longrightarrow \maS_1^{(2), geo} (\Gamma) \longrightarrow K_1^{geo} (B\Gamma) \to 0.
$$
which fits in the following commutative diagram
\vspace{0,2cm}
\hspace{0,5cm}
\begin{equation}\label{geo-an}
\begin{CD}
0 @>>>  \R   @>>>    \maS_1^{(2),geo} (\Gamma)  @>>> K_1^{geo} (B\Gamma) @>>>  0\hspace{1cm}\\
@. @V{=}VV @VVV  @V{\cong}VV @.\\
0 @>>> \R @>>>  \maS_1^{(2)}(\Gamma) @>>>  K_1(B\Gamma)@>>>  0\hspace{1cm}\\
\end{CD}
\end{equation}
\vspace{0,5cm}
\end{proposition}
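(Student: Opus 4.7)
The plan is to combine the already established exactness of the analytic sequence (Corollary \ref{SES2}) with the classical Baum--Douglas identification $K_1^{geo}(B\Gamma)\cong K_1(B\Gamma)$, using Theorem \ref{analyticGeometric} to compare the two settings. First I would define the horizontal maps in the top row: the left map $\R\to\maS_1^{(2),geo}(\Gamma)$ sends $x$ to the class $[\emptyset,0,0,0,x]$ of the empty manifold with parameter $x$ (a group morphism since disjoint union adds the real parameters), and the right map $\maS_1^{(2),geo}(\Gamma)\to K_1^{geo}(B\Gamma)$ is the forgetful map $[(M,S,f,D,x)]\mapsto [(M,S,f)]$. Well-definedness of the latter is essentially free: direct sum and bundle modification preserve the underlying geometric cycle, and the bordism relation in $\maS_1^{(2),geo}$ requires the existence of an ambient even chain $(\hM,\hS,\hf)$, which is precisely a $K_1^{geo}$-bordism after forgetting $\hD$ and the real parameter.

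Next I would verify commutativity of the diagram. For the left square, both compositions send $x\in\R$ to the class $[x]\in\maS_1^{(2)}(\Gamma)$, which follows directly from the definition of the analytic class $[(\emptyset,0,0,0,x)]_{an}=[x]$. For the right square, the upper path sends $(M,S,f,D,x)$ to $f_*[D]\in K_1(B\Gamma)$ under the Baum--Douglas isomorphism, while the lower path applies the analytic quotient map from Corollary \ref{SES2} to $f_*[\chi_\ge(\tD),\chi_\ge(D)]+[x]$. Since the class $[x]$ lies in the image of $\R\to\maS_1^{(2)}(\Gamma)$, it vanishes in $K_1(B\Gamma)$; the remaining term yields $f_*[D]$ by the Paschke duality identification that underlies the Higson--Roe boundary map applied to the Szegő projection of a Dirac operator.

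Third I would deduce exactness of the top row. Surjectivity onto $K_1^{geo}(B\Gamma)$ is immediate: any cycle $(M,S,f)$ lifts to $(M,S,f,D,0)$. Injectivity of $\R\to\maS_1^{(2),geo}(\Gamma)$ follows from the commutativity of the left square together with the known injectivity of $\R\to\maS_1^{(2)}(\Gamma)$ from Corollary \ref{SES2}. For exactness in the middle, suppose $(M,S,f,D,x)$ forgets to a class that bounds in $K_1^{geo}(B\Gamma)$; choose an ambient even chain $(\hM,\hS,\hf)$ with $\partial(\hM,\hS,\hf)=(M,S,f)$ and extend $D$ to a generalized Dirac operator $\hD$ on $\hM$ with product structure near the boundary (this extension is standard, using a collar and a Clifford connection agreeing with $S$). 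Set
$$
y:=\Ind^{APS}_{(2)}(\widetilde{\hD})-\Ind^{APS}(\hD)\in\R.
$$
By Definition \ref{Boundary}, the cycle $(M,S,f,D,y)$ is a boundary, hence trivial in $\maS_1^{(2),geo}(\Gamma)$. Using the tautological identity
$$
(M,S,f,D,x)=(M,S,f,D,y)\amalg(\emptyset,0,0,0,x-y),
$$
we conclude that $[(M,S,f,D,x)]$ is the image of $x-y$ under $\R\to\maS_1^{(2),geo}(\Gamma)$, proving exactness in the middle.

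The main obstacle I anticipate is the commutativity of the right square, because one must make precise how the analytic-to-$K$-homology quotient in Corollary \ref{SES2} applied to the class of the pair of Szegő projections $(\chi_\ge(\tD),\chi_\ge(D))$ recovers the $K$-homology fundamental class $[D]$. This requires unpacking Paschke duality and the definition of the Higson--Roe boundary; fortunately this is a local statement on $M$ (the covering data plays no role here since the analytic target factors through $K_1(B\Gamma)$) and so reduces to the standard fact, used implicitly in \cite{HigsonRoe2010}, that Paschke duality identifies the $K_0$-class of $\chi_\ge(D)$ with the odd $K$-homology class of $D$. Once this is in place, all other verifications reduce to direct computation at the level of cycles.
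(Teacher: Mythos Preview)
Your proof is correct and follows essentially the same route as the paper's: the same definitions of the horizontal maps, the same commutativity arguments, and the same identification of the image in $K_1(B\Gamma)$ with $f_*[D]$ via Paschke duality applied to the class of $\chi_\geq(D)$. You actually go further than the paper by supplying an explicit argument for exactness in the middle of the top row, which the paper's own proof does not spell out. One minor caveat in that step: $[(M,S,f)]=0$ in $K_1^{geo}(B\Gamma)$ only guarantees that $(M,S,f)$ bounds after applying direct-sum and bundle-modification moves, not that it bounds on the nose; since those moves lift to $\maS_1^{(2),geo}(\Gamma)$ by construction of the equivalence relation there, first replace $(M,S,f,D,x)$ by an equivalent cycle whose underlying triple literally bounds, and then your argument goes through verbatim.
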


\begin{proof}
The maps are defined as follows. We assign to any $x\in \R$ the class of $(M,S,f,D,x)$ where $M=\emptyset$ and we assign to any $(M,S,f,D,x)$ its class in $K_1^{geo} (B\Gamma)$, where we consider the version of the Baum-Douglas geometric $K$-homology group \cite{BaumDouglas}with oriented cycles, we refer the reader to \cite{HigsonRoe2010}, Section 3 for the detailed definitions and  \cite{KeswaniControlledPaths}, Section 2 for the isomorphism between this group and the Baum-Douglas geometric group. The forgetful map $\maS_1^{(2), geo} (\Gamma) \rightarrow K_1^{geo} (B\Gamma)$ is then surjective.
It is clear by definition that the composite map
$$
\R \longrightarrow \maS_1^{(2), geo} (\Gamma) \longrightarrow \maS_1^{(2)}(\Gamma)
$$
coincides with the map $x\mapsto [x]$ appearing in the analytic exact sequence
$$
0\to  \R   \longrightarrow \maS_1^{(2)}(\Gamma) \longrightarrow   K_1(B\Gamma)\to  0
$$
Hence the injectivity of the map $\R \longrightarrow \maS_1^{(2), geo} (\Gamma)$ is deduced right away.

Now  given a cycle $(M,S,f,D,x)\in \maS_1^{(2), geo} (\Gamma)$,   and if $(e_1, e_2)\in \maK(\maM_M,\tau)\oplus \maK (L^2(M, S))$ is a projection satisfying $\tau(e_1) - \Tr (e_2) = x$ then
$$
[\chi_\geq (\tD), \chi_\geq (D)] + [x] = [\chi_\geq ({\tilde D}) + e_1, \chi_\geq (D) + e_2] \in K_0(D^*_{(2)} (M, L^2(M,S))).
$$
This latter is sent under the morphism, induced by the quotient map from $D^*_{(2)} (M, L^2(M,S))$  to the quotient algebra $D^*_{(2)} (M, L^2(M,S))/\left[\maK(\maM_M,\tau)\oplus \maK (L^2(M, S))\right]$, to the class in the $K_0$ of this quotient $C^*$-algebra of $\chi_\geq (D)$. But, this latter is obviously sent under the identification
$$
K_0\left( D^*_{(2)} (M)/[\maK(\maM_M,\tau)\oplus \maK (L^2(M, S))]\right)\simeq K_1 (M)
$$
to the class $[D]\in K_1(M)$. Hence we deduce that
the image of $[\chi_\geq (\tD), \chi_\geq (D)] + [x] \in \maS_1^{(2)} (\Gamma)$ in $ K_1 (B\Gamma)$ coincides the analytic class $f_*[D]\in K_1(B\Gamma)$ where $f_*: K_1(M) \to K_1(B\Gamma)$ is the push forward map in $K$-homology.
\end{proof}

We are now in position to prove the following

\begin{theorem}\label{Isomorphism}
The morphism $\maS_1^{(2), geo} (\Gamma) \longrightarrow \maS_1^{(2)}(\Gamma) $ defined above  by
$$
[M, S, f, D, x] \longmapsto f_*[\chi_\geq (\tD), \chi_\geq (D)] + [x]
$$
is an isomorphism.
\end{theorem}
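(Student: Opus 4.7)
The plan is to apply the five lemma to the commutative diagram of short exact sequences
\[
\begin{CD}
0 @>>> \R @>>> \maS_1^{(2),geo}(\Gamma) @>>> K_1^{geo}(B\Gamma) @>>> 0 \\
@. @V{=}VV @VVV @V{\cong}VV @. \\
0 @>>> \R @>>> \maS_1^{(2)}(\Gamma) @>>> K_1(B\Gamma) @>>> 0 \\
\end{CD}
\]
whose rows have already been shown to be exact (the top row in the preceding proposition, the bottom in Corollary \ref{SES2}), and whose commutativity was also established in the preceding proposition. The left vertical arrow is the identity by construction of the map $x \mapsto (\emptyset, \emptyset, \emptyset, \emptyset, x) \mapsto [x]$. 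The right vertical arrow is the classical Baum--Douglas isomorphism identifying the geometric model of $K$-homology (with oriented cycles, as in \cite{KeswaniControlledPaths}) with the analytic $K$-homology group $K_1(B\Gamma)$; this is a well-established result that does not involve the semi-finite constructions of the present paper.

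Once we know the two outer vertical arrows are isomorphisms, the five lemma (or more elementarily a direct diagram chase, since both rows are already split into injection--surjection pairs) immediately gives that the middle vertical arrow is an isomorphism. Specifically, surjectivity follows because any class in $\maS_1^{(2)}(\Gamma)$ projects to some $\kappa\in K_1(B\Gamma)$ which lifts to a geometric cycle $(M,S,f,D,0)$; adjusting by an element of $\R$ (which lifts identically from the bottom to the top sequence via $x\mapsto [x]$) corrects the analytic class to the prescribed one. Injectivity is similar: if a geometric cycle $(M,S,f,D,x)$ has trivial analytic class then its image in $K_1(B\Gamma)$ is zero, so by the isomorphism on the right the underlying cycle $(M,S,f,D)$ bounds geometrically in the Baum--Douglas sense, and one may then use Definition \ref{Boundary} together with the freedom to adjust $x$ (as in Proposition \ref{Bordism}) to realize this bordism in $\maS_1^{(2),geo}(\Gamma)$.

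The only real subtlety is verifying that the right-hand square commutes with the \emph{correct} Baum--Douglas isomorphism: namely, that the composition $\maS_1^{(2),geo}(\Gamma)\to \maS_1^{(2)}(\Gamma)\to K_1(B\Gamma)$ sends $[M,S,f,D,x]$ to $f_*[D]$, where $[D]\in K_1(M)$ is the analytic $K$-homology class of the Dirac operator. This is exactly the computation carried out at the end of the proof of the preceding proposition, where the image of $f_*[\chi_\geq(\tD),\chi_\geq(D)]+[x]$ under $\maS_1^{(2)}(\Gamma)\to K_1(B\Gamma)$ was identified, via Paschke duality and the isomorphism $K_0(D^*_{(2)}(M)/(\maK(\maM_M,\tau)\oplus\maK(L^2(M,S))))\simeq K_1(M)$, with $f_*[D]$. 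Since the standard Baum--Douglas isomorphism $K_1^{geo}(B\Gamma)\to K_1(B\Gamma)$ is precisely $[M,S,f,D]\mapsto f_*[D]$, this is exactly the commutativity needed, and the theorem follows.
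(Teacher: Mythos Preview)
Your proof is correct and follows essentially the same approach as the paper: both establish the commutative diagram of short exact sequences and conclude via the five lemma, using the Baum--Douglas/analytic $K$-homology isomorphism (from \cite{BaumHigsonSchick} and \cite{KeswaniControlledPaths}) for the right vertical arrow. You have in fact given more detail than the paper, including an explicit diagram chase and a careful verification of the commutativity of the right-hand square.
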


\begin{proof} \
This is a consequence of the commutativity of the previous diagram \eqref{geo-an}. More precisely, the isomorphism between the Baum-Douglas geometric $K$-homology and analytic $K$-homology was first proved in \cite{BaumHigsonSchick} while  the identification between the Baum-Douglas geometric $K$-homology group \cite{BaumDouglas} and its oriented version used in our paper was proved in \cite{KeswaniControlledPaths}. Then, the proof is now complete by applying  the five lemma.
\end{proof}

\section{Cheeger-Gromov invariant as a structure morphism}\label{RhoMorphism}

We  associate with any choice of a Dirac operator $D$ for the geometric  cycle $(M,S,f)$, and using the Galois cover associated with $f$, the Cheeger-Gromov $\ell^2$ eta invariant \cite{CheegerGromov} that we  denote by $\eta_{(2)} (\tD)$ as well as the usual APS eta invariant $\eta (D)$  of the operator $D$. Then we define the $\ell^2$ rho invariant by the formula
$$
\rho_{(2)} (M, S, f, D) :=\frac{1}{2}\left[ (\eta_{(2)} (\tD) - \eta (D)) + ({\tilde h} - h) \right]
$$
where
$$
h= \dim (\Ker (D)) \text{ and } {\tilde h}= \dim_{(2)} (\Ker (\tD)).
$$

\begin{definition}
The $\ell^2$ relative eta invariant of the geometric cycle $(M,S,f,D, x)$ is by definition
$$
\xi (M,S,f,D, x) := \rho_{(2)} (M, S, f, D) + x \quad \in \R.
$$
\end{definition}

\begin{proposition}\label{RelativeEta}
The $\ell^2$ relative eta invariant of the geometric cycle $(M,S,f,D, x)$  only depends on its  class $[M,S,f,D, x] $ in $\maS_1^{geo, (2)} (\Gamma)$. Moreover, the resulting map
$$
\xi: \maS_1^{geo, (2)} (\Gamma)\longrightarrow \R,
$$
 is a group morphism.
\end{proposition}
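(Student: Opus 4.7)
The plan is to verify that $\xi$ is invariant under each of the three equivalence moves (disjoint union/direct sum, bordism, bundle modification) defining $\maS_1^{(2),geo}(\Gamma)$, and then read off additivity from the disjoint union case. Additivity under disjoint union is immediate: both $\rho_{(2)}$ and the real parameter $x$ add under $\amalg$, since the APS eta invariant, the Cheeger--Gromov $\ell^2$ eta invariant, and the dimensions $h, \tilde h$ are all additive for direct sums of Dirac operators; the same computation handles the identification of $(M, S_1, f, D_1, x)\amalg (M, S_2, f, D_2, x')$ with $(M, S_1\oplus S_2, f, D_1\oplus D_2, x+x')$. Once well-definedness is established, the group morphism property of $\xi$ follows at once.

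The core step is bordism invariance, which uses the two APS index theorems recalled in the introduction of Section~\ref{geometric}. Suppose $(M,S,f,D,x)$ bounds $(\hat M,\hat S,\hat f,\hat D^{\pm})$ with associated boundary operator $D$ in the sense of Definition \ref{Boundary}. The ordinary APS formula and Ramachandran's $\ell^2$ APS formula on the Galois cover $\widetilde{\hat M}\to \hat M$ both feature the same local integrand $\int_{\hat M}\alpha_0(\hat D)$, so subtracting them gives
\begin{equation*}
\Ind^{APS}_{(2)}(\widetilde{D_{\hat M}}) - \Ind^{APS}(D_{\hat M}) \;=\; -\tfrac{1}{2}\bigl[(\eta_{(2)}(\tilde D)-\eta(D)) + (\tilde h - h)\bigr] \;=\; -\rho_{(2)}(M,S,f,D).
\end{equation*}
By the hypothesis defining a boundary this difference equals $x$, hence $\xi(M,S,f,D,x)=\rho_{(2)}+x=0$. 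Applied to the disjoint union $(M,S,f,D,x)\amalg -(M',S',f',D',x')$, which is a boundary whenever the two cycles are bordant, and using the sign conventions $\eta(-D)=-\eta(D)$, $\eta_{(2)}(-\tilde D)=-\eta_{(2)}(\tilde D)$ together with the correction $h'-\tilde h'$ inserted into the definition of the opposite cycle, this collapses to the desired equality $\xi(M,S,f,D,x)=\xi(M',S',f',D',x')$. The sign shift in the definition of the opposite cycle is precisely what makes the identity clean, and checking this algebra is the main bookkeeping task.

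The one genuinely delicate point is bundle modification invariance: one must show that $\rho_{(2)}(\hat M,\hat S,\hat f,\hat D)=\rho_{(2)}(M,S,f,D)$ when $\hat M=P\times_{SO(2k)}\mathbb{S}^{2k}$ with $\hat D\simeq I\otimes D_\theta+\tp^*\tD\otimes\varepsilon$. I plan to argue this geometrically by producing an explicit bordism, as in the Baum--Douglas framework: the associated disk-bundle construction exhibits $(\hat M,\hat S,\hat f,\hat D)$ and $(M,S,f,D)$ as boundary components of a common even-dimensional chain, to which the previous paragraph applies (after choosing the real parameter on the bordism to compensate the difference $\Ind^{APS}_{(2)}-\Ind^{APS}$). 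The kernel properties of $D_\theta$ on $\mathbb{S}^{2k}$ (one-dimensional, concentrated in degree zero) together with $SO(2k)$-equivariance force the local integrands to pair correctly, so the bundle modification reduces to an instance of the bordism relation already handled. The main obstacle, and hence what will require the most care to write out, is arranging this bordism compatibly with the Galois cover structure so that Ramachandran's $\ell^2$ APS theorem applies verbatim; alternatively, one can invoke the product formula for $\eta$ and $\eta_{(2)}$ on $SO(2k)$-equivariant sphere bundles, but the bordism approach is more natural in our setting since bordism invariance has already been established.
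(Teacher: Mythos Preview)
Your treatment of disjoint union/direct sum and of bordism matches the paper's proof exactly: additivity is clear, and the subtraction of the APS and Ramachandran formulae gives $\Ind^{APS}_{(2)}-\Ind^{APS}=-\rho_{(2)}$, whence $\xi=0$ on boundaries.

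The gap is in your handling of bundle modification. The disk-bundle $W=P\times_{SO(2k)}D^{2k+1}$ has boundary $\partial W=\hat M$ only; $M$ sits inside $W$ as the zero section, not as a boundary component, so $W$ is \emph{not} a bordism between $\hat M$ and $M$. More generally, in the Baum--Douglas framework bundle modification is an independent move and does not reduce to bordism plus direct sum. Even if you manufactured some bordism $W'$ with $\partial W'=\hat M\amalg(-M)$, verifying the boundary condition of Definition~\ref{Boundary}, namely $\Ind^{APS}_{(2)}(\widetilde{D_{W'}})-\Ind^{APS}(D_{W'})=h-\tilde h$, would force you through the APS formulae again and hence through the comparison of $\eta(\hat D)$ with $\eta(D)$ and of $\eta_{(2)}(\widetilde{\hat D})$ with $\eta_{(2)}(\tilde D)$ --- exactly what you are trying to prove. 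So the argument is circular.

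The paper instead proves $\eta(\hat D)=\eta(D)$ and $\eta_{(2)}(\widetilde{\hat D})=\eta_{(2)}(\tilde D)$ directly. Using the $SO(2k)$-equivariant identifications one writes $\hat D=A_K\oplus A_K^\perp$ with $A_K\simeq D$ (and similarly on the cover). The operator $J\simeq I\otimes i\varepsilon F_\theta$ anticommutes with $\hat D$ and satisfies $J^2=I$ on $K^\perp$, so $JA_K^\perp J=-A_K^\perp$, which forces $\Tr\bigl(A_K^\perp e^{-t(A_K^\perp)^2}\bigr)=0$ for all $t>0$; the same computation with $\tilde J$ and the Atiyah trace $\tau$ gives the $\ell^2$ statement. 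Together with the evident isomorphism of kernels, this yields $\rho_{(2)}(\hat M,\hat S,\hat f,\hat D)=\rho_{(2)}(M,S,f,D)$ with $x$ unchanged. You should replace your bordism sketch for bundle modification by this anticommutation argument.
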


\begin{proof}
It is clear that the $\ell^2$ relative eta invariant of the disjoint union of two geometric cycles is the sum of the respective $\ell^2$ relative eta invariants.
Assume now that the  geometric cycle $(M, S, f, D, x)$ is such that the Baum-Douglas cycle $(M,S,f)$ bounds the chain $({\hat M}, {\hat S}, {\hat f})$, the Dirac operator $D$ bounds ${\hat D}$ with
$$
\Ind^{APS}_{(2)}(\widetilde{\hat D}) - \Ind^{APS} ({\hat D}) = x.
$$
Then applying the APS theorem to ${\hat D}$ and the $\ell^2$ APS theorem to $\widetilde{\hat D}$, we get
$$
\Ind^{APS}_{(2)}(\widetilde{\hat D}) - \Ind^{APS} ({\hat D}) = -\rho_{(2)} (M, S, f, D).
$$
Therefore, we end up with the equality $\xi (M,S,f,D, x) = \rho_{(2)} (M, S, f, D) + x= 0$. This shows, together with the compatibility with direct sums that the  $\ell^2$ relative eta invariant respects the bordism  relation.

For the bundle modification relation, the proof in the type I case given in \cite{HigsonRoe2010} needs to be modified so as to fit with the new type II situation. We shall use the notations of the bundle modification relation described in the previous section.
Let $(M, S, f, D, x)$ be a geometric cycle and $SO(2k) - P \stackrel{p}{\rightarrow} M$  a principal bundle, and let $\pi: {\hat M} = P\times_{SO(2k)} {\mathbb S}^{2k}\to M$ be the associated sphere bundle. Recall that we have defined the Dirac operator ${\hat D}$ acting on the sections of ${\hat S}$ over  ${\hat M}$.


From the proof of Proposition (\ref{Modification}), we have the following decompositions
$$
L^2 ({\hat M}, {\hat S}) \simeq K \oplus K^\perp \quad \text{ and } L^2 (\widetilde{\hat M}, \widetilde{\hat S}) \simeq \tK \oplus \tK^\perp,
$$
where $K= \left[ L^2 (P, p^*S) \otimes \Ker (D_\theta)\right]^{SO(2k)}$ and $\tK=\left[ L^2 (\tP, \tp^*\tS) \otimes \Ker (D_\theta)\right]^{SO(2k)}$, so we can write
$$
{\hat D}  = \left( \begin{array}{cc} A_K & 0 \\ 0 & A_K^\perp \end{array}\right) \text{ and } {\widetilde{\hat D}} = \left(\begin{array}{cc} A_{\tK} & 0 \\ 0 & A_{\tK}^\perp \end{array}\right)
$$
Recall also that we have isomorphisms $K\simeq L^2 (M, S)$ and $\tK\simeq L^2(\tM, \tS)$ and that the operators $A_K$ and $A_{\tK}$ correspond through these isomorphisms respectively to the initial operators $D$ and $\tD$.
For any  bounded Borel function $f$ we  have the relation $
f(J A_K^\perp J) = J f(A_K^\perp) J$. Since $J$ anticommutes with ${\hat D}$ the self-adjoint operator $J A_K^\perp J$ coincides with $-A_K^\perp$. Moreover, if $f$ is  odd, then
$$
f(J A_K^\perp J) + f(A_K^\perp) =0.
$$
The similar relations hold on the cover, namely $f(\tJ A_{\tK}^\perp \tJ) = \tJ f(A_{\tK}^\perp)\tJ $ and when $f$ is odd $ f(\tJ A_{\tK}^\perp \tJ) + f(A_{\tK}^\perp) =0$. As a consequence, we deduce that for any $t>0$:
$$
J A_K^\perp  \exp (-t (A_K^\perp  )^2) J + A_K^\perp  \exp (-t (A_K^\perp)^2) = 0 \text{ and } \tJ A_{\tK}^\perp  \exp (-t (A_{\tK}^\perp  )^2) \tJ+ A_{\tK}^\perp  \exp (-t (A_{\tK}^\perp)^2) = 0.
$$
Therefore
$$
\Tr \left(A_K^\perp  \exp (-t (A_K^\perp)^2)\right) =-  \Tr\left(J^2 A_K^\perp  \exp (-t (A_K^\perp)^2) \right) \text{ and } \tau\left(A_{\tK}^\perp  \exp (-t (A_{\tK}^\perp  )^2) \right) = - \tau\left( \tJ^2 A_{\tK}^\perp  \exp (-t (A_{\tK}^\perp  )^2) \right).
$$
Notice that the von Neumann trace $\tau$ used here is the Atiyah trace for $\Gamma$-invariant operators on the $\Gamma$-Hilbert subspace $\tK^\perp$.  Since $
J^2 A_K^\perp = A_K^\perp \text{ and } \tJ^2A_{\tK}^\perp = A_{\tK}^\perp$,
we get:
$$
\Tr \left(A_K^\perp  \exp (-t (A_K^\perp)^2)\right) = 0 \text{ and } \tau\left(A_{\tK}^\perp  \exp (-t (A_{\tK}^\perp  )^2) \right) = = 0.
$$
Finally,
$$
\eta ({\hat D}) = \eta (D) \text{ and } \eta_{(2)} (\widetilde{\hat D}) = \eta_{(2)} (\tD).
$$
The obvious isomorphism between the kernel of ${\hat D}$ and the kernel of $D$ and the same isomorphism between the kernel of ${\widetilde{\hat D}}$ and that of $\tD$ allow to finish the proof, since the real number $x$ is unchanged in this bundle modification relation.

That the map $\xi$ is a group morphism is then straightforward.

\end{proof}

\section{Some corollaries}\label{Applications}

In the rest of the paper, we shall identify the geometric and analytic $\ell^2$ structure  groups through the isomorphism of Theorem \ref{Isomorphism} in the previous section.

\begin{definition}
Using  the group morphism $\xi: \maS_1^{(2), geo} (\Gamma)\to \R$ described above, we end up with a group morphism
$$
\xi_{(2)}:= \xi \circ \alpha_*: \maS_1 (\Gamma) \longrightarrow  \R.
$$
\end{definition}

\subsection{PSC and vanishing of the $\ell^2$ rho invariant}

Assume that $M$ is a closed odd dimensional spin manifold which has a metric of positive scalar curvature and let $f:M\to B\Gamma$ be a classifying map for the $\Gamma$-cover $\tM\to M$. The Dirac operator $D$ for the prescribed spin structure yields a regular elliptic operator $\maD_{max}$ in the maximal Mishchenko calculus whose spectrum is, by the Lichnerowicz formula, contained in an interval of the form $\R\smallsetminus [-\ep, +\ep]$ for some $\ep >0$. Therefore the {\underline{continuous}} functional calculus allows to define the class $[\chi_\geq (\maD)]$ in the Higson-Roe structure group $\maS_1(\Gamma)$. This class  is a pre image of the $K_1$ class of $D$ in the Higson-Roe exact sequence. See \cite{HigsonRoe2010} for more details.

\begin{proposition}
The real number $\xi_{(2)} ([\chi_\geq (\maD)])$ coincides with half the Cheeger-Gromov rho invariant of $D$, i.e.
$$
\xi_{(2)} ([\chi_{\geq} (\maD)]) = \frac{1}{2} \left(\eta_{(2)}(\tD) - \eta (D)\right).
$$
\end{proposition}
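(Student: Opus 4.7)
The plan is to unwind the two layers of identification defining $\xi_{(2)}$ and then invoke the Lichnerowicz argument to discard the kernel contributions.

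First, I would compute $\alpha_*[\chi_\geq(\maD)]$ explicitly. By the definition of $\alpha$ in Subsection on Compatibility of the exact sequences, this class is represented by the pair
\[
\left(\Psi_{\reg}(\chi_\geq(\maD)\otimes_{\pi_{\reg}} I)\Psi_{\reg}^{-1},\ \Psi_{\av}(\chi_\geq(\maD)\otimes_{\pi_{\av}} I)\Psi_{\av}^{-1}\right).
\]
Using the isomorphisms $\Psi_{\reg}$ and $\Psi_{\av}$ recalled in Propositions on the Hilbert module $\maE_{S,\Gamma}$, and the fact that continuous functional calculus commutes with the representations $\pi_{\reg}$ and $\pi_{\av}$ (which are $\ast$-homomorphisms), the image of $\maD$ through the regular representation coincides with $\tD$ on $L^2(\tM,\tS)$ and through the average representation with $D$ on $L^2(M,S)$. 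The Lichnerowicz formula guarantees that $\maD$, $\tD$ and $D$ all have spectrum in $\R\setminus(-\varepsilon,\varepsilon)$, so the continuous functional calculus for $\chi_\geq$ is unambiguous and compatible with both representations. Hence
\[
\alpha_*[\chi_\geq(\maD)] = [\chi_\geq(\tD),\chi_\geq(D)] \quad \in \maS_1^{(2)}(\Gamma).
\]

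Second, I would identify this analytic class with a geometric one through Theorem~\ref{Isomorphism}. By the very definition of the map $[M,S,f,D,x]\mapsto f_*[\chi_\geq(\tD),\chi_\geq(D)]+[x]$, the pair above is exactly the image of the geometric cycle $(M,S,f,D,0)$. Therefore
\[
\xi_{(2)}([\chi_\geq(\maD)]) = \xi\bigl([M,S,f,D,0]\bigr) = \rho_{(2)}(M,S,f,D) + 0 = \tfrac{1}{2}\bigl[(\eta_{(2)}(\tD)-\eta(D)) + (\tilde h - h)\bigr].
\]

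Third, I would kill the kernel correction term using positive scalar curvature. The spin structure and the PSC metric on $M$ lift to a spin structure and PSC metric on the Galois cover $\tM$ with the same positive lower bound $\kappa>0$ on the scalar curvature. Lichnerowicz's identity $D^2 = \nabla^*\nabla + \tfrac{\kappa}{4}$ (and its $\Gamma$-invariant lift on $\tM$) gives $\Ker D = 0$ and $\Ker \tD = 0$ in $L^2$, whence $h = 0$ and $\tilde h = \dim_{(2)}\Ker \tD = 0$. Substituting back yields the claimed formula $\xi_{(2)}([\chi_\geq(\maD)]) = \tfrac{1}{2}(\eta_{(2)}(\tD)-\eta(D))$.

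The only subtle step is the first one, namely the rigorous verification that $\alpha$ applied to the abstract functional calculus element $\chi_\geq(\maD)$ produces exactly the pair $(\chi_\geq(\tD),\chi_\geq(D))$; this requires checking that the lift of $\chi_\geq(\maD)$ to the Mishchenko module, followed by the internal tensor products with $\pi_{\reg}$ and $\pi_{\av}$, intertwines through $\Psi_{\reg}$ and $\Psi_{\av}$ with the corresponding spectral projectors. This is straightforward once one observes that $\maD$ itself is the unique regular lift of $(\tD,D)$ and that bounded Borel functional calculus is natural with respect to the two induction maps, but it is the only point where the identifications of Section~\ref{Background} must be used in earnest.
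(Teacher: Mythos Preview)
Your proof is correct and follows essentially the same approach as the paper: compute $\alpha_*[\chi_\geq(\maD)]$ as $[\chi_\geq(\tD),\chi_\geq(D)]$ via the spectral gap and compatibility of continuous functional calculus with the two representations, identify this with the geometric cycle $(M,S,f,D,0)$ through Theorem~\ref{Isomorphism}, evaluate $\xi$, and use Lichnerowicz to set $h=\tilde h=0$. One small caution: in your closing remark you invoke ``bounded Borel functional calculus'' on the Hilbert module side, but on $\maE_{S,\Gamma}$ only the continuous functional calculus is available; you had already handled this correctly earlier by noting that the spectral gap makes $\chi_\geq$ continuous on the spectrum, so stick with that formulation.
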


\begin{proof}
Denote by $H$ the Hilbert space of $L^2$ spinors. Then, since the spectra of all operators $\maD, \tD$ and $D$ contain a gap $[-\ep, +\ep]$, the class $\alpha_*[\chi_\geq (\maD)]$ in $\maS_1^{(2)}(M, H)$ is represented by $(\chi_\geq (\tD), \chi_\geq (D))$. This is a consequence of the compatibility of the continuous functional calculus with the composition of Hilbert modules, see for instance \cite{Lance}. Now, the class $[\chi_\geq (\tD), \chi_\geq (D)]$ is clearly the image under the isomorphism of Theorem \ref{Isomorphism} of the $\ell^2$  geometric class represented by $(M, S, f, D, 0)$, where $S$ is the spin bundle.
Therefore, we have
$$
\xi_{(2)} [\chi_\geq (\maD)] = \xi (M,S, f, D, 0) = \rho_{(2)} (f, D) +0= \frac{1}{2} \left(\eta_{(2)} (\tD) +\tilde{h} - \eta(D) - h\right).
$$
Again the positive scalar curvature implies that $h=\tilde{h}=0$ and hence the conclusion.
\end{proof}

\begin{theorem}\label{PSC}\cite{KeswaniVN}
Assume that $M$ is  a closed odd dimensional spin manifold which has a metric of positive scalar curvature and let $f:M\to B\Gamma$ be a classifying map for the $\Gamma$-cover $\tM\to M$. Assume that the assembly map $\mu_{\max}: K_*(B\Gamma) \rightarrow K_*(C^*_{\max}\Gamma)$, appearing in the Higson-Roe exact sequence, is an isomorphism, then the Cheeger-Gromov rho invariant of the spin Dirac operator vanishes.
\end{theorem}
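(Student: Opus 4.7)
The plan is to construct the $\ell^2$ structure class of the spin Dirac operator in the Higson-Roe group $\maS_1(\Gamma)$, compute its image under $\xi_{(2)}$ using the preceding proposition, and then deduce from the maximal Higson-Roe exact sequence together with the isomorphism hypothesis that this structure class is in fact zero.

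First I would invoke the Lichnerowicz-Weitzenb\"ock formula, which in the presence of positive scalar curvature gives a uniform spectral gap $\maD^2 \geq \epsilon^2 > 0$ for the regular Mishchenko-Fomenko operator in the maximal calculus, and simultaneously $\tD^2 \geq \epsilon^2$ and $D^2 \geq \epsilon^2$. Hence $\chi_\geq$ agrees on each spectrum with a continuous function, so the continuous functional calculus in the maximal Mishchenko algebra produces a genuine projection with class $[\chi_\geq(\maD)] \in \maS_1(\Gamma)$, as recalled in the paragraph preceding the theorem. Combining the previous proposition with the PSC-induced vanishings $h = \dim\Ker(D) = 0$ and $\tilde h = \dim_{(2)}\Ker(\tD) = 0$ yields
$$\xi_{(2)}\bigl([\chi_\geq(\maD)]\bigr) \;=\; \frac{1}{2}\bigl(\eta_{(2)}(\tD) - \eta(D)\bigr) \;=\; \rho_{(2)}(M, S, f, D).$$

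Second, I would argue that $[\chi_\geq(\maD)]$ itself vanishes. Consider the relevant piece of the maximal Higson-Roe six-term exact sequence,
$$K_0(B\Gamma) \xrightarrow{\mu_\Gamma} K_0(C^*\Gamma) \longrightarrow \maS_1(\Gamma) \longrightarrow K_1(B\Gamma) \xrightarrow{\mu_\Gamma} K_1(C^*\Gamma).$$
Surjectivity of $\mu_\Gamma$ in degree zero kills the arrow $K_0(C^*\Gamma) \to \maS_1(\Gamma)$, and injectivity of $\mu_\Gamma$ in degree one kills the arrow $\maS_1(\Gamma) \to K_1(B\Gamma)$; exactness at $\maS_1(\Gamma)$ then forces $\maS_1(\Gamma) = 0$. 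The structure class of the spin Dirac operator is a fortiori zero, whence $\rho_{(2)}(M, S, f, D) = 0$.

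No substantially new technical obstacle appears at this stage, since the heavy lifting has been carried out in constructing $\xi_{(2)}$ and in identifying the geometric and analytic structure groups via Theorem \ref{Isomorphism}. The only point requiring genuine care is the upgrade from a Borel projection $\chi_\geq(\tD)$ (which a priori only determines a class in $\maS_1^{(2)}(\Gamma)$) to a continuous-calculus projection $\chi_\geq(\maD)$ in the maximal Mishchenko module, which is precisely what the Lichnerowicz gap affords and what allows the maximal Higson-Roe exact sequence, rather than only its $\ell^2$ analogue, to be brought to bear.
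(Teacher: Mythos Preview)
Your proposal is correct and follows essentially the same approach as the paper: invoke the preceding proposition to identify $\xi_{(2)}([\chi_\geq(\maD)])$ with half the Cheeger-Gromov rho invariant, then use the isomorphism hypothesis on $\mu_\Gamma$ together with the Higson-Roe exact sequence to force $\maS_1(\Gamma)=0$, and conclude via the fact that $\xi_{(2)}$ is a group morphism. Your exposition is in fact more explicit than the paper's about why the exact sequence forces $\maS_1(\Gamma)=0$, spelling out the roles of surjectivity in degree zero and injectivity in degree one.
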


\begin{proof}
Applying the previous proposition together with the Higson-Roe exact sequence we deduce that
$$
\maS_1(\Gamma) = \{0\}\text{ and } \eta_{(2)} (\tD) - \eta (D) = 2  \rho_{(2)} (f, D) \in \xi_{(2)} \left(\maS_1(\Gamma)\right).
$$
Hence using that $\xi_{(2)}$ is a group morphism, we get $\eta_{(2)} (\tD) - \eta (D) = 0$.
\end{proof}

\begin{remark}
When $\Gamma$ is torsion free and a-T-menable, the map $\mu_{\max}: K_*(B\Gamma) \rightarrow K_*(C^*_{\max}\Gamma)$ coincides with the reduced Baum-Connes map which is then known to be an isomorphism, and the theorem applies. Compare \cite{KeswaniVN}.
\end{remark}

\subsection{Homotopy invariance of the Cheeger-Gromov rho}\label{SectionHomotopy}

We now deduce the homotopy invariance of the $\ell^2$ rho invariant for the signature operator. Let $M$ and $M'$ be closed, odd-dimensional, oriented smooth riemannian manifolds equipped with an orientation preserving homotopy equivalence $F:M\rightarrow M'$.  Let $f':M'\rightarrow B\Gamma$ be classifying map for the $\Gamma$-covering $\tM '\to M'$ and set $f:= f'\circ F: M\to B\Gamma$ and denote by $\tM\to M$ the associated $\Gamma$-covering.

Let $S$ (resp. $S'$) be in this section the Clifford bundle of even degree differential forms on $M$ (resp. $M'$). We shall associate with this data an analytic class $[F,f,f']\in \maS_1(\Gamma)$. This is exactly the same construction as in \cite{HigsonRoe2010}, we give it here for completeness.

Let us recall first the alternative description of $K_0(A)$ for a unital $C^*$-algebra $A$ with a closed ideal $J$. Denote by $\pi:A\rightarrow A/J$ the obvious projection map. By abuse of notation we will also denote by $\pi$ all the induced maps $M_n(A)\rightarrow M_n(A/J)$ for all $n>0$.

\begin{definition}
\label{checkK}
$\check{K}_0(A)$ is defined to be the Grothendieck group of homotopy classes of triples $(\gamma,\tilde{p},p)$ where:
\begin{enumerate}
\item $p\in M_\infty(A/J)$ is a projection.
\item $\tilde{p}\in M_\infty(A)$ is a lift of $p$, i.e. $\pi(\tilde{p})=p$
\item $\gamma:[0,2]\rightarrow GL_\infty(J)$ is a path such that $\gamma(0)=\exp(2\pi i \tilde p)$ and $\gamma(2)= I$.
\end{enumerate}
\end{definition}
\begin{remark}
$GL_n(J)$ is the algebra of invertible operators in $M_n(A)$ which are identity modulo $M_n(J)$, and $GL_\infty(J)$ is endowed with the usual inductive limit topology, under such a topology each path $\gamma$ lies entirely in some $GL_n(J)$.
\end{remark}

\begin{remark}
By a homotopy of triples $(\gamma,\tilde{p},p)$ we mean the following: two triples $(\gamma_0,\tilde{p}_0,p_0)$ and $(\gamma_1,\tilde{p}_1,p_1)$ are homotopy equivalent if there exist:
\begin{enumerate}
\item a path of projections $p_s, s\in [0,1]$ in $M_\infty(A/J)$ connecting $p_0$ and $p_1$,
\item $\tilde{p}_s\in M_\infty(A)$ is a lift of $p_s$ ,
\item for each $s\in [0,1]$, $\gamma_s:[0,2]\rightarrow GL_\infty(J)$ is a path such that $\gamma_s(0)=\exp(2\pi i \tilde p_s)$ and $\gamma_s(2)=I$.
\end{enumerate}

\end{remark}
We can now define a map $\psi: K_0(A)\rightarrow \check{K}_0(A)$ by setting for any $q\in \Proj_n(A)$,
$$
\psi([q]):= [(\gamma,q,\dot{q})]
$$
where
\begin{enumerate}
\item $\dot{q}$ is the image of of $q$ in $\Proj_n(A/J)$,
\item $\gamma(t)=I$ for all $t\in [0,1]$.
\end{enumerate}

Notice that for a projection $q\in M_n(A)$ one has $\exp(2\pi iq)=I$.

\begin{lemma}
The map $\psi$ is an isomorphism of the abelian groups $K_0(A)$ and $\check{K}_0(A)$.
\end{lemma}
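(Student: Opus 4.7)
The plan is to verify that $\psi$ is a well-defined homomorphism and then construct an explicit two-sided inverse $\phi: \check{K}_0(A) \to K_0(A)$.

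First, I would check well-definedness and additivity. Since $q^2 = q$ for a projection, the holomorphic functional calculus gives $\exp(2\pi i q) = I$, so the constant path $\gamma \equiv I$ makes $(\gamma, q, \dot q)$ a valid triple in the sense of Definition~\ref{checkK}. A Murray--von Neumann equivalence between $q_0$ and $q_1$ in $M_\infty(A)$ produces a continuous path of projections, and the induced homotopy of triples (with all $\gamma_s$ constant) satisfies the equivalence of Definition~\ref{checkK}. Additivity $\psi([q] + [q']) = \psi([q \oplus q'])$ is immediate from block-diagonal sum of projections and of triples.

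For the inverse, I would define $\phi([(\gamma, \tilde p, p)]) \in K_0(A)$ via the concatenated path $\mu: [0,3] \to GL_\infty(A)$ given by $\mu(t) = \exp(2\pi i t \tilde p)$ for $t \in [0,1]$ and $\mu(t) = \gamma(t-1)$ for $t \in [1,3]$. This is a based loop at $I$, and the suspension isomorphism $\pi_1(GL_\infty(A)) \cong K_1(SA) \cong K_0(A)$ converts $[\mu]$ into the desired class. Conceptually, the six-term exact sequence for $0 \to J \to A \to A/J \to 0$,
\[
K_1(A) \to K_1(A/J) \xrightarrow{\partial_1} K_0(J) \to K_0(A) \xrightarrow{\pi_*} K_0(A/J) \xrightarrow{\partial_0} K_1(J),
\]
gives $\partial_0[p] = [\exp(2\pi i \tilde p)] \in K_1(J)$, and the path $\gamma$ certifies that this is trivial. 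Hence $[p]$ lifts to $K_0(A)$, with the specific lift pinned down by $\gamma$ modulo the ambiguity coming from $\mathrm{image}(\partial_1)$; the homotopy relation on triples is designed to absorb exactly this ambiguity.

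Verification of $\phi \circ \psi = \mathrm{id}$ is immediate: for a projection $q$, the first segment of $\mu$ is the standard Bott loop representing $[q]$ and the second segment is constant, so the suspension class is $[q]$. For $\psi \circ \phi = \mathrm{id}$, I would produce a homotopy of triples from $(\gamma, \tilde p, p)$ to $(I, q, \dot q)$ where $q \in M_\infty(A)$ is a projection lift of $p$ realizing $\phi([(\gamma, \tilde p, p)])$: since both $\tilde p$ and $q$ lift $p$, the affine path $s \mapsto (1-s)\tilde p + sq$ lifts $p$ throughout, and $\gamma$ can be simultaneously deformed to the constant path because the resulting loop stays in the same suspension class. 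The main obstacle is precisely showing that $\phi$ is well-defined on homotopy classes of triples: a homotopy $(\gamma_s, \tilde p_s, p_s)$ must induce a homotopy of loops $\mu_s$ in $GL_\infty(A)$ whose suspension class is constant. Once this well-definedness is established, both compositions are essentially formal, and the Five Lemma-style comparison between $\check K_0$ and $K_0$ becomes a direct consequence.
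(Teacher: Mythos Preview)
The paper does not actually prove this lemma; its entire proof reads ``We refer to Lemma 7.4, \cite{HigsonRoe2010} for a detailed proof.'' So there is nothing in the paper to compare your argument against directly.

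That said, your outline is the standard one and is essentially what one finds in Higson--Roe: build the inverse $\phi$ by concatenating the Bott-type path $t\mapsto \exp(2\pi i t\,\tilde p)$ with the given path $\gamma$ to obtain a based loop in $GL_\infty(A)$, and then invoke the isomorphism $\pi_1(GL_\infty(A))\cong K_0(A)$. Your check that $\phi\circ\psi=\mathrm{id}$ via the Bott loop is correct, and your observation that $\pi_*\circ\phi$ recovers $[p]$ (because $\gamma$ becomes constant modulo $J$) is the right way to see that the construction is compatible with the quotient.

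One point deserves a bit more care than you give it. In the verification of $\psi\circ\phi=\mathrm{id}$ you assert the existence of a \emph{projection} $q\in M_\infty(A)$ with $\pi(q)=p$. In general $p$ need not lift to a projection on the nose; what you actually get from $\partial_0[p]=0$ is a projection $q$ with $[\pi(q)]=[p]$ in $K_0(A/J)$. You then need to use the homotopy of triples (item (1) in the definition allows you to move $p$ along a path of projections) to connect $p$ to $\pi(q)$ before running the affine interpolation of lifts. This is routine once noticed, but as written your sentence ``$q\in M_\infty(A)$ is a projection lift of $p$'' overstates what is available. With that adjustment, the argument goes through.
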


\begin{proof}
We refer to Lemma 7.4, \cite{HigsonRoe2010} for a detailed proof.
\end{proof}

  Let $X$ be a finite $CW$-complex and $H$ be a separable Hilbert space with an ample representation of $C(X)$. Using the above alternative description of $K_0$, we can describe the structure groups $\maS_{1,\Gamma}(X,H)$ by taking $A=D^*_\Gamma(X)$ and $J=\maK_{C^*\Gamma}(\maE_{S,\Gamma})$ in definition (\ref{checkK})(see sections 2 and 3 for notations). Then any class in $\maS_{1,\Gamma}(X,H)$ is given by:

\begin{enumerate}
\item a projection $P$ in $Q^*_H(X)\simeq Q^*_\Gamma(X)=D^*_\Gamma(X)/\maK_{C^*\Gamma}(\maE_{S,\Gamma})$,
\item a lift $\maP$ of $P$ to $D^*_\Gamma(X)$,
\item a path $\gamma$ connecting $\exp(2\pi i \maP)$ to the identity via invertible operators in $\maL(\maE_{S,\Gamma})$ that are compact perturbations of the identity on $\maE_{S,\Gamma}$.
\end{enumerate}

We will now describe briefly the construction of the class $[F,f,f']\in \maS_1(\Gamma)$. Let us consider the geometric cycle $(M,S,f)\sqcup -(M',S',f')$. We now choose $X$ to be a finite subcomplex of $B\Gamma$ such that $f(M)\sqcup f'(M')\subseteq X$; we shall initially define the class $[F,f,f']$ as an element in $\maS_{1,\Gamma}(M\sqcup M', L^2(M,S)\oplus L^2(M',S'))$, and then using the covariant functor $\maS_1$, along with an ample representation $H$ of $C(X)$, map it to $\maS_{1,\Gamma}(X,H)$. This mapping will be assumed implicitly in the sequel.

Let $d$ (resp. $d'$) and $*$ (resp. $*'$) be the de Rham differential operator and Hodge $*$-operator respectively on $M$ (resp. $M'$). Since $S$ is taken to be the exterior algebra bundles one can define the odd signature operator:

\begin{definition}
 The signature operator on even degree forms is given by
\begin{equation*}
D^{sign}_{even}=i^{m+1}(-1)^{k+1}(*d-d*) \text{ for forms of degree
$2k$}
\end{equation*}
where $\dim M=2m+1$.
\end{definition}
 We shall denote the signature operator on $M$ and $M'$ simply by $D$ and $D'$, respectively. Let $\tD$ (resp. $\tD'$) be the $\Gamma$-equivariant lifts of $D$ (resp. $D'$) to $\tM$ (resp. $\tM'$). The lifted operator $\tD$ induces an unbounded regular operator $\maD$ on the Hilbert $C^*\Gamma$-module $\maE_{S,\Gamma}$. The de Rham operator $d$ induces a regular operator $d_\Gamma$ and the Hodge $*$-operator induces an adjointable operator $\star$ on $\maE_{S,\Gamma}$.
Define operators $\maJ = i^{p(p-1)+m} \star$ on forms of degree $p$, and $\maB=d_\Gamma+d^*_\Gamma$ on $\maE_{S,\Gamma}$. $\maJ$ is a self-adjoint involution which anticommutes with $\maB$. Then we have the following relations:
$$\maD=i\maB\maJ,\quad (\maD-iI)(\maD+iI)^{-1}=(\maB-\maJ)(\maB+\maJ)^{-1}$$

We call $(\maD-iI)(\maD+iI)^{-1}$ the Cayley transform of $\maD$.

We now consider the operator $P=\frac{1}{2}(sign(D)+I)$ viewed as a projection in $Q^*_{L^2(M,S)}(M)$, which determines the $K$-homology class of $D$. We define
$$\maP=\frac{1}{2}(\frac{2}{\pi}\arctan(\maD)+I)$$
The operator $\maP$ is then a lift of $P$ to $D^*_\Gamma(M)$. Thus to define a structure class associated with the cycle $(M,S,f)$ one needs a path $\gamma$ connecting the identity to $\exp(2\pi i\maP)=(\maD-iI)(\maD+iI)^{-1}$.

 Let $P',\maP',\maB',\maJ',\maD'$ be the analogously defined operators on $(M',S',f')$. We also define:
\begin{enumerate}
\item the operator $\hat{P}=P\oplus -P'$,
\item a lift $\hat{\maP}=\maP\oplus -\maP'$ of $\hat{P}$
\end{enumerate}
Finally, to define a structure class $[F,f,f']$ associated with the geometric cycle $(M,S,f)\sqcup -(M',S',f')$ and homotopy equivalence $F:M\rightarrow M'$, we need to connect the Cayley transform of $\hat{\maD}:=\maD\oplus -\maD'$ to the identity on $\maE_{S,\Gamma}\oplus \maE'_{S',\Gamma}$. We define an adjointable map on Hilbert modules
$$\maA= \exp(-\maD^2)\maF\exp(-\maD'^2): \maE'_{S',\Gamma}\rightarrow \maE_{S,\Gamma}$$

where $\maF$ is the operator induced by $F$ on the level of Hilbert modules. The above operator $\maA$ gives a homotopy equivalence of Hilbert-Poincar\'{e} complexes in the sense of \cite{HigsonRoe} (see also \cite{BR}) and therefore induces an isomorphism on the (unreduced) de Rham cohomology groups. Let $\hat{\maB}:= \maB\oplus -\maB'$.

\begin{definition}
Define the path $\Sigma_t$ of duality operators (again in the sense of \cite{HigsonRoe}) on the Hilbert-Poincar\'{e} complex associated with $(M,S,D,f)\sqcup-(M',S',D',f')$ as follows:
\begin{equation}
\Sigma_t=\left\{ \begin{array}{ccc} \left(\begin{array}{ccc} \maJ & 0 \\ 0 & -2t\maA\maJ\maA^*+(2t-1)\maJ'\end{array}\right) \text{ for } t\in [0,\frac{1}{2}]\\
\left(\begin{array}{ccc} \sin(\pi t)\maJ&\cos(\pi t)\maJ\maA^*\\ \cos(\pi t)\maA\maJ &-\sin(\pi t)\maA\maJ\maA^*\end{array}\right) \text{ for } t\in [\frac{1}{2},1]\\
\left(\begin{array}{ccc} 0 & \exp(\pi it)\maJ\maA^*\\ \exp(-\pi it)\maA\maJ &0\end{array}\right) \text{ for } t\in [1,2]\\
\end{array}\right.
\end{equation}
\end{definition}

We then define the path $\hat{\sigma}_t$ of unitaries as follows:

\begin{definition}
Define the path $\hat{\sigma}_t$ as follows:
\begin{equation}
\hat{\sigma}_t=\left\{ \begin{array}{ccc} (\hat{\maB}-\Sigma_t)(\hat{\maB}+\Sigma_t)^{-1} \text{ for } t\in [0,1]\\
(\hat{\maB}-\Sigma_1)(\hat{\maB}+\Sigma_t)^{-1} \text{ for } t\in [1,2]\\
\end{array}\right.
\end{equation}
\end{definition}

Therefore the class $[F,f,f']\in \maS_1(\Gamma)$ is given by the projections $\hat{P}$, $\hat\maP= \frac{1}{2}(\frac{2}{\pi}\arctan(\hat{\maD})+1)$ and the path $\hat{\sigma}_t$ connecting the Cayley transform $(\hat\maD-i)(\hat\maD+i)^{-1}$ to the identity. Note that the path $\hat{\sigma}_t$ consists of invertible operators which are compact perturbations of the identity, cf. \cite{HigsonRoe}.

The image under $\alpha_*$ of the analytic class $[F,f,f']$ in the $\ell^2$ structure group is given by the class, denoted by $H$, of the couple $(\tilde{P}\oplus -\tP',P\oplus -P')$ together with the pair of paths $(\tilde{\sigma}_t,\sigma_t)$ connecting the pair of Cayley transforms $((\tD-i)(\tD+i)^{-1}\oplus (\tD'+i)(\tD'-i)^{-1},(D-i)(D+i)^{-1}\oplus (D'+i)(D'-i)^{-1})$ to the pair $(Id_{L^2(\tilde{M},\tilde{S})}\oplus Id_{L^2(\tilde{M}',\tilde{S}')}, Id_{L^2(M,S)}\oplus Id_{L^2(M',S')})$.

Our goal will be to show the following lemma:
\begin{lemma}
\label{analyticgeometricclass}
Under the isomorphism given in Theorem (\ref{analyticGeometric}), the class $H \in \maS_1^{(2)}(\Gamma)$ is the analytic structure class of the geometric cycle
$$ (M, S, f, D, h-\tilde h)\sqcup -(M',S',f',D',h'-\tilde h')=(M, S, f, D, h-\tilde h)\sqcup (M',-S',f',-D',0)$$
\end{lemma}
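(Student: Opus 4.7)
The plan is to verify the identity in $\maS_1^{(2)}(\Gamma)$ by first showing both sides lie in the subgroup $\R\hookrightarrow\maS_1^{(2)}(\Gamma)$ of Corollary~\ref{SES2}, and then identifying them as the same real number.

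First I would check that both classes project to zero in $K_1(B\Gamma)$. The analytic class of the cycle maps to $f_*[D]+f'_*[-D']=f_*[D]-f'_*[D']$, since the opposite cycle $(M',-S',f',-D')$ carries the opposite $K$-homology class. Because $F$ is an orientation-preserving homotopy equivalence, $F_*[D]=[D']$ in $K_1(M')$, so $f_*[D]=f'_*F_*[D]=f'_*[D']$ and the image vanishes. The class $H=\alpha_*[F,f,f']$ has the same vanishing image by commutativity of diagram~\eqref{beta} together with the $K_1$-image computation in \cite{HigsonRoe2010}. Hence both classes sit in the image of $\R\hookrightarrow\maS_1^{(2)}(\Gamma)$, and it remains to match the corresponding real numbers.

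Next I would compare the two $\check{K}_0$-representatives. Under the isomorphism $\psi$, the analytic class of $(M,S,f,D,h-\tilde{h})\sqcup(M',-S',f',-D',0)$ is represented by the triple with constant path and lift $(\chi_{\geq}(\tD)\oplus\chi_{\geq}(-\tD'),\,\chi_{\geq}(D)\oplus\chi_{\geq}(-D'))$, augmented by the summand $[h-\tilde{h}]\in\R$. The class $H$ is represented by the triple $(\hat{\sigma}_t,\,(\tilde{P}\oplus-\tilde{P}',\,P\oplus-P'),\,(\dot{\hat{\maP}},\dot{\hat{\maP}}))$ with the $\arctan$-lift. To compare them I would linearly interpolate $\tilde{P}_s:=(1-s)\tilde{P}+s\,\chi_{\geq}(\tD)$ (and its primed analogue), producing a path $\exp(2\pi i\tilde{P}_s)$ from the Cayley transform $(\tD-i)(\tD+i)^{-1}$ to the identity; concatenating with the reverse of $\hat{\sigma}_t$ yields a loop in $GL_\infty$ of the ideal $\maK(\maM_X,\tau)\oplus\maK(H)\cong\R\oplus\Z$, whose $K_0$-class evaluated under $\delta(x,n)=x-n$ is precisely the real-number discrepancy between the two classes.

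The main obstacle is to verify that this discrepancy equals the $x$-component $h-\tilde{h}$ of the geometric cycle. My strategy is to transport the Higson--Roe $\check{K}_0$-homotopy of \cite{HigsonRoe2010} to the semi-finite setting, exploiting that $\hat{\sigma}_t$ was built from the Hodge--Poincar\'e duality operators $\Sigma_t$ and the homotopy-equivalence operator $\maA$ precisely to absorb the spectrally symmetric contribution on $\Ker(\tD)^\perp$ (where $\chi_{\geq}(\tD)-\tilde{P}$ has vanishing $K_0$-class because $\tfrac{1}{2}(\sign(\lambda)-\tfrac{2}{\pi}\arctan(\lambda))$ is an odd function of $\lambda$). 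On the $M'$-side the identity $\chi_{\geq}(-\tD')=I-\chi_{\geq}(\tD')+P_{\Ker\tD'}$ absorbs the kernel shift into the spectral projection itself, so the only residual contribution to the loop-class comes from $\Ker\tD$ and $\Ker D$, producing via $\tau$ and $\Tr$ a class in $\R\oplus\Z$ whose image under $\delta$ coincides with $h-\tilde{h}$. Injectivity of $\R\hookrightarrow\maS_1^{(2)}(\Gamma)$ then completes the proof.
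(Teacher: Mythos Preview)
Your reduction strategy---showing both classes project to zero in $K_1(B\Gamma)$ and hence lie in the image of $\R$---is correct as far as it goes, and it is a reasonable framing. But the heart of the lemma is computing which real number each class corresponds to, and your method for this has a genuine gap.

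The problem is in your loop argument. Concatenating paths in $GL_\infty(J)$, where $J=\maK(\maM_X,\tau)\oplus\maK(H)$, produces a class in $K_1(J)$, not $K_0(J)$; and by Lemma~\ref{K} we have $K_1(J)=0$, so this loop carries no information whatsoever. The discrepancy between two triples $(\gamma_0,\tilde{p}_0,p)$ and $(\gamma_1,\tilde{p}_1,p)$ in $\check{K}_0$ is a $K_0(J)$-class, but it is not extracted by a loop winding number: it comes from tracking how the lift changes along the homotopy of triples, which is precisely what the explicit computation must do. Your final paragraph effectively acknowledges this by deferring to ``transport the Higson--Roe $\check{K}_0$-homotopy to the semi-finite setting,'' but that transport \emph{is} the proof, and you have not performed it.

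The paper's proof does exactly this explicit homotopy. It first replaces the smoothing operator $\maA$ by the harmonic projections $\tilde{\Pi}\tilde{F}^*\tilde{\Pi}'$ via linear homotopy, then splits $L^2$ orthogonally into harmonic forms and their complement. On the non-harmonic part the path $\tilde{\sigma}_t$ is homotoped (with explicit formulas) to the path arising from $\frac{2}{\pi}\arctan(s^{-1}\tD)$, which strongly converges as $s\to 0$ to the constant identity path with projection $\chi_>(\tD)\oplus\chi_>(-\tD')$. On the harmonic part, an explicit deformation of both the lift and the path produces the projection $0\oplus\tilde{I}'$. Recombining and rewriting $\chi_>$ as $\chi_\geq$ minus the kernel projection yields the correction term $[-\tilde{\Pi},-\Pi]=[h-\tilde{h}]$. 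Your ``odd function'' remark captures only the symmetry of the non-harmonic contribution; the actual $h-\tilde{h}$ arises from the harmonic-forms homotopy, which you have not addressed.
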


\begin{remark}
A relevant feature that differs from the corresponding statement in \cite{HigsonRoe2010} is our use of the the ``shifted" cycle $(M, S, f, D, h-\tilde h)\sqcup -(M',S',f',D',h'-\tilde h')$ to identify the image of the class $[F,f,f']$ in $\maS_1^{(2)}(\Gamma)$. This is due to the appearance of the spectral projections $\chi_\geq(D)$, instead of $\chi_>(D)$ as in \cite{HigsonRoe2010}, in our definition of the analytic structure class of a geometric cycle.
\end{remark}

Recall the map $\xi_{(2)}: \maS_1(\Gamma)\rightarrow \maS_1^{(2)}(\Gamma)\simeq \maS_1^{(2),geo}(\Gamma)\xrightarrow{\xi}\R$. Recall that the $\ell^2$ rho invariant of $D$ is the real number $\rho_{(2)} (D):= \eta_{(2)} (\tD) - \eta (D)$. As a corollary of Lemma (\ref{analyticgeometricclass}) we obtain the following important theorem on the homotopy invariance of Cheeger-Gromov rho-invariants:

\begin{theorem}\label{Homotopy}\cite{KeswaniVN}
Assume the data $(M,M',S,S',F,f,f',D,D')$ given above. Assume also that the assembly map $\mu_\Gamma$ described in Section \ref{analytic}  is an isomorphism. Then we have
$$
\rho_{(2)}(D)=\rho_{(2)}(D').
$$
\end{theorem}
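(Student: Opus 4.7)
The plan is to exploit the hypothesis that $\mu_\Gamma$ is an isomorphism to kill the analytic structure class $[F,f,f'] \in \maS_1(\Gamma)$, and then transport this vanishing to the geometric $\ell^2$-structure group, where it reads off directly as the desired equality of Cheeger--Gromov rho invariants. First, from the Higson--Roe exact sequence
\[
K_1(B\Gamma) \xrightarrow{\mu_\Gamma} K_1(C^*\Gamma) \longrightarrow \maS_1(\Gamma) \longrightarrow K_0(B\Gamma) \xrightarrow{\mu_\Gamma} K_0(C^*\Gamma),
\]
the fact that $\mu_\Gamma$ is bijective in both parities gives $\maS_1(\Gamma) = 0$, and a fortiori $\xi_{(2)}([F,f,f']) = 0$.

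Second, by Lemma \ref{analyticgeometricclass} combined with the isomorphism $\maS_1^{(2), geo}(\Gamma) \cong \maS_1^{(2)}(\Gamma)$ of Theorem \ref{Isomorphism}, the class $\alpha_*[F,f,f']$ is represented by the geometric cycle
\[
(M, S, f, D, h - \tilde h) \sqcup (M', -S', f', -D', 0).
\]
Applying the morphism $\xi$ from Proposition \ref{RelativeEta}, using additivity under disjoint union and the defining formula $\xi(M,S,f,D,x) = \rho_{(2)}(M,S,f,D) + x$, the vanishing of $\xi_{(2)}([F,f,f'])$ becomes
\[
0 = \rho_{(2)}(M,S,f,D) + (h - \tilde h) + \rho_{(2)}(M', -S', f', -D').
\]

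Third, since flipping the Clifford multiplication sends $D'$ to $-D'$, reflects the spectrum of both $D'$ and $\tD'$, and preserves their kernels, a direct inspection of the definition of $\rho_{(2)}$ yields
\[
\rho_{(2)}(M', -S', f', -D') = -\rho_{(2)}(M', S', f', D') + (\tilde h' - h').
\]
Substituting and rearranging we obtain
\[
\rho_{(2)}(M,S,f,D) - \rho_{(2)}(M', S', f', D') = (\tilde h - h) - (\tilde h' - h').
\]
Because $F$ is a homotopy equivalence compatible with $f$ and $f'$, the ordinary Betti numbers match and so $h = h'$, while the lifted $\Gamma$-equivariant homotopy equivalence preserves the $\ell^2$-Betti numbers (Dodziuk--Eckmann--L\"uck), giving $\tilde h = \tilde h'$. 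The right-hand side vanishes and the desired equality $\rho_{(2)}(D) = \rho_{(2)}(D')$ follows.

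The genuine obstacle in this argument is already absorbed into Lemma \ref{analyticgeometricclass}: identifying $\alpha_*[F,f,f']$ with the analytic class of the shifted geometric cycle above requires tracking the path $\hat\sigma_t$ built from the Hilbert--Poincar\'e duality homotopy $\Sigma_t$ through the Cayley transform, and matching it, modulo $\tau$-compact and compact perturbations compatible with the regular and trivial representations, with the pair of spectral projections $(\chi_\geq(\tD)\oplus-\chi_\geq(\tD'),\,\chi_\geq(D)\oplus-\chi_\geq(D'))$. The shift by $h-\tilde h$ in the real parameter accounts precisely for the use of $\chi_\geq$ rather than $\chi_>$ in our analytic class and for the kernel contribution to the Cayley transform at $0$. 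Once that identification is in hand, the rest of the proof, as sketched above, is a short calculation with signs, opposite cycles, and the homotopy invariance of (ordinary and $\ell^2$) Betti numbers.
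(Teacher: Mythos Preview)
Your proof is correct and follows essentially the same route as the paper: deduce $\maS_1(\Gamma)=0$ from the isomorphism hypothesis via the Higson--Roe exact sequence, invoke Lemma~\ref{analyticgeometricclass} to identify $\alpha_*[F,f,f']$ with the analytic class of the shifted geometric cycle, apply $\xi$, and finish by the homotopy invariance of the ordinary and $\ell^2$ Betti numbers. The only cosmetic difference is that the paper expands $\rho_{(2)}(M',-S',f',-D')$ directly in terms of eta invariants, whereas you package the same computation as the identity $\rho_{(2)}(M',-S',f',-D') = -\rho_{(2)}(M',S',f',D') + (\tilde h'-h')$; both lead to the same cancellation.
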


\begin{proof}
We have $\xi_{(2)}([F,f,f'])= \xi((M, S, f, D, h-\tilde h)\sqcup -(M',S',f',D',h'-\tilde h'))$. However, since $\mu_\Gamma:  K_*(B\Gamma) \rightarrow K_*(C^*\Gamma)$ is an isomorphism by hypothesis, we have $\maS_1(\Gamma)=\{0\}$. Thus $\xi_{(2)}([F,f,f'])=0$.
Finally we note that
\begin{eqnarray*}
\xi((M,S,f,D,h-\tilde h)\sqcup(M',-S',f',-D',0))&=&\rho_{(2)}(M, S,f,D)-(\tilde h-h)+\rho_{(2)}(M', -S', f',-D')\\
&=& \frac{1}{2}(\eta_{(2)}(\tilde{D})-\eta(D)+\tilde h-h)- (\tilde h-h)\\&+& \frac{1}{2}(-\eta_{(2)}(\tilde{D}')+\eta(D')+\tilde h'-h')\\
&=& \frac{1}{2}((\eta_{(2)}(\tilde{D})-\eta(D))-(\eta_{(2)}(\tilde{D}')\\&-&\eta(D')))-\frac{1}{2}((\tilde h-\tilde h')-(h-h'))\\
&=& \frac{1}{2}(\rho_{(2)}(D)-\rho_{(2)}(D'))
\end{eqnarray*}
where in the last step we have used the homotopy invariance of the ordinary and $\ell^2$-Betti numbers.
Hence $\rho_{(2)}(D)-\rho_{(2)}(D')=0$.
\end{proof}

We shall now carry out the proof of Lemma (\ref{analyticgeometricclass}). We remark that the arguments in this proof are almost exactly the same as the elegant arguments given in \cite{HigsonRoe2010}, the only new ingredient is the compatibility of functional calculi of the lifted Dirac operator $\maD$ on the Hilbert module $\maE_{S,\Gamma}$ and that of the operator on the covering $\tD$ affiliated to the von Neumann algebra $\maM_M$.  We refer to \cite{BenameurPiazza} for a detailed proof.

\begin{proof}(of Lemma (\ref{analyticgeometricclass}))
Recall that the homotopy equivalence $F: M\rightarrow M'$ induces a map on Hilbert modules
$$\maA= \exp(-\maD^2)\maF\exp(-\maD'^2): \maE'_{S',\Gamma}\rightarrow \maE_{S,\Gamma}$$

Using the isomorphisms $\Psi_{reg}$ and $\Psi_{av}$ we get operators $\tilde{A}: L^2(\tM',\tS')\rightarrow L^2(\tM,\tS)$
and $A: L^2(M',S')\rightarrow L^2(M,S)$.
Since we have passed from Hilbert $C^*$-modules to Hilbert spaces we can use the Borel functional calculi on the von Neumann algebras $\maM_M$ and $B(L^2(M,S))$. In particular the projection $\tilde{\Pi}$ (resp. $\Pi$) onto the space of $\ell^2$-harmonic forms $\tilde{\maH}$(resp. onto the space of harmonic forms $\maH$ on $L^2(M,S)$)  belongs to the von Neumann algebra $\maM_M$ (resp. $B(L^2(M,S))$) and is $\tau$-compact (resp. compact). Let $\tilde{\Pi}'$ and $\Pi'$ be defined analogously on $M'$.
These projections induce the identity on $\ell^2$ and ordinary de Rham cohomologies, respectively.

By considering linear homotopies we can replace $\tilde{A}$ and $A$ in the paths $(\tilde{\sigma}_t,\sigma_t)$ with the operators
$$
\tilde{A}_1= \tilde{\Pi}\tilde{F}^*\tilde{\Pi}' \text{ and } A_1= \Pi F^*\Pi',
$$
where $\widetilde{F}:\tM\rightarrow \tM'$ is the lift of $F: M\rightarrow M'$.

We now consider the decomposition of $(L^2(\tM,\tS)\oplus L^2(\tM',\tS'))\oplus (L^2(M,S)\oplus L^2(M',S'))$ as a direct sum of the spaces of harmonic forms $(\tilde\maH\oplus \tilde\maH')\oplus (\maH\oplus \maH')$ and its orthogonal. On the orthogonal subspace of the harmonic forms we have the following description of the paths $(\tilde{\sigma}_t,\sigma_t)$:
$$
\tilde{\sigma}_t= (\hat{\tilde{\bB}}-\hat{\tilde{\bJ}}_t)(\hat{\tilde{\bB}}+\hat{\tilde{\bJ}}_t)^{-1}, \text{ and }
\sigma_t=(\hat{B}_t-\hat{J}_t)(\hat{B}_t+\hat{J}_t)^{-1}
$$
where $\hat{\tilde{B}}= \left( \begin{array}{ccc} \tilde B& 0\\ 0 & -\tilde B'\end{array} \right)$, and
$\hat{\tilde\bJ}_t$ is given by:

\begin{equation}
\hat{\tilde\bJ}_t=\left\{ \begin{array}{ccc} \left( \begin{array}{ccc} \tilde\bJ & 0\\ 0 & (2t-1)\tilde\bJ'\end{array} \right) \text{ for } 0\leq t\leq \frac{1}{2} \\
\left( \begin{array}{ccc} \sin(\pi t)\tilde\bJ & 0\\ 0 & 0\end{array} \right) \text{ for } \frac{1}{2}\leq t\leq 1\\
\left( \begin{array}{ccc} 0 & 0\\ 0 & 0\end{array} \right) \text{ for } 1\leq t\leq 2\\
\end{array}
\right.
\end{equation}

The same equations hold with the operators without the tilde's.

The path $\tilde\sigma_t$ is homotopic (with fixed end-points) to the path $(\hat{\tilde\bB}-s\hat{\tilde\bJ})(\hat{\tilde\bB}+s\hat{\tilde\bJ})^{-1}, s\in [0,1]$ through the following straight-line homotopy defined for $0\leq s\leq 1, 0\leq u\leq 1$:
$$
\tilde\sigma_u(s,t)=\begin{cases}
\left(\begin{array}{ccc} (us+(1-u))\tilde\bJ & 0 \\ 0& (us+(1-u)(2t-1))\tilde\bJ'\end{array}\right) &\mbox{ for } 0\leq t\leq \frac{1}{2},\\
\left(\begin{array}{ccc} (us+(1-u)sin(\pi t))\tilde\bJ & 0 \\ 0& (us)\tilde\bJ'\end{array}\right) &\mbox{ for } \frac{1}{2}\leq t\leq 1,\\
\left(\begin{array}{ccc} (us)\tilde\bJ & 0 \\ 0& (us)\tilde\bJ'\end{array}\right) &\mbox{ for } 1\leq t\leq 2,
\end{cases}
$$
Let $\tilde I=id_{L^2(\tM,\tS)}, \tilde I' = id_{L^2(\tM',\tS')}$, $I= id_{L^2(M,S)}, I'=id_{L^2(M',S')}$.
 Now, since for any  $\lambda\neq 0$,  $\frac{2}{\pi}\arctan(s^{-1}\lambda)$ is uniformly bounded in $s$ and converges pointwise to $sign(\lambda)$ as  $s\to 0$,
we find that as $s\rightarrow 0$, $(\tilde\bB-s\tilde\bJ)(\tilde\bB+s\tilde\bJ)^{-1}\rightarrow \exp(2\pi i \chi_>(\tD))=\tilde{I}$ strongly on $\tilde\maH^\perp$. Since  for each $s\in [0,1]$,
$$
\frac{1}{2}\left(\frac{2}{\pi}\arctan(s^{-1}(\tilde{D}\oplus -\tilde{D}')+\tilde I), sign(D\oplus -D')+I \right)\in D^*_{(2)}(X,H)
$$
the image under $\alpha_*$ of the structure class $[F,f,f']$ in $\maS^{(2)}_1(\Gamma)$ is represented by
$$
(\chi_>(\tilde{D})\oplus \chi_>(-\tD'),\chi_>(D)\oplus \chi_>(-D')),
$$
together with the constant path $(\tilde I\oplus \tilde I')\oplus (I\oplus I')$ restricted to the orthogonal complement of the harmonic forms.
Notice that this structure class is also represented by the operator pair
$$
(\chi_\geq(\tilde{D})\oplus \chi_\geq(-\tD'),\chi_\geq(D)\oplus \chi_\geq(-D'))- (\tilde{\Pi}\oplus \tilde{\Pi}',\Pi\oplus \Pi')$$
again with the constant path.

On the space of harmonic forms $\tilde \maH$, we get $\tilde{P}=\frac{1}{2}\tilde I, P= \frac{1}{2} I$ and the path $\tilde \sigma_t$ is given by

\begin{equation}
\tilde\sigma_t=\left\{ \begin{array}{ccc}-\tilde I\oplus -\tilde I' \text{ for } 0\leq t\leq 1 \\
-\hat{\tilde{\bJ}}_1\hat{\tilde{\bJ}}_t^{-1} \text{ for } 1\leq t\leq 2
\end{array}\right.
\end{equation}

Noting that $\tilde A_1, \tilde{A}^*_1$ and $\tilde{J}'$ induce isomorphisms on cohomology and that $\tilde{J}'$ is an involution, we find for $t\in [1,2]$,
$$
\tilde\sigma_t=-\hat{\tilde\bJ}_1\hat{\tilde\bJ}_t^{-1}= \left(\begin{array}{ccc} \exp(-\pi i t)\tilde I & 0 \\0 & \exp (\pi i t)\tilde I'\end{array}\right)
$$

We now deform the matrix $\left( \begin{array}{ccc} \frac{1}{2}\tilde I &0 \\ 0 & \frac{1}{2}\tilde I'\end{array}\right)$ and the path $\tilde{\sigma}_t$ for $1\leq t\leq 2$.
For $0\leq s\leq 1$ define the path of operators $\tilde P_s$ as follows:
$$
\tilde P_s= \left( \begin{array}{ccc} \frac{1-s}{2}\tilde I &0 \\ 0 & \frac{1+s}{2}\tilde I'\end{array}\right)
$$

and consider the path of invertible operators

$$
\tilde\sigma_{s}(t) = \left(\begin{array}{ccc} \exp(-2\pi i (s+\frac{(1-s)}{2}t))\tilde I & 0 \\0 & \exp (2\pi i (s+\frac{(1-s)}{2}t))\tilde I'\end{array}\right) \quad \text{  for $1\leq t\leq 2, 0\leq s\leq 1$. }
$$

The path $\tilde P_s$ is a norm continuous path of operators that connects $\tilde{P}\oplus \tilde{P}'$ to the operator $\left(\begin{array}{ccc}0 & 0 \\0 & \tilde I'\end{array}\right)$, with each $(\tilde P_s,P\oplus P')\in D^*_{(2)}(X,H)$, since $\tilde{I}$ is $\Gamma$-compact on the space of harmonic forms. Therefore
$$(\tilde\sigma_{s}, \tilde P_s\oplus (\frac{1}{2}I\oplus \frac{1}{2}I')),\quad s\in [0,1]$$ gives a homotopy of triples leaving the structure class determined by $(\tilde{P}\oplus \tilde{P}',P\oplus P')$ and the path $\tilde\sigma_t$ invariant. Therefore by setting $s=1$, we get that on $\tilde\maH\oplus \tilde\maH'$ the structure class is given by the operator $\left(\begin{array}{ccc}0 & 0 \\0 & \tilde I'\end{array}\right)$ and the constant path.

Combining the two components on $(\tilde\maH\oplus \tilde\maH')\oplus (\maH\oplus \maH')$ and $((\tilde\maH\oplus\tilde\maH')\oplus (\maH\oplus \maH'))^\perp$ we find therefore that the structure class in $\maS_1^{(2)}(\Gamma)$ is given by constant path together with the operator
$$
\left[ \left(\begin{array}{ccc}\chi_\geq(\tD) & 0 \\0 & \chi_\geq(-\tD')\end{array}\right),\left(\begin{array}{ccc}\chi_\geq(D) & 0 \\0 & \chi_\geq(-D')\end{array}\right)\right]+\left[\left(\begin{array}{ccc}-\tilde\Pi & 0 \\0 & 0\end{array}\right),\left(\begin{array}{ccc}-\Pi & 0 \\0 & 0\end{array}\right)\right]
$$

Noting that $[-\tilde\Pi,-\Pi]=[-(\tilde{h}-h)]$, where as before $\tilde{h}=\dim_{(2)}\Ker \tD, h=\dim \Ker D$, we see that this is the structure class of the geometric cycle
$$
(M,S,f,D,h-\tilde h)\sqcup(M',-S',f',-D',0)= (M,S,f,D,h-\tilde h)\sqcup -(M',S',f',D',h'-\tilde h')
$$

This finishes the proof of the lemma.

\end{proof}

\bigskip

\appendix

\section{$K$-theory of $\Gamma$-compact operators}\label{Compact}
We give for the reader's convenience the proof of the following folklore lemma (we thank T. Fack for helpful discussions):

\begin{lemma}
\label{K}
We have the following identification of $K$-theory groups for the $\Gamma$-compact operators in the von Neumann algebra $(\maM_X,\tau)$ described in Section 3:
\begin{enumerate}
\item $K_0(\maK(\maM_X,\tau))\cong \R$
\item $K_1(\maK(\maM_X,\tau))=0$
\end{enumerate}
\end{lemma}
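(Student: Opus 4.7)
My strategy is to use the trace to identify $K_0$ with $\R$, and a standard exponential functional calculus argument for $K_1$, exploiting that $\maM_X \cong B(L^2(X,S)) \otimes \maN\Gamma$ is a properly infinite semifinite von Neumann algebra with faithful normal semifinite trace $\tau = \Tr \otimes \tau_e$. Recall that $\maK(\maM_X, \tau)$ is the norm-closed two-sided ideal generated by the $\tau$-finite projections, or equivalently the ideal of elements whose generalized singular values vanish at infinity. Since $\tau$ is finite and tracial on the linear span of $\tau$-finite projections, the standard construction yields a well-defined group morphism
$$
\tau_* : K_0(\maK(\maM_X, \tau)) \longrightarrow \R, \qquad [e] - [f] \longmapsto \tau(e) - \tau(f),
$$
where $e, f$ are projections in matrix algebras over the unitization and $\tau$ is extended to $M_n$ by $\tau \otimes \Tr_n$.

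Surjectivity of $\tau_*$ follows from the countable infiniteness of $\Gamma$, which forces $\maN\Gamma$ to be diffuse; consequently its finite trace $\tau_e$ attains every value in $[0,1]$ on projections $q \in \maN\Gamma$. For any rank-$n$ projection $p \in B(L^2(X,S))$, the tensor $p \otimes q$ is a $\tau$-finite projection in $\maK(\maM_X, \tau)$ of trace $n \cdot \tau_e(q)$, and letting $n$ and $q$ vary covers all of $[0, \infty)$. Formal differences then exhaust $\R$.

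The main obstacle is injectivity of $\tau_*$: given $\tau$-finite projections $e, f$ in $M_n(\maM_X)$ with $\tau(e) = \tau(f)$, I must produce a partial isometry $v \in M_n(\maK(\maM_X, \tau))$ with $v^*v = e$ and $vv^* = f$. This is a form of the Murray-von Neumann comparison theorem for semifinite von Neumann algebras: in the II$_\infty$ factor case (e.g.\ $\Gamma$ ICC so that $\maN\Gamma$ is a II$_1$ factor) it is immediate; in general I would restrict to a corner $p \maM_X p$ for $p$ a $\tau$-finite projection dominating both $e$ and $f$, observe that this corner is a finite von Neumann algebra with faithful normal finite trace, and apply continuous dimension theory (reducing to a factor by central decomposition if necessary). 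The resulting $v$ lies in $\maK(\maM_X, \tau)$ because $v = ve$ and $e \in \maK(\maM_X, \tau)$.

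For the vanishing of $K_1$, I would invoke the exponential functional calculus. Any unitary $u$ in a matrix algebra over the unitization has the form $u = 1 + k$ with $k \in M_n(\maK(\maM_X, \tau))$; a continuous branch of $\log$ on $\sigma(u) \subset S^1$, obtained after a small homotopy pushing $-1$ out of the spectrum if necessary, provides a self-adjoint $T \in M_n(\maK(\maM_X, \tau))$ with $u = \exp(iT)$. The path $t \mapsto \exp(itT)$, $t \in [0,1]$, then connects $u$ to the identity through unitaries of the required form, giving $K_1(\maK(\maM_X, \tau)) = 0$.
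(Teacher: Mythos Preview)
Your approach parallels the paper's---show that $\tau_*$ is bijective for $K_0$ and use connectedness of invertibles for $K_1$---but both of your arguments have real gaps, and in each case the paper's key extra move is to first reduce via $\maK(\maM_X,\tau)\cong \maK(L^2(X,S))\otimes\maN\Gamma$ and stability of $K$-theory to $K_*(\maN\Gamma)$, and then exploit the direct-integral decomposition $\maN\Gamma=\int^\oplus_Z\maN(\zeta)\,d\nu(\zeta)$ into factors.

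For $K_0$ injectivity: in a finite von Neumann algebra that is not a factor, $\tau(e)=\tau(f)$ does \emph{not} imply Murray--von Neumann equivalence; the obstruction is the center-valued trace (already for $\maN\Z\cong L^\infty(S^1)$, characteristic functions of disjoint sets of equal measure are inequivalent, even after stabilising). Your parenthetical ``reducing to a factor by central decomposition if necessary'' is precisely where the whole argument lives, and you have not carried it out. The paper works fiberwise: in each factor $M_n(\C)\otimes\maN(\zeta)$ comparison of projections is total, so one may (measurably) arrange one family to be a subprojection of the other; positivity of $\nu$ together with equality of the integrated traces then forces $\tau_\zeta(p(\zeta))=\tau_\zeta(q(\zeta))$ a.e., hence fiberwise equivalence, and a measurable selection of implementing partial isometries yields the global equivalence. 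For $K_1$: your ``small homotopy pushing $-1$ out of the spectrum'' is circular---a unitary of the form $1+k$ with $k$ $\tau$-compact can genuinely have spectrum equal to all of $S^1$, and producing such a homotopy is exactly what is at stake. The paper bypasses this entirely by passing to $K_1(\maN\Gamma)$ via stability and using that the invertible group of a unital von Neumann algebra is connected; if you insist on staying in $\maK(\maM_X,\tau)$, replace the continuous logarithm by the \emph{Borel} one (branch cut at $-1$) and check that the resulting self-adjoint $T$ is $\tau$-compact because $\chi_{\{|T|\geq\epsilon\}}(u)$ corresponds to an arc bounded away from $1$ and is therefore $\tau$-finite.
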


\begin{proof}
First note that the ideal of $\tau$-compact operators $\maK(\maM_X,\tau)$ can be identified with $\maK(L^2(X,S))\otimes \maN\Gamma$, where $\maK(L^2(X,S))$ is the ideal of compact operators on the Hilbert space of $L^2$-sections of $S$ over  $X$, and $\maN\Gamma$ is the group von Neumann algebra associated with $\Gamma$.  Let us denote this isomorphism by $\maK(\maM_X,\tau)\xrightarrow{\cong} \maK(L^2(X,S))\otimes \maN\Gamma$.

By the stability of $K$-theory for $C^*$-algebras, we then have $K_i(\maK(\maM_X,\tau))\cong K_i(\maN\Gamma), i=0,1$.

There is a compact metric space $Z$ (the center) with a standard Borel structure $(\mathfrak{B},\nu)$ such that $\nu$ is a positive Radon measure with support $Z$, and there is a direct integral decomposition of $\maN\Gamma$ (cf. \cite{Dixmier}, Chapitre II, page 210):
$$
 \maN\Gamma=\int^\oplus_Z \maN(\zeta) d\nu(\zeta),
 $$
where $\maN(\zeta)$ is a type II$_1$-factor for $\zeta\in Z,\text{ }\nu-a.e.$ Moreover, the trace $\tau$ corresponding to  evaluation at the neutral element $e$ on $\maN\Gamma$ decomposes as well:
$$
\tau= \int^\oplus_Z \tau_\zeta d\nu(\zeta).
$$

Consider the group homomorphism  $\tau_{*}: K_0(\maN\Gamma)\rightarrow \R$ induced  on $K$-theory by the trace $\tau$. This is our required isomorphism.

Let $p,q\in \Proj(M_n(\maN\Gamma))$ be such that $\tau_{ *}([p])=\tau_{ *}([q])$ and set $p=\int^\oplus_Z p(\zeta)d\nu(\zeta)$ and $q= \int^\oplus_Z q(\zeta)d\nu(\zeta)$. We have $p(\zeta),q(\zeta)\in \Proj(M_n(\maN(\zeta)))$ for $\zeta \in Z, \text{} \nu-a.e.$. By a theorem of Murray and von Neumann \cite{MvN}, for any two projections $p(\zeta)$ and $q(\zeta)$ in the factor $M_n(\C)\otimes \maN(\zeta)$, there exists a partial isometry $v(\zeta)\in M_n(\C)\otimes \maN(\zeta)$ such that $p(\zeta)=v(\zeta)v(\zeta)^*$ and $v(\zeta)^*v(\zeta)q(\zeta)=v(\zeta)^*v(\zeta)$ (the $v(\zeta)$'s can be chosen measurably). In particular, since replacing $p(\zeta)$ with its equivalent $v(\zeta)^*v(\zeta)$ leaves the trace invariant, one can assume $p(\zeta)\leq q(\zeta)$ for almost every $\zeta\in Z$. Therefore, as we have
$$
\int_Z \tau_\zeta(p(\zeta)) d\nu(\zeta) = \int_Z \tau_\zeta(q(\zeta)) d\nu(\zeta)
$$
the positivity of $\nu$  implies that, up to replacing almost everywhere $p(\zeta)$ by the equivalent projection $v(\zeta)^*v(\zeta)$, we may assume that  $\tau_\zeta(p(\zeta))=\tau_\zeta(q(\zeta)), \text{ for } \zeta \in Z, \text{ }\nu-a.e.$ Since two projections in a II$_1$-factor are Murray-von Neumann equivalent if and only if their traces are equal, we have that $p(\zeta)\sim_{MvN} q(\zeta)$ for $\nu-a.e.\text{ } \zeta$.  One can choose a measurable field of partial isometries $u(\zeta)\in \maN(\zeta)$ such that $p(\zeta)=u(\zeta)u(\zeta)^*$ and $q(\zeta)=u(\zeta)^*u(\zeta)$ (this requires a measurable selection theorem on analytic subsets of Polish spaces, see for instance \cite{TakesakiI}, Appendix A). The operator $$u=\int^\oplus_Z u(\zeta) d\nu(\zeta)$$ is therefore well-defined (as it is essentially-bounded) and belongs to $M_n(\C)\otimes \maN\Gamma$, and we have
$$ p= uu^*, q= u^*u$$
This gives the required Murray-von Neumann equivalence of projections, implying that $[p]=[q] \in K_0(\maN\Gamma)$. Thus $\tau_{ *}$ is injective.

Since $Z$ is a compact metric space, it has a Lebesgue number $\delta>0$. To show that $\tau_{*}$ is surjective,  it suffices  to show that its range contains the subset $[0,\delta/2]$.  Let $x$ be any number in $[0,\delta/2]$. As $\nu$ is standard, we can consider a measurable subset $Y$ with $\nu(Y)=x$ such that the Hilbert space  $H_\zeta$ is non zero  for $\zeta\in Y$.  Let $p_0$ be a projection defined by
 $$
 p_0 = \int^\oplus_Z p_0(\zeta) d\nu(\zeta)
 $$
where $p_0(\zeta)$ is the measurable family given by $p_0(\zeta)=id_{\maN(\zeta)}$ for $\zeta\in Y$ and zero on $Z\smallsetminus Y$. Then we have
$$
\tau(p_0) =\int_Z \tau_\zeta( p_0(\zeta)) d\nu(\zeta)= \int_Z\chi_Y(\zeta) d\nu(\zeta) = \nu(Y)=x,
$$
where we have used the fact that the traces $\tau_\zeta$ can be chosen to be normalized so that $\tau_\zeta(I)=1$.

The second item follows immediately from the isomorphism $K_1(\maK(\maM_X,\tau))\cong K_1(\maN\Gamma)$. No decomposition theorems are needed here, and we just use the well-known connectedness to the group of invertibles.
\end{proof}

\begin{remark}
The above proof shows of course the same statement for the von Neumann algebra $\maM_{X, H}$ associated with any ample representation in a separable Hilbert space $H$.
\end{remark}

\section{$\tau$-compactness of resolvent}\label{Resolvent}

We first prove an important lemma which extends to the semi-finite setting Lemma 1.2 and Proposition 1.1 in \cite{BaumDouglasTaylor} and which plays an important role in some boundary value constructions. Fix  two separable Hibert spaces $H_0$ and $H_1$ and a $\Z_2$-graded  semi-finite von Neumann algebra $\maM$ which is faithfully represented in $H=H_0\oplus H_1$. We denote by $\maM_0$ and $\maM_1$ the von Neumann subalgebras of $B(H_0)$ and $B(H_1)$ respectively corresponding to the left upper corner and the right lowed corner.  We assume that (and hence each $\maM_i$)  is endowed with  the faithful normal  semi-finite positive trace $\tau$. So,  $\maM$ is semi-finite von Neumann algebra which is faithfully represented in a  $\Z_2$-graded separable Hilbert space $H$ and which inherits a $\Z_2$ grading from the usual one on $B(H)$. Denote as usual by $\maK (\maM, \tau)$  the $C^*$-algebra of $\tau$-compact operators in $\maM$, see for instance  \cite{Benameur03, BenameurFack}.

Let $A$ be a densely defined closed operator from $H_0$ to $H_1$ such that  $A^*A$ (resp. $AA^*$) is affiliated with the von Neumann algebra $\maM_0$ (resp. $\maM_1$).

Denote by $P$ the orthogonal projection onto $\Ker (A)^\perp\subset H_0$, which is then an element of $\maM_0$. Set $B= \left(\begin{array}{cc}   0 & A^* \\ A & 0 \end{array} \right)$. Then the following is an extension to the semi-finite setting of the classical Lemma 1.2 and of part of Proposition 1.1 in \cite{BaumDouglasTaylor}. For the proof of the first item of this lemma, the authors benefited from an encouraging helpful discussion with Georges Skandalis.

\begin{lemma}\label{Lemma9}\
Assume that $AA^*$ has $\tau$-compact resolvent in $\maM_1$. Then
\begin{enumerate}
\item The  operator  $P (A^*A + I)^{-1/2} P$ is $\tau$-compact in $\maM_0$.
\item  Assume in addition that  $\pi: \maU \to \maM$ is a $*$-representation of a $C^*-$algebra $\maU$  through even-graded operators such that for any $c$ is some dense $*$-subalgebra  $\maU_0$ of $\maU$, $\pi (c)$ preserves the domain of $B$ and $[B, \pi (c)]$ extends to a bounded operator in $\maM$. Then for any $c\in \maU$, the operator $[\pi (c), B (I+B^2)^{-1/2}]$ is well defined and is a $\tau$-compact operator in $\maM$.
\end{enumerate}
\end{lemma}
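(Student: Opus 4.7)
The plan is to exploit the polar decomposition of $A$ together with functional calculus. Write $A = V|A|$ with $|A| = (A^*A)^{1/2}$ affiliated with $\maM_0$ and $V \in \maM$ the partial isometry satisfying $V^*V = P$ (projection in $\maM_0$ onto $\Ker(A)^\perp$) and $VV^* = Q$ (projection in $\maM_1$ onto $\overline{\Range(A)}$). The intertwining $V|A| = |A^*|V$ yields, by functional calculus, the identity
$$V^* f(|A^*|) V = P\, f(|A|)\, P$$
for every bounded Borel function $f$ on $[0,\infty)$.

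For part (1), I apply this identity with $f(t) = (1+t^2)^{-1/2}$ to obtain $V^*(I+AA^*)^{-1/2} V = P(I+A^*A)^{-1/2} P$. By hypothesis $(I+AA^*)^{-1} \in \maK(\maM_1,\tau)$, and spectral approximation of the positive square root by polynomials with zero constant term shows $(I+AA^*)^{-1/2} \in \maK(\maM_1,\tau)$. Conjugating by the bounded operators $V$ and $V^*$ keeps the operator in the ideal, so $P(I+A^*A)^{-1/2}P \in \maK(\maM_0,\tau)$.

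For part (2), one \emph{cannot} directly conclude that $B(I+B^2)^{-1/2}$ is $\tau$-compact: the function $t/\sqrt{1+t^2}$ does not vanish at infinity, so the block $A(I+A^*A)^{-1/2}$ generically has spectrum accumulating at $1$ and is not in the ideal. Instead I use the integral representation
$$B(I+B^2)^{-1/2} = \frac{1}{\pi} \int_0^\infty \lambda^{-1/2}\, B R_\lambda\, d\lambda, \qquad R_\lambda := (I+B^2+\lambda)^{-1}.$$
For $c \in \maU_0$, set $C := [\pi(c), B]$, bounded by hypothesis. A direct computation using $[\pi(c), R_\lambda] = -R_\lambda[\pi(c), B^2] R_\lambda = -R_\lambda(CB+BC)R_\lambda$ together with the identity $B R_\lambda B = I - (1+\lambda)R_\lambda$ yields the key algebraic rewriting
$$[\pi(c), B R_\lambda] = (1+\lambda)\, R_\lambda C R_\lambda - R_\lambda (BCB) R_\lambda.$$
I then verify both terms are $\tau$-compact. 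In block form $R_\lambda C R_\lambda$ has each off-diagonal entry carrying a factor $(I+AA^*+\lambda)^{-1} \in \maK(\maM_1,\tau)$, hence is $\tau$-compact. For $R_\lambda BCB R_\lambda$, the identity $A(I+A^*A+\lambda)^{-1} = (I+AA^*+\lambda)^{-1}A$ lets me factor each block entry as a product of two copies of $(I+AA^*+\lambda)^{-1}A$ with a bounded middle factor; writing $A = |A^*|V$ gives
$$(I+AA^*+\lambda)^{-1} A = h_\lambda(|A^*|)\, V, \qquad h_\lambda(t) := \frac{t}{1+t^2+\lambda} \in C_0([0,\infty)),$$
and since $|A^*|$ has $\tau$-finite spectral projections on $[\delta,\infty)$ for every $\delta > 0$ (from the $\tau$-compact resolvent of $AA^*$), $h_\lambda(|A^*|)$ is $\tau$-compact. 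The norm estimates $\|R_\lambda C R_\lambda\| \leq \|C\|(1+\lambda)^{-2}$ and $\|R_\lambda BCB R_\lambda\| \leq \|C\|/(4(1+\lambda))$ --- the latter from $\|B R_\lambda\| \leq 1/(2\sqrt{1+\lambda})$ --- bound the integrand by a constant multiple of $\lambda^{-1/2}(1+\lambda)^{-1}$, which is integrable on $(0,\infty)$. Norm-closedness of $\maK(\maM,\tau)$ then gives $[\pi(c), B(I+B^2)^{-1/2}] \in \maK(\maM,\tau)$ for $c \in \maU_0$; a norm-approximation extends the result to all $c \in \maU$.

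The main subtlety, and the reason the bounded-commutator hypothesis cannot be dispensed with, is precisely the failure of $B(I+B^2)^{-1/2}$ to be $\tau$-compact. The crucial cancellation in the identity $[\pi(c), BR_\lambda] = (1+\lambda) R_\lambda C R_\lambda - R_\lambda BCB R_\lambda$, combined with the fortunate fact that the resolvent-type function $t/(1+t^2+\lambda)$ vanishes at both $0$ and $\infty$ (unlike $t/\sqrt{1+t^2}$ itself), is what forces each integrand into the ideal even though the integrated operator is not.
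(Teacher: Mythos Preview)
Your proof is correct, though it departs from the paper's argument in part~(2).

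For part~(1) both you and the paper use the polar decomposition $A=V|A|$ and the intertwining $V|A|=|A^*|V$; you package this as the single identity $V^*f(|A^*|)V = Pf(|A|)P$ and apply it with $f(t)=(1+t^2)^{-1/2}$, whereas the paper works at the level of spectral projections, showing $\tau\bigl(\chi_{]0,M]}(A^*A)\bigr)=\tau\bigl(\chi_{]0,M]}(AA^*)\bigr)<\infty$. The underlying idea is the same.

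For part~(2) the approaches genuinely differ. The paper splits the commutator along the projection $P_0$ onto $\Ker A$: on $P_0$ one Leibniz expansion collapses to $-(I+AA^*)^{-1/2}A\pi(c)P_0$, which is visibly $\tau$-compact; on $I-P_0$ the other Leibniz expansion yields $[\pi(c),B](I+B^2)^{-1/2}(I-P_0)$ (handled by part~(1)) plus $B[\pi(c),(I+B^2)^{-1/2}](I-P_0)$, and the latter is treated via the \emph{norm-convergent} integral $(I+B^2)^{-1/2}=\tfrac{2}{\pi}\int_0^\infty(I+B^2+\lambda^2)^{-1}\,d\lambda$ following Baum--Douglas--Taylor. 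Your route avoids the $P_0$/$(I-P_0)$ split entirely: you pass the commutator directly inside the integral for $B(I+B^2)^{-1/2}$ and exploit the exact identity $[\pi(c),BR_\lambda]=(1+\lambda)R_\lambda CR_\lambda - R_\lambda BCBR_\lambda$, whose two terms are individually $\tau$-compact with norms bounded by a multiple of $(1+\lambda)^{-1}$. This is algebraically cleaner and makes the cancellation mechanism explicit; the price is a slightly more delicate justification, since your integral $\int_0^\infty \lambda^{-1/2}BR_\lambda\,d\lambda$ converges only strongly (the integrand has norm $\sim\lambda^{-1}$ at infinity). You should say one sentence about why the commutator nevertheless passes inside: the partial integrals converge strongly to $B(I+B^2)^{-1/2}$ while the commuted partial integrals converge in norm, and $\pi(c)$ is bounded.

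One small slip: $\tau$-compact resolvent of $AA^*$ gives $\tau$-finite spectral projections of $|A^*|$ on bounded intervals $[0,M]$, not on $[\delta,\infty)$ as you wrote. This is in fact what you need, since $h_\lambda(t)=t/(1+t^2+\lambda)$ vanishes at infinity; the vanishing at $0$ plays no role.
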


\begin{proof}
For a subset $Y\subseteq \R$ we denote by $\chi_Y$ the characteristic function of $Y$. As before, we simply denote by $\chi_\geq$ and $\chi_>$ the characteristic functions of $[0, +\infty[$ and $]0, +\infty[$ respectively.
First note that $\chi_{]0,M]}=\chi_{>}\chi_{[0,M]}$.

We thus get using that $\chi_{[0,M]}(A^*A)$ preserves $\Ker A^\perp$,
$$
\chi_{]0,M]}(A^*A)=\chi_{[0,M]}(A^*A)|_{\Ker A^\perp}.
$$
Hence
$$
\chi_{[0,M]}(A^*A|_{\Ker A^\perp}) = \chi_{]0,M]}(A^*A).
$$
Set now $A=U|A|$ for the polar decomposition of $A$. Then  the restriction $V$ of $U$ to $\Im(\chi_{]0,M]}(A^*A))$ furnishes a unitary isomorphism $
V: \Im(\chi_{]0,M]}(A^*A))\longrightarrow \Im(\chi_{]0,M]}(AA^*)).$ Since $U^*U$ and $UU^*$ belong to $\maM_0$ and $\maM_1$ respectively,
we deduce that
$$
 \tau \left(\chi_{[0,M]}(A^*A|_{\Ker A^\perp})\right)= \tau \left(\chi_{(0,M]}(AA^*)\right) < +\infty,
 $$
since $AA^*$ has $\tau$-compact resolvent in $\maM_1$ \cite{FackKosaki}. Using again classical arguments, we deduce that $A^*A|_{\Ker A^\perp}$ has $\tau$-compact resolvent in $P\maM_0 P$  and hence in $\maM_0$. This finishes the proof of the first item.

The proof of the second item is inspired from the similar proof in the type I case given in \cite{BaumDouglasTaylor}. We may assume by density of $\maU_0$ and since $\maK (\maM, \tau)$ is closed in $\maM$, that $c\in \maU_0$ so that $\pi(c)$ preserves  the domain of $B$ and yields a bounded commutator. It will then be obvious that for $c\in \maU$, the commutator is well defined as  a choices-independent  limit  which is automatically $\tau$-compact.

We may now write,  on the domain of $B$, the odd operator $[\pi(c), B(I+B^2)^{-1/2}]$ as:
$$
[\pi(c), B(I+B^2)^{-1/2}] = [\pi(c), (I+B^2)^{-1/2} ] B + (I+B^2)^{-1/2} [\pi(c), B].
$$
So, denoting by  $P_0$ the orthogonal projection onto the kernel of $A$ in $H_0$, we have
$$
 [\pi(c), B(I+B^2)^{-1/2}] P_0=  - (I+AA^*)^{-1/2} A \pi (c) P_0.
$$
But  $-A\pi(c) P_0=[\pi (c), B]P_0$ is a bounded operator from $H_0$ to $H_1$ which belongs to $\maM$. Thus, using that $ (I+AA^*)^{-1/2} $ is  $\tau$-compact, we conclude that $[\pi(c), B(I+B^2)^{-1/2}] P_0$ is a $\tau$-compact operator. It remains to treat the operator $[\pi(c), B(I+B^2)^{-1/2}] (I-P_0)$ and we now  use the expression
$$
[\pi(c), B(I+B^2)^{-1/2}](I-P_0)= [\pi (c) , B] (I+B^2)^{-1/2}(I-P_0) + B [\pi(c), (I+B^2)^{-1/2} ] (I-P_0).
$$
Recall that $(I+B^2)^{-1/2} (I-P_0)$ is an even $\tau$-compact operator in $\maM$ by the first item. Therefore the first term in the RHS is $\tau$-compact and we concentrate on the second term. A classical argument using the integral expression of $(I+B^2)^{-1/2}$ allows to reduce to the commutators with the resolvents of $B^2$:
$$
(I+B^2)^{-1/2}  = \frac{2}{\pi} \int _0^{+\infty} (I+B^2+\lambda^2)^{-1} d\lambda.
$$
Notice  that here and since $B^2+I\geq I$, this integral is convergent in the uniform operator norm and that the integrand belongs to $\maK(\maM,\tau)$ for any $\lambda \geq 0$. The rest of the proof is then deduced using again the ideas of the proof of Lemma 1.2 in \cite{BaumDouglasTaylor}  since all involved bounded operators do belong to our von Neumann algebra $\maM$.

\end{proof}

\section{Some results on BVP on coverings}\label{BVP}

We review in this appendix some classical results on boundary value problems and explain how they extend to our semi-finite case. Most of the results were expanded in the seminal book \cite{BW} and some results were first obtained by Calderon and Seeley. See for instance \cite{Calderon, Seeley}. The second author would like to thank Paolo Antonini for many helpful discussions about this section.

Recall the double construction with the notations used in Section \ref{geometric}, so $(\thM, \thS, \thD)$ is an even geometric $\Gamma$-equivariant triple constructed using a Galois cover of the chain $(\hM, \hS, \hD)$ with boundary $(M, S, D)$. The boundary of  $(\thM, \thS, \thD)$ is the $\Gamma$-equivariant triple $(\tM, \tS,  \tD)$ which covers the triple $(M,S,D)$. Notice that we have assumed (which is always possible, see \cite{BW} pp. 52-53) that all structures are of product type near the boundary. Let $N$ be the double manifold $\hM\amalg_M (-\hM)$ and let $\tN$ be the double $\Gamma$-manifold  $\thM\amalg_{\tM} (-\thM)$ obtained similarly. The Clifford bundles $\hS^\pm$ yield a Clifford bundle $S_N^\pm$ over the even dimension closed manifold $N$ and a $\Gamma$-equivariant Clifford bundle $S_{\tN}^\pm$ over $\tN$. More precisely, we can glue the two manifolds over a collar neighborhood of $M$ in $N$ and we have
$$
S_N^+ = \hS^+ \amalg_G \hS^- \text{ and }  S_N^- = \hS^- \amalg_{G^{-1}} \hS^+,
$$
and similarly for $S_{\tN}^\pm$ over $\tN$.

The  Dirac operators $\hD^\pm$ and $\thD^\pm$ extend into the double operator $D_N$ over $N$ and a $\Gamma$-invariant double operator $D_{\tN}$ over $\tN$.   A section $s_N^+$ of $S_N^+$ is a couple $s_N^+=(s_+, s_-)$ with $s_+$ a section of $\hS^+$ over $\hM$ and $s_-$ a section of $\hS^-$ over $-\hM$ and such that $s_-= G s_+$ in a collar neighborhood. It is then classical  to check that the operator $D_N^+$ defined by $D_N^+ s_N^+:=(\hD^+ s_+, \hD^-s_-)$ is  well defined and is an elliptic first order operator on $N$. The similar construction gives the operator $D_N^-$.  We get similarly the $\Gamma$-invariant operators $D_{\tN}^\pm$ over $\tN$. The operators $D_N$ and $D_{\tN}$ are easily shown to be generalized Dirac operators for the natural induced connections on the Clifford bundles $S_N$ and $S_{\tN}$, see \cite{BW}. The following statement for the operator $D_{\tN}$ is a generalization of the classical statement for the operator $D_N$ about invertibility of the double, see \cite{BW} and \cite{XieYu}.

\begin{proposition}\label{L2invertible}
The operator $D_{\tN}^2$ is $L^2$-bounded below. More precisely, there exists a constant $\alpha>0$ such that in $L^2$-norms:
$$
 \| D_{\tN} {\tilde s} \| \geq \alpha \| {\tilde s}\| \quad \text{ for any }  {\tilde s}\in C_c^\infty (\tN, S_{\tN}),
$$
The extended operator $D_{\tN}$ is $L^2$ invertible (with {\underline{bounded}} inverse) and the operator $D_{\tN}^{-1}$ is then a $\Gamma$-invariant  pseudodifferential operator of order $-1$ on $\tN$ \footnote{In this paper, the pseudo differential calculus is always the uniformly supported calculus, see for instance \cite{NWX}}.
\end{proposition}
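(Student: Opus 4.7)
The argument has four stages: essential self-adjointness of $D_{\tN}$, construction of a $\Gamma$-equivariant anti-commuting involution $\Sigma$, upgrade of the trivial $L^2$-kernel to a genuine spectral gap via descent to the Mishchenko calculus over the closed double $N$, and finally the standard parametrix argument identifying $D_{\tN}^{-1}$ as an order $-1$ uniformly supported $\Gamma$-invariant pseudo-differential operator.

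\emph{Stage 1 (self-adjointness).} The manifold $\tN$ is complete because it covers the closed manifold $N$. A generalized Dirac operator on a complete Riemannian manifold is essentially self-adjoint on $C_c^\infty(\tN, S_{\tN})$ by the classical Chernoff/Wolf theorem, so $D_{\tN}$ extends uniquely to a densely defined self-adjoint operator that we denote again by $D_{\tN}$.

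\emph{Stage 2 (involution).} The swap of the two copies of $\thM$ defines a $\Gamma$-equivariant isometric involution $\sigma: \tN \to \tN$ fixing $\tM$ pointwise. Clifford multiplication by the inward unit conormal on $\tM$, which is the bundle isomorphism $G$, extends to a unitary, $\Gamma$-equivariant bundle involution $\Sigma: S_{\tN} \to S_{\tN}$ covering $\sigma$. Using the product form $\thD = G(\partial_u + \tD)$ in a collar of $\tM$, a direct computation shows $\Sigma D_{\tN} + D_{\tN} \Sigma = 0$ on $C_c^\infty(\tN, S_{\tN})$. In particular $D_{\tN}^2$ commutes with $\Sigma$ and preserves the $\pm 1$ eigenspaces.

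\emph{Stage 3 (quantitative invertibility).} I would first prove that the $L^2$-kernel of $D_{\tN}$ is trivial: if $D_{\tN}\tilde s=0$, elliptic regularity gives $\tilde s\in C^\infty$; decomposing $\tilde s$ under $\Sigma$, each component restricted to $\thM$ is a smooth $L^2$ solution of $\thD \tilde s|_{\thM}=0$ whose restriction to $\tM$ is forced, by $\Sigma$-(anti)invariance, to lie in the image or kernel of $G\mp I$. Extending such a section by reflection produces a smooth solution of the Dirac equation on $\tN$ vanishing on the open collar of $\tM$, and the unique continuation property for generalized Dirac operators, which remains valid on Galois coverings since the underlying Carleman estimate is local (compare the extension of \cite{BW}, Theorem~8.2 used in Lemma~\ref{denserange} and in \cite{Antonini, Ramachandran}), forces $\tilde s=0$. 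To upgrade triviality of the kernel to a quantitative bound $\|D_{\tN}\tilde s\|\geq \alpha\|\tilde s\|$, I would descend to the Mishchenko calculus over the closed manifold $N$: the operator $D_N$ yields a regular self-adjoint unbounded operator $\maD_N$ on the Hilbert $C^*\Gamma$-module $\maE_{S_N,\Gamma}$. Since $N$ is closed, $\maD_N$ is $C^*\Gamma$-Fredholm, hence has closed range; the involution argument above carried out in the Hilbert module setting (using the functional calculus for regular operators, cf. \cite{Lance}) and unique continuation force $\ker(\maD_N)=\mathrm{coker}(\maD_N)=0$, so $\maD_N$ is boundedly invertible. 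Applying the regular representation $\pi_{\reg}$ and the identification of Proposition~\ref{HMisoreg} recovers $D_{\tN}$, and a bound $\maD_N^2\geq \alpha^2$ transfers to $D_{\tN}^2\geq \alpha^2$, giving both the lower bound on $C_c^\infty(\tN, S_{\tN})$ and $L^2$-invertibility with bounded inverse.

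\emph{Stage 4 (pseudo-differential inverse).} Since $D_{\tN}$ is a $\Gamma$-invariant elliptic first order differential operator, the uniformly supported calculus of \cite{NWX} produces a $\Gamma$-invariant parametrix $Q\in \Psi^{-1}$ with $D_{\tN}Q=I+R$ and $QD_{\tN}=I+R'$, the remainders $R, R'$ being $\Gamma$-invariant uniformly supported smoothing operators. Then $D_{\tN}^{-1}=Q-D_{\tN}^{-1}R$, and since $D_{\tN}^{-1}$ is bounded and $R$ is smoothing and uniformly supported, the correction $D_{\tN}^{-1}R$ is a $\Gamma$-invariant uniformly supported smoothing operator. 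Hence $D_{\tN}^{-1}$ is itself a $\Gamma$-invariant pseudo-differential operator of order $-1$ in the uniformly supported calculus.

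\textbf{Main obstacle.} The delicate point is Stage 3: upgrading a trivial $L^2$-kernel on the non-compact cover $\tN$ into a uniform spectral gap. A direct Poincar\'e-type argument on $\tN$ is not available because $\tN$ is non-compact and the $\Gamma$-action is free, so one cannot invoke compactness of the resolvent in $B(L^2(\tN,S_{\tN}))$. The proposed route through the Hilbert module calculus sidesteps this by working over the closed double $N$, where $C^*\Gamma$-Fredholmness is automatic; the price to pay is justifying that triviality of kernel and cokernel of a regular self-adjoint $C^*\Gamma$-Fredholm operator implies bounded invertibility, and that invertibility in the Hilbert module descends through $\pi_{\reg}$ to the desired quantitative gap for $D_{\tN}$---both standard but non-trivial ingredients that must be spelled out carefully, and where the recent results of \cite{XieYu} on invertibility of the double in the equivariant setting are expected to be useful.
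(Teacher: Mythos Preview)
Your proposal is correct in outline but follows a genuinely different route from the paper's main proof, one that the paper itself acknowledges in the Remark immediately following the statement. The paper argues directly on the cover: Green's formula for $\thD^\pm$ yields the boundary control $\|\tilde s|_{\tM}\|^2 \leq C_1\|D_{\tN}\tilde s\|\,\|\tilde s\|$; a $\Gamma$-equivariant Carleman estimate (Lemma~\ref{Carleman1}) then controls the $L^2$-norm in a $\Gamma$-stable collar $\tU$ by $\|D_{\tN}\tilde s\|$ and the boundary trace; and a propagation estimate from \cite{XieYu} (their Appendix~A) bounds $\|\tilde s\|$ by $\|\tilde s|_{\tU}\|$ and $\|D_{\tN}\tilde s\|$. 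Chaining these gives the quantitative lower bound $\|D_{\tN}\tilde s\|\geq \alpha\|\tilde s\|$ without ever leaving $L^2(\tN)$. Your Stage~2 involution $\Sigma$ plays no role in this argument.

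Your route---trivial kernel via the reflection involution and unique continuation, then spectral gap by descent to the regular operator $\maD_N$ on the Mishchenko Hilbert module over the closed double $N$ and transfer back through $\pi_{\reg}$---is essentially the alternative the paper attributes to \cite{XieYu}, Theorem~5.1, and \cite{Antonini}, Theorem~4.1. It is more conceptual and reuses the Hilbert-module machinery already set up in Section~\ref{Background}, but it outsources the hard step (Fredholmness plus trivial kernel/cokernel implies bounded invertibility for a regular operator on a Hilbert $C^*$-module) to those references. The paper chose the direct proof precisely because the Carleman estimate \eqref{Carleman} is reused later, in the unique continuation arguments of Section~\ref{geometric} (e.g.\ Lemma~\ref{denserange}). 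One minor wrinkle: your Stage~3 phrasing ``extending such a section by reflection'' is awkward, since the section already lives on all of $\tN$; the intended point is that $\Sigma$-eigensections in $\ker D_{\tN}$ are determined by their restriction to one half, and the induced boundary condition on $\tM$ together with unique continuation forces vanishing. Your Stage~4 matches the paper's one-line dismissal ``is then classical''; neither you nor the paper addresses the fact that $D_{\tN}^{-1}R$ is not literally uniformly supported (only rapidly decaying off the diagonal), which is a standard abuse in this calculus.
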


\begin{remark}
The first statement in Proposition \ref{L2invertible} can be deduced from Theorem 5.1 in \cite{XieYu}. Indeed, we may apply the composition construction for the two representations (regular and average) and  conclude. See also Theorem 4.1 in \cite{Antonini}.
\end{remark}

We shall though give  a direct proof below because some of its steps are used in Section \ref{geometric}, especially the $\Gamma$-equivariant Carleman estimate \ref{Carleman}. Our proof adapts the classical one  in \cite{BW} and relies on new techniques from \cite{XieYu}, it has the advantage of being immediately extendable to other geometric situations, especially  foliations.

\begin{proof}
 If we assume that ${\tilde s} = ({\tilde s}_1, {\tilde s}_2)$ is a smooth compactly supported section of $S_{\tN}^+$ (so ${\tilde s}_2=\tG{\tilde s}_1$ in the collar neighborhood) then applying the Green formula we obtain
$$
<\thD^+{\tilde s}_1, {\tilde s}_2> - <{\tilde s}_1, \thD^-{\tilde s}_2> = \|{\tilde s}_2\vert_{\tM} \|^2.
$$
But due to the relation between $\thD^-$ and $\thD^+$ in the collar neighborhood and the fact that $\tG$ is skew adjoint, and applying this Green formula to $\thD^-$, we deduce the existence of a constant $C_1>0$ such that
\begin{equation}\label{(1)}
\|{\tilde s} \vert_{\tM} \|^2 \leq C_1 \|D_{\tN}^+ {\tilde s}\| \; \|{\tilde s}\|, \text{ for any } {\tilde s}\in C_c^\infty ( \tN, S_{\tN}).
\end{equation}
On the other hand  the unique continuation property can be stated more precisely as Lemma \ref{Carleman1} below. We deduce that in a ($\Gamma$-stable) collar neighborhood $\tU$ of $\tM$ in the double $\tN$, there exists $C_2>0$ such that
\begin{equation}\label{(2)}
\|{\tilde s} \vert_{\tU} \|^2 \leq C_2 \left( \|(D_{\tN} {\tilde s})\vert_{\tU}\|^2 +  \|{\tilde s} \vert_{\tM} \|^2\right).
\end{equation}
Notice that such $\tU$ is simply constructed as the inverse image of a collar neighborhood $U$ of $M$ in the double $N$. Combining the estimates \ref{(1)} and \ref{(2)}, we deduce the existence of $C_3>0$ such that
\begin{equation}\label{(4)}
\|{\tilde s} \vert_{\tU} \|^2 \leq C_3 \left( \|D_{\tN} {\tilde s}\|^2 + \|D_{\tN} {\tilde s}\|\cdot \|{\tilde s} \|\right).
\end{equation}
Now, exactly as in  Appendix A of \cite{XieYu}, see also \cite{Ramachandran}, one proves the existence, for any such $\Gamma$-stable open set $\tU$, of constants $C_4, C_5>0$ such that
\begin{equation}\label{(3)}
\|{\tilde s} \| \leq C_4 \|{\tilde s} \vert_{\tU} \| + C_5  \|D_{\tN} {\tilde s}\|.
\end{equation}
Combining the estimates \ref{(3)} and  \ref{(4)} we get the allowed estimate.

That $D^{-1}_{\tN}$ is a $\Gamma$-invariant  pseudo differential operator of order $-1$ is then classical.
\end{proof}

Adapting the proof of Lemma 8.6 in \cite{BW}, we have   (keeping the above notations and recalling that the Dirac operator $D_{\tN}$ is $\Gamma$-invariant):

\begin{lemma}\label{Carleman1}\
There exist constants $C_4, C_5 >0$ such that:
$$
\|{\tilde s} \| \leq C_4 \|{\tilde s} \vert_{\tU} \| + C_5  \|D_{\tN} {\tilde s}\|.
$$
\end{lemma}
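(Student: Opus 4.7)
The plan is to adapt the closed-manifold argument of \cite{BW}[Lemma 8.6] to the Galois cover setting, replacing finite-dimensional compactness of low-eigenspaces by $\tau$-finiteness on the base. Fix $\varepsilon>0$ small and let $P_\varepsilon := \chi_{[-\varepsilon,\varepsilon]}(D_{\tN})$ be the $\Gamma$-invariant spectral projection. Functional calculus gives at once the high-energy estimate $\|(I-P_\varepsilon) s\| \leq \varepsilon^{-1}\|D_{\tN} s\|$. Writing $s = P_\varepsilon s + (I-P_\varepsilon)s$ and estimating $\|(P_\varepsilon s)|_{\tU}\|$ by $\|s|_{\tU}\| + \|(I-P_\varepsilon)s\|$, the lemma reduces to the complementary low-energy bound
\[
\|P_\varepsilon s\| \;\leq\; c^{-1}\,\|(P_\varepsilon s)|_{\tU}\| \qquad \text{for some } c=c(\varepsilon,\tU)>0,
\]
uniform in $s$; the constants $C_4$ and $C_5$ of the lemma then follow by combining the two bounds.

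To prove this low-energy bound I would transfer the problem from the cover $\tN$ to the closed base $N$ through the Mishchenko-Fomenko bundle $\Xi_\Gamma\to N$, working with the Hilbert $C^*\Gamma$-module $\maE_{S_N,\Gamma}$ and the representations $\pi_{\reg}$, $\pi_{\av}$ recalled in Section \ref{Background}. The $\Gamma$-invariance of $D_{\tN}$ exhibits it as coming from a regular self-adjoint operator $\hat{\maD}$ on $\maE_{S_N,\Gamma}$ which, since $N$ is closed, has $C^*\Gamma$-compact resolvent. Consequently the module spectral projection $\hat P_\varepsilon$ corresponding to $P_\varepsilon$ is $C^*\Gamma$-compact and plays the role of the finite-dimensional low-eigenspace projection in the Booss-Wojciechowski argument. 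The $\Gamma$-stability of $\tU$ ensures that $\tU = \pi^{-1}(U)$ for an honest open set $U\subset N$, so the cutoff $M_{\chi_{\tU}}$ descends to multiplication by $\chi_U$ at the module level. With these ingredients the contradiction argument of \cite{BW}[Lemma 8.6] can be run on $N$: if no constant $c$ existed, a $C^*\Gamma$-compactness extraction would produce a nonzero $s_\infty$ in the range of $\hat P_\varepsilon$ with $s_\infty$ vanishing on $U$; diagonalizing the bounded self-adjoint compression $\hat P_\varepsilon \hat{\maD} \hat P_\varepsilon$, one may further arrange that $s_\infty$ is an eigensection of $\hat{\maD}$; the $\Gamma$-equivariant unique continuation property, already invoked in the proof of Lemma \ref{denserange} via \cite{BW}[Theorem 8.2] and its covering extension in \cite{Antonini, Ramachandran}, then forces $s_\infty \equiv 0$, a contradiction. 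Composing with $\pi_{\reg}$ and pairing with $\delta_e \in \ell^2\Gamma$ finally transfers the module estimate back to the required $L^2(\tN)$ inequality.

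The main obstacle is precisely this compactness step: on the non-compact cover $\tN$ no Rellich theorem is available and a naive weakly convergent sequence can run off to infinity along a $\Gamma$-orbit, so one must exploit the $\Gamma$-invariance to descend to the closed base, where $C^*\Gamma$-module compactness is genuine. A subsidiary technical point is that $\hat P_\varepsilon$ is not a literal eigenspace projection (the spectrum of $\hat{\maD}$ is in general continuous), which is why the compression $\hat P_\varepsilon \hat{\maD} \hat P_\varepsilon$ must be introduced to extract an eigendirection before applying unique continuation. Once these two are handled, the remaining ingredients---the reduction to the low-energy estimate, the local Carleman property on $\tN$, and the final transfer back from modules to $L^2(\tN)$---are all routine.
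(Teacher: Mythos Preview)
There is a genuine gap at the heart of your argument: the ``$C^*\Gamma$-compactness extraction'' step does not work. First, the projection $\hat P_\varepsilon = \chi_{[-\varepsilon,\varepsilon]}(\hat\maD)$ requires \emph{Borel} functional calculus, which is unavailable for regular operators on Hilbert $C^*$-modules unless $\pm\varepsilon$ are gap points of the spectrum---and there is no reason to expect gaps here (indeed the surrounding Proposition~\ref{L2invertible} is precisely what would give a gap at $0$, so you cannot assume it). Second, even if $\hat P_\varepsilon$ existed as a $C^*\Gamma$-compact projection, the unit ball of its range is not sequentially compact in any topology that would let you extract a nonzero limit $s_\infty$; Hilbert-module compactness is a size condition, not a Rellich-type property. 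Passing instead to the von Neumann algebra $\maM_N$ gives a well-defined $P_\varepsilon$ with $\tau(P_\varepsilon)<\infty$, but its range is still an \emph{infinite}-dimensional Hilbert space, so the finite-dimensional contradiction of \cite{BW}, Lemma 8.6, does not transplant. Third, your fix for the continuous spectrum---``diagonalizing'' $\hat P_\varepsilon\hat\maD\hat P_\varepsilon$ to produce an eigensection---fails for the same reason: self-adjoint operators on Hilbert modules, or on $\tau$-finite subspaces of a II$_\infty$ algebra, need not have any eigenvectors at all, so there is nothing to feed into unique continuation.

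The paper avoids all of this by never decomposing spectrally. It uses the $\Gamma$-equivariant Carleman inequality \emph{quantitatively}: the local weighted estimate lets one propagate control from $\tU$ to its metric $\lambda$-neighbourhood $\tU^\lambda$, obtaining $\|\tilde s\vert_{\tU^\lambda}\|\leq C_4^\lambda\|\tilde s\vert_{\tU}\|+C_5^\lambda\|D_{\tN}\tilde s\|$, and then iterates this enlargement finitely many times, using only that the base $N$ is compact so that a finite chain $\tU\subset\tU^{\lambda}\subset\cdots$ exhausts $\tN$. This is a direct estimate, not a contradiction argument, and requires neither module compactness nor eigensections.
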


\begin{proof}
We first use the Carleman estimate. In our covering situation, this estimate can be stated as follows. Let ${\tilde x}_0$ be an element of the boundary $\pa\tU$ of $\tU$ and denote by $x_0$ its projection in $U\subset N$. Choose  $r_0>0$ such that  the ball $B=B(x_0, r_0)$ is contained in $U$ and its $\Gamma$-cover $\tB\subset \tU$ can be identified with $B\times \Gamma$. Then for any $\ep>0$, there exists $C_\ep>0$ such that for any $R>0$ we have:
\begin{multline}\label{Carleman}
R \sum_{\gamma\in \Gamma} \int_{u=0}^\ep \int_{y\in \S_u} e^{R(\ep-u)^2} \left< {\tilde\sigma } (u, \gamma; y) , {\tilde\sigma } (u, \gamma; y) \right> du dy \\  \leq C_\ep \sum_{\gamma\in \Gamma} \int_{u=0}^\ep \int_{y\in \S_u} e^{R(\ep-u)^2} \left< D_{\tN} {\tilde\sigma } (u, \gamma; y) , D_{\tN} {\tilde\sigma } (u, \gamma; y) \right> du dy,
\end{multline}
where $\S_u$ is the sphere of radius $r_0+u$. The proof given in \cite{BW} applies with minor changes since we integrate over the pull-back in the cover. We omit it for simplicity and leave it as an exercise. See also \cite{XieYu}.

Now, fix $\lambda >0$ and, as in \cite{XieYu}, denote by $\tU^\lambda$ the metric $\lambda$-neighborhood of $\tU$, then since the metric is $\Gamma$-invariant upstairs, the open set $\tU^\lambda$ is also $\Gamma$-stable. Hence we deduce that there exists constants $C^\lambda_4, C^\lambda_5 >0$ such that:
$$
\|{\tilde s}\vert_{\tU^\lambda}  \| \leq C^\lambda_4 \|{\tilde s} \vert_{\tU} \| + C^\lambda_5  \|D_{\tN} {\tilde s}\|.
$$
Since $N$ is compact, the proof is complete since we may repeat the construction of $\Gamma$-stable neighborhoods $\tU\to \tU^\lambda$ a finite number of times and cover the whole $\Gamma$-manifold $\tN$.
\end{proof}

Recall that ${\tilde b}^0_\pm$ is the trace map restricted to $\Ker(\thD^\pm_{\max})$ and which is valued in $W^{-1/2}(\tM,\thS^\pm\vert_{\tM})$, see Lemma \ref{W-1/2} below. Recall also that the range of ${\tilde b}^0_\pm$ is the so-called  Cauchy data space  $H^{-1/2}(\tD^\pm)$.

\begin{corollary}
The space $H^{-1/2}(\tD^+)\cap H^{-1/2}(\tD^-)$ is trivial.
\end{corollary}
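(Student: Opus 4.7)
The strategy is a direct adaptation of the classical Booß--Wojciechowski argument, now powered by the $L^{2}$-invertibility of the double Dirac operator on the $\Gamma$-cover established in Proposition \ref{L2invertible}. The plan is to glue two kernel sections with matching Cauchy data into a global $L^{2}$ section of $S_{\tN}$ that lies in the kernel of $D_{\tN}$, and then to invoke invertibility to conclude that this section must vanish.

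More precisely, I would start with $\varphi \in H^{-1/2}(\tD^{+})\cap H^{-1/2}(\tD^{-})$. By definition of the Cauchy data spaces there exist $u_{+}\in \Ker(\thD^{+}_{\max})$ and $u_{-}\in \Ker(\thD^{-}_{\max})$ with
\[
\tilde{b}^{0}_{+}(u_{+}) \;=\; \varphi \qquad\text{and}\qquad \tilde{b}^{0}_{-}(u_{-}) \;=\; \tG\,\varphi,
\]
where $\tG$ is the $\Gamma$-invariant Clifford-normal bundle isomorphism used to define $S_{\tN}^{\pm}$. Pasting $u_{+}$ on $\thM$ with $u_{-}$ on $-\thM$ across the common boundary $\tM$ then produces a section $u$ of $S_{\tN}^{+}$ which, because each summand lies in $L^{2}$, belongs to $L^{2}(\tN,S_{\tN}^{+})$.

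The central step is to show that $u\in \Dom(D_{\tN,\max})$ and that $D_{\tN}u = 0$ as a distribution on the closed $\Gamma$-manifold $\tN$. Off $\tM$ this is immediate since $\thD^{\pm}u_{\pm}=0$. Along $\tM$, testing against a smooth compactly supported section and using the product form $\thD^{+}=\tG(\partial_{u}+\tD)$ in a collar $\tU$, Green's formula at the $W^{-1/2}$ trace level (extended to the Galois covering in Appendix \ref{BVP}) expresses the distributional jump of $D_{\tN}u$ across $\tM$ as a pairing driven by the quantity $\tilde{b}^{0}_{-}(u_{-}) - \tG\,\tilde{b}^{0}_{+}(u_{+})$; the matching of Cauchy data we assumed makes this vanish. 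This $W^{-1/2}$ gluing is the main technical obstacle: the traces are only distributional, so the Green pairing must be interpreted in the weak sense validated by the trace theorems and Green's formula developed $\Gamma$-equivariantly in Appendix \ref{BVP}.

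Once the identity $D_{\tN}u = 0$ with $u\in L^{2}(\tN,S_{\tN}^{+})$ is established, Proposition \ref{L2invertible} applies and forces $u=0$. Hence $u_{+}=0$ and $u_{-}=0$, and applying $\tilde{b}^{0}_{+}$ yields $\varphi = 0$, as required. The only nontrivial input beyond the already proved $L^{2}$-invertibility is the low-regularity jump/Green calculus on the Galois cover, and that is precisely what the semi-finite BVP results assembled in Appendix \ref{BVP} provide.
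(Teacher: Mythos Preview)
Your proposal is correct and follows essentially the same line as the paper, which simply refers to the Boo{\ss}--Wojciechowski argument and notes that the two ingredients carrying over to the $\Gamma$-covering are elliptic regularity and the injectivity of the double operator (Proposition~\ref{L2invertible}). You have spelled out the gluing of the two kernel sections and the vanishing of the distributional jump via the $W^{-1/2}$ Green formula more explicitly than the paper does, but the strategy is the same.
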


\begin{proof}
The proof in \cite{BW} pp 77-78 extends immediately to our situation of $\Gamma$-covering. We only notice that  the elliptic regularity argument works on the covering manifold and that the double operator is injective by \ref{L2invertible}.
\end{proof}

As a corollary, we can state:

\begin{proposition}\label{Calderon}\
There exist  $0$-th order $\Gamma$-invariant pseudodifferential idempotents  $\tC^\pm=C(\thD^\pm)$ such that if we denote by  $\tC^\pm_s$ their bounded extensions to the Sobolev spaces $W^s(\tM,\thS^\pm\vert_{\tM})$ then
\begin{enumerate}
\item The range of  $\tC_{-1/2}^\pm: W^{-1/2}(\tM,\thS^\pm\vert_{\tM})\rightarrow W^{-1/2}(\tM,\thS^\pm\vert_{\tM})$  is  precisely $H^{-1/2}(\tD^\pm)$.
\item
$
\tC_{-1/2}^++\tG^{-1}\tC_{-1/2}^-\tG=id_{W^{-1/2}(\tM, \thS^+\vert_{\tM})}.
$
\end{enumerate}
\end{proposition}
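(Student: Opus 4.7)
The plan is to adapt the classical Seeley--Calderon construction from \cite{BW}[Ch.~12] to the $\Gamma$-equivariant setting on the cover, using as the main input Proposition \ref{L2invertible}, which furnishes an $L^2$-invertible $\Gamma$-invariant double $D_{\tN}$ whose inverse $D_{\tN}^{-1}$ is a $\Gamma$-invariant pseudodifferential operator of order $-1$ in the uniformly supported calculus. Because $D_{\tN}^{-1}$ is $\Gamma$-invariant and of order $-1$, all symbolic estimates from the compact case transfer verbatim; only the functional-analytic aspects (boundary traces, range identification, unique continuation) need to be lifted to the covering.

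First, I would construct the $\Gamma$-equivariant Poisson operators
\[
\tilde K^\pm \; : \; W^{-1/2}(\tM,\thS^\pm\!\vert_{\tM}) \; \longrightarrow \; \{u \in L^2_{\loc}(\thM,\thS^\pm) \;:\; \thD^\pm u = 0\}
\]
by the formula $\tilde K^\pm(f) := r^+ \circ D_{\tN}^{-1}(\tG^{\mp}(f \otimes \delta_{\tM}))$, where $r^+$ is restriction to $\thM$, $\delta_{\tM}$ is the distributional boundary layer and $\tG^{\mp}$ inserts the Clifford jump required by the product structure in the collar. $\Gamma$-equivariance is automatic from that of $D_{\tN}^{-1}$ and the collar data. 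Then I would define
\[
\tC^\pm \; := \; {\tilde b}_\pm^{\,0} \circ \tilde K^\pm,
\]
and show by the standard Poisson-operator/trace composition (together with the $\Gamma$-equivariant trace theorems recalled from \cite{Ramachandran} and the preceding paragraphs) that $\tC^\pm$ extends to a bounded operator on every Sobolev scale $W^s(\tM,\thS^\pm\!\vert_{\tM})$, and is in fact a $\Gamma$-invariant classical pseudodifferential operator of order $0$. Its principal symbol is computed exactly as in \cite{BW}, since the principal symbol depends only on local data at the boundary, which is identical downstairs and upstairs.

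Next I would verify idempotency and the range identification in (1). For $f \in W^{-1/2}(\tM,\thS^+\!\vert_{\tM})$, the section $\tilde K^+ f$ lies in $\Ker \thD^+_{\max}$, so ${\tilde b}_+^{\,0}\tilde K^+ f \in H^{-1/2}(\tD^+)$, giving $\Range(\tC^+_{-1/2})\subseteq H^{-1/2}(\tD^+)$. Conversely, if $f = {\tilde b}_+^{\,0}\, u$ for some $u \in \Ker \thD^+_{\max}$, then extending $u$ by zero to $\tN$ produces a distribution whose image under $D_{\tN}$ is the boundary layer carrying $f$ (with the appropriate Clifford jump), and applying $D_{\tN}^{-1}$ recovers $u$ on $\thM$; this forces $\tC^+ f = f$, so $\tC^+$ is a projection with range exactly $H^{-1/2}(\tD^+)$. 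The $\Gamma$-equivariant unique-continuation property, phrased as the Carleman estimate \eqref{Carleman} and Lemma \ref{Carleman1}, is what guarantees that the Cauchy data space is detected exactly.

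Finally, for the complementarity identity (2), the key observation is a ``jump formula'' for $D_{\tN}^{-1}$ across $\tM$: given $f$ on $\tM$, consider $D_{\tN}^{-1}(\tG (f \otimes \delta_{\tM}))$ on the whole double $\tN$ and take boundary traces from each side; the two traces are $\tC^+_{-1/2}f$ and $\tG^{-1}\tC^-_{-1/2}\tG f$ respectively, while their difference is pinned down by the distributional commutator $[D_{\tN}, r^+]$ (applied to $D_{\tN}^{-1}$ of the boundary layer), which collapses to the identity on $f$. This is exactly the $\Gamma$-equivariant version of the classical splitting $\tC^+ + \tG^{-1}\tC^-\tG = I$. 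The main obstacle will be making the distributional jump formula rigorous in the semi-finite, non-compact covering setting; this is where one must carefully combine the uniformly-supported pseudodifferential calculus (to control $D_{\tN}^{-1}$ near the boundary), the $\Gamma$-equivariant mapping properties of boundary traces on the Sobolev scale, and the $L^2$-invertibility from Proposition \ref{L2invertible}, so that the formally classical identity passes to boundedness on $W^{-1/2}(\tM,\thS^\pm\!\vert_{\tM})$.
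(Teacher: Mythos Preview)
Your proposal is correct and is precisely the direct adaptation of \cite{BW}, Theorem~12.4 to the $\Gamma$-invariant pseudodifferential calculus that the paper explicitly says can be carried out. The paper, however, does not spell this out: instead it invokes the Hilbert-module Calderon projector constructed by Antonini in \cite{Antonini} (Theorem~4.5) over the maximal $C^*$-algebra, and then specializes to the $\Gamma$-equivariant $L^2$ picture via the composition construction associated with the regular representation $\pi_{\reg}$.

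The two routes agree at the symbolic level (principal symbols are local, hence identical upstairs and downstairs) and yield the same operators $\tC^\pm$. Your approach is more self-contained: it relies only on Proposition~\ref{L2invertible}, the $\Gamma$-equivariant trace theorems already recalled from \cite{Ramachandran}, and the Carleman-type unique continuation, and it makes explicit the Poisson operator and jump-formula mechanism behind items~(1) and~(2). The paper's route is shorter because it outsources the analytic work to the $C^*$-module setting of \cite{Antonini}, where the Calderon projector is built once for the regular operator on the Mishchenko bundle; applying $\pi_{\reg}$ then automatically delivers the $\Gamma$-invariant pseudodifferential idempotents together with the range identification and the complementarity identity, with no separate distributional jump argument needed on the cover.
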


\begin{proof}
The proof given in \cite{BW} of Theorem 12.4 adapts to the  $\Gamma$-invariant pseudodifferential calculus. Here, for simplicity,  we just apply Theorem 4.5 of \cite{Antonini} again together with the composition construction associated with the regular representation of the group $\Gamma$.
\end{proof}

\begin{remark}
The statement is more precise and one defines explicitly the operators $\tC^\pm$ exactly as in Theorem 12.4 of \cite{BW}. We  implicitly use these constructed operators in this paper.   The idempotents $\tC^\pm= C (\thD^\pm)$ are  called  Calderon projectors associated with our Dirac operators $\thD^\pm$.
\end{remark}

\begin{definition} Given a $\Gamma$-equivariant vector bundle $\tV$ over $\tM$, a $\Gamma$-invariant $0$-th order pseudo differential (idempotent) $R:C^\infty(\tM, \tS^+) \to C^\infty (\tM, \tV)$ will be called  a $\Gamma$-invariant Boundary Value Condition for the $\Gamma$-invariant Dirac operator $\thD^+$ if the principal symbol $r$ of $R$ satisfies the pointwise condition
$$
\Im (r)= \Im (r c^+) \simeq \Im (c^+) \text{ so that } r: \Im (c^+) \rightarrow \Im (r) \text{ is an isomorphism.}
$$
\end{definition}

We fix such $\Gamma$-invariant Boundary Value Condition $R$ for the operator $\thD^+$ and consider the realization $\thD^+_R$ of $\thD^+$ defined as the restriction of $\thD^+_{\max}$ to the domain  associated with $R$:
$$
\Dom (\thD^+_R) := \{ u \in \Dom (\thD^+_{\max}), \tb^+ (u) \in \Ker (R)\}.
$$
Consider on the other hand the following composition of maps that we denote by $\tF_R^+: \Ker \thD^+_{\max} \rightarrow \Range (R)$:
$$
\Ker \thD^+_{\max} \xrightarrow{{\tilde b}^0_+} W^{-1/2}(\tM,\hS^+_{|_{\tM}})\xrightarrow{\Psi_{-1/2,0}}L^2(\tM,\hS^+_{|_{\tM}})\xrightarrow{R} \Range (R)
$$

\begin{proposition}\label{KerCoker}
We have
$$
\Ker \tF_R^+ = \Ker \thD^+_{R}\text{ and } \Coker \tF_R^+ = \Coker \thD^+_{R}.
$$
In particular, the $\tau$ index of the realization operator $\thD^+_{R}$ coincides with the $\tau$-index of $\tF_R^+$.
\end{proposition}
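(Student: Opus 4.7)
The strategy is to deduce both identifications from a single snake-lemma argument and to reduce the cokernel statement to two surjectivity claims. The kernel identification is immediate from unwinding the definitions: an element $u$ lies in $\Ker\thD^+_R$ iff $u\in\Ker\thD^+_{\max}$ and $\tb_+u$ is annihilated by $R$ in $W^{-1/2}(\tM,\thS^+|_{\tM})$, which via the isomorphism $\Psi_{-1/2,0}$ is precisely the condition $\tF_R^+u=0$, giving $\Ker\thD^+_R=\Ker\tF_R^+$.

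For the cokernel, set $\sigma:=R\circ\Psi_{-1/2,0}\circ\tb_+$ and consider the commutative diagram
\[
\begin{CD}
0 @>>> \Dom(\thD^+_R) @>>> \Dom(\thD^+_{\max}) @>\sigma>> \Range(R) @>>> 0 \\
@. @VV{\thD^+_R}V @VV{\thD^+_{\max}}V @VV{0}V @. \\
0 @>>> L^2(\thM,\thS^-) @= L^2(\thM,\thS^-) @>>> 0 @>>> 0
\end{CD}
\]
whose snake-lemma connecting sequence reads
\[
0\to\Ker\thD^+_R\to\Ker\thD^+_{\max}\xrightarrow{\tF_R^+}\Range(R)\to\Coker\thD^+_R\to\Coker\thD^+_{\max}\to 0.
\]
Two surjectivity facts are then needed: that of $\thD^+_{\max}$ onto $L^2(\thM,\thS^-)$ (killing $\Coker\thD^+_{\max}$) and that of $\sigma$ onto $\Range(R)$ (for exactness of the top row). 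The first I would obtain from the $L^2$-invertibility of the double, Proposition \ref{L2invertible}: given $v\in L^2(\thM,\thS^-)$, extend by zero to $\tilde v\in L^2(\tN,S_{\tN}^-)$, apply $D_{\tN}^{-1}$ to get a section in $W^1(\tN,S_{\tN}^+)$, and restrict to $\thM$ to produce $u\in W^1(\thM,\thS^+)\subset\Dom(\thD^+_{\max})$ with $\thD^+_{\max}u=v$.

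The harder surjectivity reduces, since $R$ is $0$-th order and $\Psi_{-1/2,0}$ is an isomorphism, to surjectivity of $\tb_+:\Dom(\thD^+_{\max})\to W^{-1/2}(\tM,\thS^+|_{\tM})$. I would decompose any $\phi\in W^{-1/2}$ via Proposition \ref{Calderon}(2) as $\phi=\tC^+_{-1/2}\phi+\tG^{-1}\tC^-_{-1/2}\tG\phi$: the first summand lies in $H^{-1/2}(\tD^+)=\Range(\tC^+_{-1/2})$ and so equals $\tb^0_+(u_1)$ for a unique $u_1\in\Ker\thD^+_{\max}$ via the Poisson inverse of $\tb^0_+$; the complementary summand is realized as $\tb_+u_2$ for some $u_2\in\Dom(\thD^+_{\max})$ by a collar construction using the functional-calculus ``Poisson kernel'' $e^{-r|\tD|}$ multiplied by a cutoff in the normal coordinate $r$, the whole construction being $\Gamma$-equivariant thanks to the equivariance of the Calderon projectors and of $\tD$. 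The main obstacle will be making this last step rigorous at the $W^{-1/2}$ regularity, requiring the trace theorem for Galois covers stated earlier in this appendix and careful control of the $\tau$-compact remainders produced by the cutoff; once this is in hand the snake lemma delivers both identifications at once and the $\tau$-index equality follows automatically, the $\tau$-Fredholmness being guaranteed by the $\tau$-compactness of the Calderon-type resolvents from Lemma \ref{Lemma9}.
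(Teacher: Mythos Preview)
Your kernel argument matches the paper's. For the cokernel, however, your route is genuinely different from the one taken in the paper, and it carries a real obstacle that the paper's approach sidesteps.

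The paper does not use a snake lemma. Instead it invokes the classical adjoint-BVP identification $\Coker\thD^+_R\simeq\Ker\thD^-_{\tG(I-I_{R^*})\tG^{-1}}$ (citing \cite{BW}), and then rewrites this kernel through a chain of $\Gamma$-equivariant isomorphisms built from the Calderon relation $\tC^+ + \tG^{-1}\tC^-\tG = I$ of Proposition~\ref{Calderon}, arriving at $\Ker(\tF_R^+)^*$. No surjectivity of the trace map onto $W^{-1/2}$ is ever needed: only the range identification $\Range(\tC^\pm_{-1/2})=H^{-1/2}(\tD^\pm)$ and the symbolic relation between $R$, $I_{R^*}$ and $\tC^+$.

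Your snake-lemma approach is conceptually clean, and your argument for surjectivity of $\thD^+_{\max}$ via the invertible double (Proposition~\ref{L2invertible}) is correct and in fact stronger than what the paper proves in Lemma~\ref{denserange}. The gap is exactly where you flag it: the exactness of your top row hinges on $\tb_+:\Dom(\thD^+_{\max})\to W^{-1/2}(\tM)$ being surjective, and your sketched collar construction with $e^{-r|\tD|}$ does not close. A direct computation shows that for generic $\psi\in W^{-1/2}$ the section $\chi(r)e^{-r(I+|\tD|)}\psi$ has $\thD^+$-image whose $L^2$-norm involves $\int(1+|\lambda|)\,d\mu_\psi(\lambda)$, which need not be finite; so such an extension lands in $\Dom(\thD^+_{\max})$ only for $\psi\in W^{1/2}$, not $W^{-1/2}$. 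The Calderon decomposition handles only the $\tC^+$-piece via the genuine Poisson operator; realizing the complementary $\tG^{-1}\tC^-\tG$-piece as a boundary trace from $\Dom(\thD^+_{\max})$ on $\thM$ (as opposed to $\Ker\thD^-_{\max}$, which lives in the wrong bundle) requires a separate argument you have not supplied. The paper's adjoint-BVP route is designed precisely to avoid this extension problem by trading a cokernel on $\thS^+$ for a kernel on $\thS^-$, where the Calderon machinery applies directly.
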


\begin{proof}
We have
$$
\Ker \tF_R^+ =\{ u \in \Ker (\thD^+_{\max}) \text{ such that } F_R^+u=0 \in L^2(\tM, \thS^+_{\tM})\},
$$
while
$$
\Ker \thD^+_{R} =\{ u' \in \Ker (\thD^+_{\max}) \text{ such that } R \Psi^+_{-1/2,0} \tb^0_+u'=0\in  L^2(\tM, \thS^+\vert_{\tM})\}
$$
from which the first claim follows.

 Let $I_{R^*}$  denote the orthogonal projection onto the range of $R^*$.  Then  it is easy to check that the cokernel of the operator $\thD^+_{R}$ is isomorphic to the kernel of the operator $\tilde{\hat{D}}^-_{\tilde{G}(I-I_{R^*})\tilde{G}^{-1}}$, see \cite{BW}.  So, we need to prove that
$\Ker (\tF_R^+)^*\cong  \Ker \tilde{\hat{D}}^-_{\tilde{G}(I-I_{R^*})\tilde{G}^{-1}}$.  We shall use the Calderon projections $\tC^\pm$ of  Proposition \ref{Calderon} above.

From the first item applied to the BVP $\tilde{G}(I-I_{R^*})\tilde{G}^{-1}$ and the operator $\thD^-$ now,  we can write
$$
\Ker \thD^-_{\tG (I-I_{R^*}) \tG^{-1}}  \simeq  \Ker (\tG (I-I_{R^*}) \tG^{-1} \Psi^-_{-1/2,0} \tC^-\Psi^-_{0,-1/2}).
$$
Consider the operator $\tilde{G}(I-I_{R^*})\tilde{G}^{-1}: L^2(\tM,\thS^-\vert_{\tM})\rightarrow L^2(\tM,\thS^-\vert_{\tM})$ then  we have
$$
\Dom(\tilde{\hat{D}}^-_{\tilde{G}(I-I_{R^*})\tilde{G}^{-1}}):= \{ u \in \Dom (\thD^-_{\max}) \text{ such that } \Psi_{-1/2,0}({\tilde b}^0_-u)\in \Ker(\tilde{G}(I-I_{R^*})\tilde{G}^{-1})\}.
$$
So, using again the properties stated in Proposition \ref{Calderon} we can write a list of $\Gamma$-equivariant isomorphisms, distinguishing the isomorphisms $\Psi^+_{s,t}$ and $\Psi^-_{s,t}$ corresponding to the Sobolev spaces associated with the bundles $\thS^+$ and $\thS^-$, respectively, which are conjugates of each other through $\tG$.

\begin{eqnarray*}
\Ker \thD^-_{\tG (I-I_{R^*}) \tG^{-1}} & \simeq & \Ker (\tG (I-I_{R^*}) \tG^{-1} \Psi^-_{-1/2,0} \tC^-\Psi^-_{0,-1/2})\\
&\simeq & \Ker(\tG (I-I_{R^*}) \tG^{-1} \Psi^-_{-1/2,0}\tG(I-\tC^+)\tG^{-1}\Psi^-_{0,-1/2} )\\
&\simeq & \Ker(\tG (I-I_{R^*}) \Psi^+_{-1/2,0}(I-\tC^+)\Psi^+_{0,-1/2} \tG^{-1})\\
&\simeq & \Ker((I-I_{R^*})\Psi^+_{-1/2,0}(I-\tC^+)\Psi^+_{0,-1/2}).
\end{eqnarray*}

But notice that for $u\in \Range \Psi_{-1/2, 0}^+(I-C^+)\Psi_{0, -1/2}^+$,
$$
u\in \Ker\left((I-I_{R^*})\Psi_{-1/2,0}(I-\tC^+)\Psi_{0,-1/2}\right) \Longleftrightarrow I_{R^*} u=u \text{ and } \Psi_{-1/2,0}^+\tC^+\Psi_{0,-1/2}^+ u=0.
$$

Set $\tilde{E}^+:= \Psi^+_{-1/2,0}\tilde{b}^0_+$. Note that $\Range{\tilde{E}^+}\simeq \Range{\Psi_{-1/2,0}^+\tC^+\Psi_{0,-1/2}^+}$.  Hence
\begin{eqnarray*}
\Ker \thD^-_{\tG (I-I_{R^*}) \tG^{-1}} & \simeq & \Ker \{\Psi_{-1/2,0}^+ \tC^+\Psi_{0,-1/2}^+ R^*: \Range(R)\rightarrow \Range(\Psi_{-1/2,0}\tC^+\Psi_{0,-1/2})\}\\
&\simeq& \Ker \left( (\tE^+)^*R^*: \Range(R)\rightarrow \Range((\tE^+)^*)\right)\\
&\simeq& \Ker (\tF_R^+)^*.
\end{eqnarray*}
Notice that the isomorphism
\begin{eqnarray*}
&\Ker \left( \{\Psi_{-1/2,0}\tC^+\Psi_{0,-1/2}R^*: \Range(R)\rightarrow \Range(\Psi_{-1/2,0}\tC^+\Psi_{0,-1/2})\}\right) & \simeq  \\
&\Ker\left( \{\Psi_{-1/2,0}(\tC^+)^*\Psi_{0,-1/2}R^*: \Range(R)\rightarrow \Range(\Psi_{-1/2,0}(\tC^+)^*\Psi_{0,-1/2})\}\right) &
\end{eqnarray*}
is due to the fact that  $(\tC^+)^*$ furnishes a $\Gamma$-equivariant isomorphism between $\Range(\Psi_{-1/2,0}\tC^+\Psi_{0,-1/2})$ and $\Range(\Psi_{-1/2,0}(\tC^+)^*\Psi_{0,-1/2})$.
\end{proof}

We end this appendix by pointing out an easy generalization of Theorem 13.8 in \cite{BW} to the $\Gamma$-equivariant setting.

\begin{lemma}\label{W-1/2}
Fix $t\in \R$ and assume that ${\tilde \sigma}\in W^t (\thM, \thS^+)$ satisfies that $\thD^+{\tilde \sigma}\in W^s(\thM, \thS^-)$ for some $s> -1/2$. Then the trace of ${\tilde \sigma}$ at the boundary $\tM$ belongs to $W^{t-1/2} (\tM, \thS^+\vert_{\tM})$.
\end{lemma}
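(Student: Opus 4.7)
The strategy is to run the classical proof of [BW, Theorem 13.8] entirely inside the $\Gamma$-invariant (uniformly supported) pseudodifferential calculus on $\thM$ and on its double $\tN$, relying on the $L^2$-invertibility of $D_{\tN}$ (Proposition \ref{L2invertible}) and on the $\Gamma$-invariant Calderon projector (Proposition \ref{Calderon}). Since both the Sobolev scale on $\thM$, $\tM$, $\tN$ (as developed in \cite{Ramachandran}) and the pseudodifferential calculus involved are $\Gamma$-equivariant and the relevant symbolic computations are purely local near the boundary, the argument of [BW] transplants with essentially no change in structure.

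\textbf{Step 1 (Reduction to the kernel case via the Poisson operator).} First, using the collar decomposition $\thD^+ = \tG(\partial_u + \tD)$ near $\tM$ and the $L^2$-invertibility of $D_{\tN}$, construct the $\Gamma$-invariant Poisson operator $\tK^+ : C^\infty(\tM,\thS^+|_{\tM}) \to C^\infty(\thM,\thS^+)$ in the usual way (extend by zero across $\tM$ into $\tN$, apply $D_{\tN}^{-1}$, restrict back), so that $\thD^+ \tK^+ = 0$ and $\tilde b_+ \tK^+ = \maC(\thD^+)$. The mapping properties of $\tK^+$ are governed by the $\Gamma$-invariant pseudodifferential calculus: $\tK^+ : W^{r-1/2}(\tM,\thS^+|_{\tM}) \to W^r(\thM,\thS^+)$ is bounded for every $r\in\R$. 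This is the $\Gamma$-equivariant analogue of the key mapping property used in [BW, §13], and it follows from the symbol of $\tK^+$ being a $\Gamma$-invariant Poisson symbol of order $0$ (the proof is purely symbolic, hence identical to the classical one once the $\Gamma$-invariant calculus is in place, cf.\ \cite{XieYu,Antonini}).

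\textbf{Step 2 (Decomposition of $\tilde\sigma$).} Pick an inverse $\tQ$ for $\thD^+$ on $\thM$ with Dirichlet-type boundary conditions, built from $D_{\tN}^{-1}$; this is a $\Gamma$-invariant pseudodifferential operator of order $-1$ on $\thM$ in a neighborhood of the interior, and it satisfies $\thD^+ \tQ = \mathrm{Id}$, $\tilde b_+ \tQ = 0$. Set
\[
\tilde\tau := \tilde\sigma - \tQ(\thD^+\tilde\sigma).
\]
Since $\thD^+\tilde\sigma \in W^s(\thM,\thS^-)$ with $s>-1/2$, one has $\tQ(\thD^+\tilde\sigma) \in W^{s+1}(\thM,\thS^+)$ with $s+1>1/2$, so its boundary trace belongs to $W^{s+1/2}(\tM,\thS^+|_{\tM})$ by the $\Gamma$-equivariant trace theorem in the range covered by the usual Sobolev trace theorem (\cite{Ramachandran}). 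Moreover $\tilde\tau \in W^t(\thM,\thS^+)$ and $\thD^+\tilde\tau = 0$, so it suffices to prove the trace statement under the additional assumption $\thD^+\tilde\sigma = 0$.

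\textbf{Step 3 (Trace of solutions lies in $W^{t-1/2}$).} For $\tilde\sigma \in W^t(\thM,\thS^+) \cap \Ker\thD^+_{\max}$, write, via Green's formula inside the collar and using the product form $\thD^+ = \tG(\partial_u + \tD)$, the $\Gamma$-invariant identity
\[
\tilde\sigma = \tK^+(\tilde b_+ \tilde\sigma) \quad \text{modulo }\Gamma\text{-smoothing.}
\]
Equivalently, $\tilde b_+\tilde\sigma$ lies in the range of the Calderon projection $\tC^+_{t-1/2}$ on $W^{t-1/2}$, and the Poisson operator realizes its inverse up to smoothing. The continuity $\tK^+ : W^{t-1/2}(\tM,\thS^+|_{\tM}) \to W^t(\thM,\thS^+)$ admits a left parametrix of order $0$ which recovers $\tilde b_+\tilde\sigma$ from $\tilde\sigma$ with a loss of precisely $1/2$ in the Sobolev exponent; this is the usual duality between $\tK^+$ and the boundary restriction implemented by Green's formula against the Poisson operator of $\thD^-$. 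Applying this parametrix gives $\tilde b_+\tilde\sigma \in W^{t-1/2}(\tM,\thS^+|_{\tM})$.

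\textbf{Main obstacle.} The delicate point is Step 3: verifying that the Poisson operator $\tK^+$ admits a $\Gamma$-invariant left parametrix of order $0$ mapping $W^t(\thM,\thS^+)\to W^{t-1/2}(\tM,\thS^+|_{\tM})$, so that the trace on harmonic sections is controlled even when $t\leq 1/2$ and the usual trace theorem does not apply. This is the semi-finite analogue of the symbolic analysis carried out in [BW, Chapter 13], and it requires working with the uniformly supported $\Gamma$-invariant calculus on $\thM$ and on its boundary $\tM$, together with the $L^2$-invertibility of the double from Proposition \ref{L2invertible}. Once this is in place, the $\Gamma$-equivariance of every operator involved ensures that the resulting Sobolev estimates pass to the cover, which completes the proof.
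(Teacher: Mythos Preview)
Your proposal is correct and follows essentially the same route as the paper: the paper's proof consists of the single instruction to repeat the argument of \cite{BW}, Theorem 13.8, inside the $\Gamma$-equivariant pseudodifferential calculus, noting that it hinges on the Calderon projector of Proposition \ref{Calderon}; your outline is precisely an unpacking of that argument, with the Poisson operator, the reduction to $\Ker\thD^+_{\max}$, and the symbolic parametrix for $\tK^+$ all in their expected places. One small comment: in Step 2 you invoke both $\tilde b_+\tQ=0$ and the ordinary trace theorem for $\tQ(\thD^+\tilde\sigma)$; the former already suffices (and makes the trace of $\tilde\sigma$ equal to that of $\tilde\tau$), so the latter is superfluous, and you should also note that the reduction as written gives $\tilde\tau\in W^{\min(t,s+1)}$, which covers the cases actually used in the paper (where $\tilde\sigma\in L^2\cap\Ker\thD^+_{\max}$, so $s$ may be taken arbitrarily large).
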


\begin{proof}
We again simply repeat the argument in the proof of Theorem 13.8 in \cite{BW} by adapting it to the $\Gamma$-equivariant calculus. Notice that  this argument relies on  the more precise construction of the Calderon projector of Proposition \ref{Calderon}.
\end{proof}

\end{document}